\documentclass[12pt]{article}

\title{Hochschild Cohomology, Modular Tensor Categories, and Mapping Class Groups}

\author{S.~Lentner, S.~Mierach, C.~Schweigert, Y.~Sommerh\"auser}

\date{}

\usepackage{geometry}

\usepackage{amsmath}
\usepackage{amsfonts}
\usepackage{amsthm}
\usepackage{amssymb}
\usepackage{graphicx}
\usepackage{extarrows}
\usepackage{enumitem}
\usepackage{bbm}

\usepackage{mathtools}
\usepackage[all]{xy}
\usepackage{ifthen}
\usepackage{tikz}
\usepackage{tikz-cd}
\usepackage{tikzscale}
\usetikzlibrary{shapes.geometric}
\usetikzlibrary{decorations, decorations.markings} 
\usetikzlibrary{arrows, arrows.meta}
\usetikzlibrary{matrix, arrows, decorations.pathmorphing}
\usetikzlibrary{calc}

\tikzset{ ->-/.style args={#1 #2 #3}{
decoration={markings,mark= at position 0.5 with {\arrow{stealth}}, }, 
postaction={decorate}, }, 
->-/.default= {0.5 6pt black }}

\tikzset{ -<-/.style args={#1 #2 #3}{
decoration={markings,mark= at position 0.5 with {\arrow[>=stealth]{<}}, }, 
postaction={decorate}, }, 
-<-/.default= {0.5 6pt black }}

\tikzset{pics/.cd,
handle/.style={code={
\draw[fill=gray!10]  (-2,0) coordinate (-left) 
to [out=260, in=60] (-3,-2) 
to [out=240, in=110] (-3,-4) coordinate (-k)
to [out=290,in=180] (0,-6) coordinate (-num)
to [out=0,in=250] (3,-4) 
to [out=70,in=300] (3,-2) 
to [out=120,in=280] (2,0)  coordinate (-right);
\pgfgettransformentries{\tmpa}{\tmpb}{\tmp}{\tmp}{\tmp}{\tmp}
\pgfmathsetmacro{\myrot}{-atan2(\tmpb,\tmpa)}
\draw[rotate around={\myrot:(0,-2.5)}] (-1.2,-2.4) to[bend right]  (1.2,-2.4);
\draw[fill=white,rotate around={\myrot:(0,-2.5)}] (-1,-2.5) to[bend right]coordinate[pos=0.5] (-B) coordinate[pos=1] (-D) (1,-2.5) 
to[bend right] coordinate [pos=1] (-E) coordinate[pos=0.7] (-A) coordinate[pos=0.4] (-C) (-1,-2.5);
}}}

\tikzset{pics/.cd,
rightpart/.style={code={
\fill[gray!10]  
(-0.4,-4.5) coordinate(-a)
to [out=150,in=20] (-1.1,-4.5)
to [out=200,in=140](-2.2,-5.5) to [out=320,in=185] (0,-6.2)
to [out=5,in=250] (3,-4) 
to [out=70,in=300] (3,-2) 
to [out=120,in=280] (2,0)  coordinate (-right);
\draw[fill=gray!10]  
(-2,0) coordinate (-left) 
to [out=260, in=60] (-3,-2) 
to [out=240, in=110] (-3,-4) coordinate(-k);
\draw[violet]  
(-0.4,-4.5) coordinate(-a)
to [out=150,in=20] (-1.1,-4.5)
to [out=200,in=140] (-2.2,-5.5) coordinate(-b);
\draw[]
(-2.2,-5.5) to [out=320,in=185] (0,-6.2)
to [out=5,in=250] (3,-4) 
to [out=70,in=300] (3,-2) 
to [out=120,in=280] (2,0)  coordinate (-right);

}}}

\makeatletter
\def\enumfix{%
\if@inlabel
 \noindent \par\nobreak\vskip-\topsep\hrule\@height\z@
\fi}
\let\oldenumerate\enumerate
\def\enumerate{\enumfix\oldenumerate}
\makeatother


\newlist{parlist}{enumerate}{1}
\setlist[parlist]{label=(\arabic*),wide=0pt,topsep=0pt}

\theoremstyle{definition}
\newtheorem{Satz}{Satz}[section]
\newtheorem{Lemma}[Satz]{Lemma}
\newtheorem{Proposition}[Satz]{Proposition}
\newtheorem{Cor}[Satz]{Corollary}
\newtheorem{Theorem}[Satz]{Theorem}
\newtheorem{Definition}[Satz]{Definition}

\newtheoremstyle{break}{}{}{}{}{\bfseries}{.}{\newline}{}

\theoremstyle{break}

\makeatletter
\renewenvironment{proof}[1][\proofname]{\par
\vspace{-5pt}
\pushQED{\qed}%
\normalfont
\topsep0pt \partopsep0pt 
\trivlist
\item[\hskip\labelsep
\bfseries
#1\@addpunct{.}]\ignorespaces
}{%
\popQED\endtrivlist\@endpefalse
\addvspace{6pt plus 6pt} 
}
\makeatother

\newcommand{\Z}{\mathbb{Z}}  
 
\newcommand{\R}{\mathbb{R}}  
\newcommand{\C}{\mathbb{C}} 
\newcommand{\K}{K}

\newcommand{\CC}{\mathcal{C}}   
   
\newcommand{\V}{\mathcal{V}}   
\newcommand{\DD}{\mathcal{D}}

\DeclareMathOperator\Ker{Ker}
\DeclareMathOperator\Hom{Hom}
\DeclareMathOperator\End{End}
\DeclareMathOperator\Img{Im}
\DeclareMathOperator\Aut{Aut}
\DeclareMathOperator\Out{Out}
\DeclareMathOperator\ad{ad}

\DeclareMathOperator\id{id}

\DeclareMathOperator\op{op}

\DeclareMathOperator\ev{ev}
\DeclareMathOperator\coev{coev}

\DeclareMathOperator\Diffeo{Diffeo}

\DeclareMathOperator\Rex{Rex}

\newcommand{\GL}{\mathrm{GL}}
\newcommand{\SL}{\mathrm{SL}}
\newcommand{\Sp}{\mathrm{Sp}}
\newcommand{\PGL}{\mathrm{PGL}}

\newcommand{\fS}{\mathfrak{S}}
\newcommand{\fT}{\mathfrak{T}}

\newcommand{\fN}{\mathfrak{N}}

\newcommand{\fr}{\mathfrak{r}}
\newcommand{\fs}{\mathfrak{s}}
\newcommand{\ft}{\mathfrak{t}}
\newcommand{\fn}{\mathfrak{n}}
\newcommand{\fd}{\mathfrak{d}}
\newcommand{\fq}{\mathfrak{q}}
\newcommand{\fb}{\mathfrak{b}}
\newcommand{\fz}{\mathfrak{z}}
\newcommand{\hS}{\hat{\mathfrak{S}}}
\newcommand{\hT}{\hat{\mathfrak{T}}}
\newcommand{\Pu}{\mathrm{P}}

\newcommand{\Ext}{\mathrm{Ext}}

\renewcommand{\1}{\mathbbm{1}}
\newcommand{\TFT}{\mathcal{Z}}
 
\newcommand{\ot}{\mathbin{\otimes}}
\newcommand\deq{\vcentcolon =}

\parindent0em
\parskip2ex plus0.5ex minus 0.5ex


\begin{document}

\begin{flushright}
{\sf ZMP-HH/20-5}\\
{\sf Hamburger$\;$Beitr\"age$\;$zur$\;$Mathematik$\;$Nr.$\;$825}
\end{flushright}
\vskip 11mm

\begin{center}{\bf \Large 
Hochschild Cohomology, Modular Tensor Categories, and Mapping Class Groups}

\large{\bf I.~General Theory}

\vskip 8mm

{\large Simon Lentner, Svea Nora Mierach,\\  \vskip 1mm Christoph Schweigert, Yorck Sommerh\"auser}
\vskip 9mm
\end{center}

\vspace{15mm}

\begin{abstract}
\noindent 
Given a finite modular tensor category, we associate with each compact surface with boundary a cochain complex in such a way that the mapping class group of the surface acts projectively on its cohomology groups. In degree zero, this action coincides with the known projective action of the mapping class group on the space of chiral conformal blocks. In the case that the surface is a torus and the category is the representation category of a factorizable ribbon Hopf algebra, we recover our previous result on the projective action of the modular group on the Hochschild cohomology groups of the Hopf algebra.
\end{abstract}

\setcounter{tocdepth}{2}
\newpage
\tableofcontents

\newpage
\section*{Introduction}
\addcontentsline{toc}{section}{Introduction}
It is by now well-understood that a semisimple modular tensor category gives rise to a topological field theory. In particular, such a topological field theory assigns finite-dimensional vector spaces to surfaces, possibly with boundary. These vector spaces are constructed from the homomorphism spaces of the category and are called the spaces of chiral conformal blocks, or briefly the block spaces. They depend functorially on the objects of the category assigned to the boundary components, so that it is more appropriate to say that a topological field theory assigns to a surface not only a space, but rather a functor. To diffeomorphisms of surfaces, the topological field theory assigns natural transformations between these functors, which gives rise to projective representations of mapping class groups. These data obey factorization constraints related to the cutting and gluing of surfaces, properties formalized in the notion of a modular functor.

Already early in the development of the theory, it was noted that such projective representations can be obtained even if the modular category is not semisimple. In this case, the arising functors are no longer exact, but they are still left exact, so that it is natural to study their derived functors. The main result of the present work is that the cohomology groups arising from these derived functors still carry a projective action of the mapping class group in such a way that the original action is recovered in degree zero. It is therefore appropriate to call these spaces derived block spaces.

This result can be applied to a special case: For the category, one can use the representation category of a factorizable ribbon Hopf algebra, and for the surface, one can use the torus. In this case, the mapping class group is isomorphic to the modular group, and the cohomology groups become the Hochschild cohomology groups of the Hopf algebra. In this way, we recover the main result of our previous article~\cite{LMSS1}.

The approach used to reach our present result is inspired by the principle of `propagation of vacua' (cf.~\cite[Par.~2.2, p.~476]{TUY}). This technique introduces an additional boundary component on the surface that, if labeled with the monoidal unit of the category, leads to block spaces that are canonically isomorphic to the block spaces without the additional boundary component. In our construction, the new boundary component serves as the position where we insert a projective resolution of the unit object. By the functoriality of the block spaces, we then obtain a cochain complex on which the mapping class group of the surface with one additional boundary component acts. The main point of the argument will be that this additional boundary component can be closed again when passing to cohomology. To establish this, we proceed in two steps, using standard techniques from the theory of mapping class groups, namely the capping sequence in the first step and the Birman sequence in the second step. 

The present work sets out the general theory needed to establish the result just described, but also prepares the ground for the computation of explicit examples of these mapping class group representations, which will be addressed in our forthcoming article~\cite{LMSS2}. A new aspect of our approach is that we consider an action of the mapping class group on the fundamental group by requiring that the base point of the fundamental group be kept fixed. In this way, it is possible to avoid the necessity to identify homomorphisms that are related by simultaneous conjugation that can be found in many of the other articles on topological or conformal field theory. 

The material is organized as follows: Section~\ref{Sec:MapClGr} reviews surfaces, fundamental groups, and mapping class groups as well as the capping sequence and the Birman sequence. It also introduces the action of the mapping class group on the fundamental group just mentioned.
Section~\ref{Sec:TensCat} explains how representations of mapping class groups are assigned to certain tensor categories in topological field theory. For this, we use the framework created by V.~Lyubashenko in his articles~\cite{L1} and~\cite{L2}; in particular, we use the approach to surfaces via nets and ribbon graphs described in his articles. In Section~\ref{Sec:DerFunct}, we first state and prove our main result and then explain why this result generalizes our previous one from~\cite{LMSS1}. In fact, our present result was already mentioned in~\cite{LMSS1}, and it was also described in~\cite{FS2}. Here, we are now supplying proofs for our claims. 

Throughout the text, the word `projective' is used frequently. It has two very different meanings: When speaking about projective modules and projective resolutions, the term is used in the sense of ring theory and homological algebra. When speaking about projective space and projective representations or actions, the term is used in the sense of projective geometry. In particular, for a vector space~$V$, we denote the associated projective space, i.e., the set of its one-dimensional subspaces, by~$P(V)$, and the projectivity or homography induced by a bijective linear map~$f$ by~$P(f)$. The set of all projectivities from~$P(V)$ to itself forms the projective linear group~$\PGL(V)$, which is isomorphic to the general linear group~$\GL(V)$
modulo the scalar multiples of the identity transformation. By a projective representation or projective action, we mean a group homomorphism from a given group to the projective linear group.

\enlargethispage{-2.5mm}

We will assume throughout that our base field~$\K$ is algebraically closed. In general, we compose mappings and morphisms in a category from right to left, i.e., we have
$(g \circ f)(x) = g(f(x))$, so that~$f$ is applied first. In contrast, we concatenate paths in the fundamental group from left to right, i.e., in the concatenation~$\kappa \gamma$, the path~$\kappa$ is traced out first, while the path~$\gamma$ is traced out afterwards.

We use the symbol~$\V$ for the category of finite-dimensional vector spaces and the symbol~$S_n$ for the symmetric group in $n$~letters. Additional notation will be explained in the text. 

While carrying out this research, the first and the third author were partially supported by the RTG 1670 `Mathematics inspired by String Theory and Quantum Field Theory' and by the `Deutsche Forschungsgemeinschaft' under Germany's Excellence Strategy EXC 2121 `Quantum Universe' -- 390833306, while the second and the fourth author were partially supported by NSERC grant RGPIN-2017-06543.

\section{Mapping class groups} \label{Sec:MapClGr}
\subsection{The classification of surfaces} \label{Surf}
The principal result of the classification of surfaces asserts that every compact, connected, orientable, smooth 2-dimensional manifold is diffeomorphic to the connected sum~$\Sigma_g$ of a sphere with $g \ge 0$ tori, where it is understood that a diffeomorphism is arbitrarily often differentiable, and smoothness is understood in the same way. The number~$g$ of the attached tori, which are often called handles, is uniquely determined by the surface and is called its genus. If we include manifolds with boundary, then the classification theorem asserts that a compact, connected, orientable, smooth \mbox{2-dimensional} manifold with boundary is diffeomorphic to~$\Sigma_g$ with~$n \ge 0$ open disks removed, where the disks have to satisfy the requirement that their closures do not intersect. We will denote this surface by $\Sigma_{g,n}$, so that 
$\Sigma_g = \Sigma_{g,0}$. If the boundary is not empty, it is the finite disjoint union of~$n$ connected components, called the boundary components. We assume that on each of the boundary components, a point has been distinguished, or marked, as we also say. The Euler characteristic\nopagebreak~of~$\Sigma_{g,n}$ is
\[\chi(\Sigma_{g,n}) = 2 - 2g - n.\]

\pagebreak

By assumption, the surfaces above are orientable, but when we refer to~$\Sigma_{g,n}$, we will always assume that an orientation has been chosen. In the pictures of~$\Sigma_{g,n}$ that we will encounter below, $\Sigma_{g,n}$ will be drawn embedded into 3-dimensional space, in which case we will assume that the orientation is represented by the outward-pointing normal vector field. 

In the standard proof of the classification theorem, the surface is realized not as a connected sum, but rather as a quotient space of a polygon whose edges are labeled in a certain normal form. In the normal form for the closed surface~$\Sigma_{g}$, the edges are labeled consecutively as
\[\alpha_1, \beta_1, \alpha_{1}^{-1}, \beta_1^{-1}, \alpha_2, \beta_2, \alpha_2^{-1}, \beta_2^{-1}, \ldots, \alpha_g, \beta_g, \alpha_{g}^{-1}, \beta_g^{-1},\]
while for the surface~$\Sigma_{g,n}$ with boundary, the edges are labeled consecutively as
\[\alpha_1, \beta_1, \alpha_{1}^{-1}, \beta_1^{-1}, \ldots, \alpha_g, \beta_g, \alpha_{g}^{-1}, \beta_g^{-1}, 
\xi_1, \rho_1, \xi_1^{-1}, \ldots, \xi_n, \rho_n, \xi_n^{-1}.\]
In the case $g=n=3$, this polygon has the form
\begin{center}
\fontsize{10}{12}\selectfont
\begin{tikzpicture}
\node[fill=gray!10] (pol) [
  minimum size=200pt,
  regular polygon, regular polygon sides=21,
  rotate=270,
  ]{};
\foreach \x/\y/\i in {1/2/1,5/6/2,9/10/3} 
  \draw[black!10!red,auto=right, ->-]
    (pol.corner \x)--(pol.corner \y)
      node[black!10!red,midway]{$\alpha_{\i}$};
\foreach \x/\y/\i in {3/4/1,7/8/2,11/12/3} 
   \draw[black!10!red,auto=right, -<-]
     (pol.corner \x)--(pol.corner \y)
     node[black!10!red,midway]{$\alpha_{\i}$};
\foreach \x/\y/\i in {2/3/1,6/7/2,10/11/3} 
  \draw[black!10!blue,auto=right, ->-]
    (pol.corner \x)--(pol.corner \y)
      node[black!10!blue,midway]{$\beta_{\i}$};
\foreach \x/\y/\i in {4/5/1,8/9/2,12/13/3}
   \draw[black!10!blue,auto=right, -<-]
     (pol.corner \x)--(pol.corner \y)
     node[black!10!blue,midway]{$\beta_{\i}$};
\foreach \x/\y/\i in {13/14/1, 16/17/2,19/20/3} 
  \draw[black!40!green,auto=right,->-]
    (pol.corner \x)--(pol.corner \y)
      node[black!40!green,midway]{$\xi_ {\i}$};
\foreach \x/\y/\i in {15/16/1,18/19/2,21/1/3} 
   \draw[black!40!green,auto=right,-<-]
     (pol.corner \x)--(pol.corner \y)
     node[black!40!green,midway]{$\xi_{\i}$};
\foreach \x/\y/\i in {14/15/1,17/18/2,20/21/3} 
  \draw[violet,auto=right,->-]
    (pol.corner \x)--(pol.corner \y)
      node[violet,midway]{$\rho_{\i}$};
\end{tikzpicture}
\end{center}
where the inverse signs have been depicted by reversing the orientation of the corresponding arrow.

In the case of a closed surface, the quotient space is then realized in such a way that all the vertices of the polygon become a single point~$x$, while the edges labeled $\alpha_i$ and $\beta_i$ are, respectively, identified with their counterparts labeled~$\alpha_i^{-1}$ and~$\beta_i^{-1}$, which are traced out in the opposite direction, as we use the general convention that $\gamma^{-1}(t) \deq \gamma(1-t)$ for a path defined on the unit interval. In the case of the surface~$\Sigma_{g,n}$ with boundary, in addition an edge labeled $\xi_j$ is identified in the same way with its counterpart labeled~$\xi_j^{-1}$. The edges labeled~$\rho_j$ then become closed curves in the quotient that represent the~$n$~boundary components (cf.~\cite[Kap.~6, \S~40, p.~142ff]{ST}, \cite[Chap.~I, Sec.~10, p.~37ff]{Mas}). In this case, not all vertices of the polygon map to the same point~$x$, but only those that are not the start point or the end point of one of the edges~$\rho_1,\ldots,\rho_n$. The start point of~$\rho_j$ is instead identified with the end point of~$\rho_j$ and in this way yields the marked point on the $j$th boundary component.

If we carry out only the second identification in the case depicted above, i.e., the identification of~$\xi_j$ with~$\xi_j^{-1}$, we obtain the following picture of the intermediate stage:
\vspace{0.3cm}
\begin{center}
\fontsize{10}{12}\selectfont
\begin{tikzpicture}[scale=0.6]
\node[fill=gray!10] (pol) [ 
minimum size=200pt, 
regular polygon, regular polygon sides=12, rotate=195 ]{};
\foreach \x/\y/\i in {1/2/1,5/6/2,9/10/3} 
\draw[black!10!red,auto=right,->-]
(pol.corner \x)--(pol.corner \y)
node[black!10!red,midway]{$\alpha_{\i}$};
\foreach \x/\y/\i in {3/4/1,7/8/2,11/12/3} 
\draw[black!10!red,auto=right,-<-]
(pol.corner \x)--(pol.corner \y)
node[black!10!red,midway]{$\alpha_{\i}$};
\foreach \x/\y/\i in {2/3/1,6/7/2,10/11/3} 
\draw[black!10!blue,auto=right,->-]
(pol.corner \x)--(pol.corner \y)
node[black!10!blue,midway]{$\beta_ {\i}$};
\foreach \x/\y/\i in {4/5/1,8/9/2,12/1/3} 
\draw[black!10!blue,auto=right,-<-]
(pol.corner \x)--(pol.corner \y)
node[black!10!blue,midway]{$\beta_{\i}$};

\draw[black!40!green,decoration={ markings, mark=at position 0.5 with {\arrow{stealth}}}, postaction={decorate}]  
(pol.corner 1) --(-2.5,-20pt) node[midway, left] {$\xi_1$} ;
\draw[black!40!green,decoration={ markings, mark=at position 0.5 with {\arrow{stealth}}}, postaction={decorate}]  
(pol.corner 1) -- (0,-20pt) node[midway, left] {$\xi_2$};
\draw[black!40!green,decoration={ markings, mark=at position 0.5 with {\arrow{stealth}}}, postaction={decorate}] 
(pol.corner 1) -- (2.5,-20pt) node[midway, left] {$\xi_3$};

\draw[violet,decoration={markings, mark=at position 0.26 with {\arrow[>=stealth]{<}}},
postaction={decorate},fill=white] (-2.5,0) circle (20pt)  ;
\node[violet,above] at (-2.5,20pt) {$\rho_1$};
\draw[violet,decoration={markings, mark=at position 0.26 with {\arrow[>=stealth]{<}}},
postaction={decorate},fill=white] (0,0) circle (20pt);
\node[violet,above] at (0,20pt) {$\rho_2$};
\draw[violet,decoration={markings, mark=at position 0.26 with {\arrow[>=stealth]{<}}},
postaction={decorate},fill=white] (2.5,0) circle (20pt);
\node[violet,above] at (2.5,20pt) {$\rho_3$};
\end{tikzpicture}
\end{center}
\vspace{0.4cm}
When we carry out all the identifications, the edges labeled $\alpha_i$ and $\beta_i$ become in the quotient the closed curves that appear in the following picture:
\begin{center}
\fontsize{10}{12}\selectfont
\begin{tikzpicture}[scale=0.6]
\pic[scale=0.6] (lower) at (0,-pi) {handle}; 
\pic[rotate=120,scale=0.6] (tr) at (30:pi) {handle};
\pic[rotate=-120,scale=0.6] (tl) at (150:pi) {handle};
\fill[gray!10]  (lower-right) to[out=100,in=200] (tr-left)-- 
(tr-right) to[out=-140,in=-40] (tl-left)
-- (tl-right) to[out=-20,in=80] (lower-left) -- cycle;
\draw (lower-right) to[out=100,in=200] (tr-left);
\draw (tr-right) to[out=-140,in=-40] (tl-left);
\draw (tl-right) to[out=-20,in=80] (lower-left);

\node[above] (v1) at (0,0) {$x$}; 
\node[below] at (lower-num) {1}; 
\node[above right] at (tr-num) {2}; 
\node[above left] at (tl-num) {3}; 

\draw[black!10!red] 
(0,0) to[out=260,in=80] (-0.8,-4.5) to[out=260,in=150] (lower-A);
\draw[black!10!red, dash pattern= on 2pt off 2pt] (-2.73,-4.7) to[bend right](lower-A);
\draw[black!10!red,decoration={ markings, mark=at position 0.6 with {\arrow{stealth}}}, postaction={decorate}]
(0,0) to[out=245,in=70] node[near end,below left]{$\alpha_1$} (-1.37,-3.5)  to[out=250,in=30] (-2.73,-4.7);

\draw[rotate=120,black!10!red] 
(0,0) to[out=260,in=80] (-0.8,-4.5) to[out=260,in=150](tr-B);
\draw[rotate=120,black!10!red, dash pattern= on 2pt off 2pt] (-2.73,-4.7) to[bend right](tr-B);
\draw[rotate=120,black!10!red,decoration={ markings, mark=at position 0.6 with {\arrow{stealth}}}, postaction={decorate}]
(0,0) to[out=245,in=70] node[very near end, below]{$\alpha_2$}(-1.37,-3.5)  to[out=250,in=30] (-2.73,-4.7);

\draw[rotate=240,black!10!red] 
(0,0) to[out=260,in=80] (-0.8,-4.5) to[out=260,in=150] (tl-C);
\draw[rotate=240,black!10!red, dash pattern= on 2pt off 2pt] (-2.73,-4.7) to[bend right] (tl-C);
\draw[rotate=240,black!10!red,decoration={ markings, mark=at position 0.5 with {\arrow{stealth}}}, postaction={decorate}]
(0,0) to[out=245,in=70] node[near end, right]{$\alpha_3$} (-1.37,-3.5)  to[out=250,in=30] (-2.73,-4.7);

\draw[black!10!blue,decoration={ markings, mark=at position 0.63 with {\arrow[>=stealth]{<}}}, postaction={decorate}] 
(0,0) to[out=250,in=80] (-1.7,-6.4) to[out=260,in=150] (-1,-7.7) to[out=330,in=178] (0,-8) 
to[out=2,in=210] (1,-7.7) to[out=30,in=280] 
(1.7,-6.4) node[right] {$\beta_1$}to[out=100,in=290] (0,0) ;

\draw[rotate=120,black!10!blue,decoration={ markings, mark=at position 0.63 with {\arrow[>=stealth]{<}}}, postaction={decorate}] 
(0,0) to[out=250,in=80] (-1.7,-6.4) to[out=260,in=150] (-1,-7.7) to[out=330,in=178] (0,-8) 
to[out=2,in=210] (1,-7.7) to[out=30,in=280] (1.7,-6.4)node[above] {$\beta_2$} to[out=100,in=290] (0,0) ;

\draw[rotate=240,black!10!blue,decoration={ markings, mark=at position 0.63 with {\arrow[>=stealth]{<}}}, postaction={decorate}] 
(0,0) to[out=250,in=80] (-1.7,-6.4) to[out=260,in=150] (-1,-7.7) to[out=330,in=178] (0,-8) 
to[out=2,in=210] (1,-7.7) to[out=30,in=280] (1.7,-6.4) node[below left] {$\beta_3$}to[out=100,in=290] (0,0) ;

\draw[violet,rotate=-60,decoration={markings, mark=at position 0.6 with {\arrow[>=stealth]{>}}}, 
postaction={decorate},fill=lightgray] (-0.85,-2.25)  circle (10pt) node {$\rho_1$} ;
\draw[violet,rotate=-60,decoration={markings, mark=at position 0.6 with {\arrow[>=stealth]{>}}},
postaction={decorate},fill=lightgray] (0.075,-2.25) circle (10pt) node {$\rho_2$} ;
\draw[violet,rotate=-60,decoration={markings, mark=at position 0.6 with {\arrow[>=stealth]{>}}},
postaction={decorate},fill=lightgray] (1,-2.25) circle (10pt) node {$\rho_3$} ;

\draw[black!40!green,rotate=-60,decoration={ markings, mark=at position 0.7 with {\arrow{stealth}}}, postaction={decorate}]  
(0,0) -- (-0.75,-1.91) ;
\draw[black!40!green,rotate=-60,decoration={ markings, mark=at position 0.7 with {\arrow{stealth}}}, postaction={decorate}]  
(0,0)  -- (0.03,-1.89) ;
\draw[black!40!green,rotate=-60,decoration={ markings, mark=at position 0.7 with {\arrow{stealth}}}, postaction={decorate}] 
(0,0) -- (0.85,-1.93) ;
\node[black!40!green,above] at (-1.8,-0.46) {$\xi_1$}; 
\node[black!40!green,above] at (-1.55,-1.04) {$\xi_2$};
\node[black!40!green,above] at (-1.27,-1.73) {$\xi_3$};
\end{tikzpicture}
\end{center}
\vspace{0.4cm}
This realization of the surface is called the polygon model of the surface, as opposed to the connected sum model that we briefly described at the beginning of the paragraph. Note that, in order to obtain the outward-pointing orientation of the surface mentioned above, the surface of the polygon needs to be oriented by a normal vector field that is orthogonal to the page and is pointing away from the reader, which is not the standard orientation of the plane used elsewhere in the text, for example in Paragraph~\ref{FundGroup}. Furthermore, in this model the boundary curves~$\rho_j$ do not carry the orientation that is induced by the surface on its boundary, but rather the opposite orientation.

\subsection{The fundamental group} \label{FundGroupGen}
In the polygon model of a closed surface, we use the point~$x$ in the quotient that all vertices of the polygon map to as the base point of the fundamental group $\pi_1(\Sigma_{g},x)$. The elements of the fundamental group are the homotopy classes~$[\gamma]$ of closed paths~$\gamma$ defined on the unit interval, where homotopy is relative to this base point~$x$. It turns out that the relative homotopy classes of the~$2g$ elements $\alpha_1,\beta_1,\ldots,\alpha_g,\beta_g$ just defined generate the fundamental group.  In the connected sum model, each pair $\alpha_i,\beta_i$ lies on one of the~$g$ tori that we attached to a sphere at the beginning of Paragraph~\ref{Surf}. Two neighboring tori are connected by the path 
$\mu_{i} \deq \alpha_{i+1}^{-1} \beta_i \alpha_i \beta_i^{-1}$, where it is understood that
$\mu_{g} \deq \alpha_{1}^{-1} \beta_g \alpha_g \beta_g^{-1}$ if~$i=g$. To see this, we first observe that 
the path $\beta_i \alpha_i \beta_i^{-1}$ is homotopic to a kind of mirror image of~$\alpha_i$:
\vspace{0.5cm}
\begin{center}
\begin{minipage}[t]{7cm}
\fontsize{10}{12}\selectfont
\begin{tikzpicture}[scale=0.35]
\pic[scale=0.35] (lower) at (0,-pi) {handle}; 
\pic[rotate=82.5,scale=0.35] (lr) at (-7.5:pi) {handle};
\pic[rotate=147.5,scale=0.35] (tr) at (57.5:pi) {handle};
\pic[rotate=212.5,scale=0.35] (tl) at (122.5:pi) {handle};
\pic[rotate=295,scale=0.35] (ll) at (205:pi) {handle};
\fill[gray!10]  (lower-right) to[out=100,in=170] (lr-left)-- 
(lr-right) to (tr-left)
-- (tr-right) to (tl-left)
-- (tl-right) to[out=310,in=10] (ll-left) 
--(ll-right) to (lower-left) -- cycle;
\draw[dotted] (lower-right) to[out=100,in=170] (lr-left);
\draw[dotted] (tl-right) to[out=310,in=10] (ll-left);

\node[rotate=68,scale=1.5] at (-6,1.8) {\dots};
\node[rotate=45,scale=1.5] at (4.2,-4.5) {\dots};

\node[below] (v1) at (0,0) {$x$}; 
\node[below] at (lower-num) {1}; 
\node[right] at (lr-num) {$i-1$}; 
\node[above right] at (tr-num) {$i$}; 
\node[above left] at (tl-num) {$i+1$}; 
\node[below left] at (ll-num) {$g$}; 

\draw[rotate=147.5,black!10!red] 
(0,0) to[out=260,in=80] (-0.8,-4.5) to[out=260,in=150] (tr-B);
\draw[rotate=147.5,black!10!red, dash pattern= on 2pt off 2pt] (-2.73,-4.7) to[bend right] (tr-B);
\draw[rotate=147.5,black!10!red,decoration={ markings, mark=at position 0.5 with {\arrow{stealth}}}, postaction={decorate}]
(0,0) to[out=245,in=70] node[pos=0.6, right]{$\alpha_i$} (-1.37,-3.5)  to[out=250,in=30] (-2.73,-4.7);

\draw[rotate=147.5,black!10!blue,decoration={ markings, mark=at position 0.61 with {\arrow[>=stealth]{<}}}, postaction={decorate}] 
(0,0) to[out=250,in=80] (-1.7,-6.4) to[out=260,in=150] (-1,-7.7) to[out=330,in=178] (0,-8) 
to[out=2,in=210] (1,-7.7) to[out=30,in=280] node[near end, above] {$\beta_i$} (1.7,-6.4) to[out=100,in=290] (0,0) ;

\draw[rotate=147.5,black!10!blue,decoration={ markings, mark=at position 0.8 with {\arrow[>=stealth]{>}}}, postaction={decorate}] 
(0,0) to[out=255,in=87] 
(-1.4,-6.4) to[out=273,in=135] (-1,-7.1) to[out=320,in=178] (0,-7.5) 
to[out=2,in=220] (1,-7.1) to[out=45,in=270] 
(1.4,-6.4) to[out=93,in=285] node[midway,right] {$\beta_i^{-1}$}(0,0) ;
\end{tikzpicture}
\end{minipage}
\begin{minipage}[t]{7cm}
\fontsize{10}{12}\selectfont
\begin{tikzpicture}[scale=0.35]
\pic[scale=0.35] (lower) at (0,-pi) {handle}; 
\pic[rotate=82.5,scale=0.35] (lr) at (-7.5:pi) {handle};
\pic[rotate=147.5,scale=0.35] (tr) at (57.5:pi) {handle};
\pic[rotate=212.5,scale=0.35] (tl) at (122.5:pi) {handle};
\pic[rotate=295,scale=0.35] (ll) at (205:pi) {handle};
\fill[gray!10]  (lower-right) to[out=100,in=170] (lr-left)-- 
(lr-right) to (tr-left)
-- (tr-right) to (tl-left)
 -- (tl-right) to[out=310,in=10] (ll-left) 
  --(ll-right) to (lower-left) -- cycle;
\draw[dotted] (lower-right) to[out=100,in=170] (lr-left);
\draw[dotted] (tl-right) to[out=310,in=10] (ll-left);

\node[rotate=68,scale=1.5] at (-6,1.8) {\dots};
\node[rotate=45,scale=1.5] at (4.2,-4.5) {\dots};

\node[below] (v1) at (0,0) {$x$}; 
\node[below] at (lower-num) {1}; 
\node[right] at (lr-num) {$i-1$}; 
\node[above right] at (tr-num) {$i$}; 
\node[above left] at (tl-num) {$i+1$}; 
\node[below left] at (ll-num) {$g$}; 

\draw[rotate=147.5,black!10!red,decoration={ markings, mark=at position 0.5 with {\arrow[>=stealth]{<}}}, postaction={decorate}] 
(0,0) to[out=260,in=80] 
(-0.8,-4.5) to[out=260,in=150] (tr-B);
\draw[rotate=147.5,black!10!red, dash pattern= on 2pt off 2pt] (-3.22,-5.7) to[bend left] (tr-B);
\draw[rotate=147.5,black!10!red] (-3.22,-5.7) to[out=320,in=165] (-2.5,-6) to[out=345,in=130] (-1.9,-6.4);

\draw[rotate=147.5,black!10!blue,decoration={ markings, mark=at position 0.7 with {\arrow[>=stealth]{<}}}, postaction={decorate}] 
(-1.9,-6.4) to[out=310,in=150] 
(-1,-7.7) to[out=330,in=178] (0,-8) 
to[out=2,in=210] (1,-7.7) to[out=30,in=280] 
(1.7,-6.4) to[out=100,in=290] (0,0) ;

\draw[rotate=147.5,black!10!blue,decoration={ markings, mark=at position 0.2 with {\arrow[>=stealth]{>}}}, postaction={decorate}] 
(0,0) to[out=255,in=87] 
(-1.4,-6.4) to[out=273,in=135] (-1,-7.1) to[out=320,in=178] (0,-7.5) 
to[out=2,in=220] (1,-7.1) to[out=45,in=270] 
(1.4,-6.4) to[out=93,in=285] (0,0) ;
\end{tikzpicture}
\end{minipage}

\vspace{0.5cm}

\begin{minipage}[t]{7cm}
\fontsize{10}{12}\selectfont
\begin{tikzpicture}[scale=0.35]
\pic[scale=0.35] (lower) at (0,-pi) {handle}; 
\pic[rotate=82.5,scale=0.35] (lr) at (-7.5:pi) {handle};
\pic[rotate=147.5,scale=0.35] (tr) at (57.5:pi) {handle};
\pic[rotate=212.5,scale=0.35] (tl) at (122.5:pi) {handle};
\pic[rotate=295,scale=0.35] (ll) at (205:pi) {handle};
\fill[gray!10]  (lower-right) to[out=100,in=170] (lr-left)-- 
(lr-right) to (tr-left)
-- (tr-right) to (tl-left)
 -- (tl-right) to[out=310,in=10] (ll-left) 
  --(ll-right) to (lower-left) -- cycle;
\draw[dotted] (lower-right) to[out=100,in=170] (lr-left);
\draw[dotted] (tl-right) to[out=310,in=10] (ll-left);

\node[rotate=68,scale=1.5] at (-6,1.8) {\dots};
\node[rotate=45,scale=1.5] at (4.2,-4.5) {\dots};

\node[below] (v1) at (0,0) {$x$}; 
\node[below] at (lower-num) {1}; 
\node[right] at (lr-num) {$i-1$}; 
\node[above right] at (tr-num) {$i$}; 
\node[above left] at (tl-num) {$i+1$}; 
\node[below left] at (ll-num) {$g$};

\draw[rotate=147.5,black!10!red,decoration={ markings, mark=at position 0.5 with {\arrow[>=stealth]{<}}}, postaction={decorate}] 
(-1.1,-4.3) to[out=20,in=100] (-0.8,-4.5) to[out=280,in=150] (tr-B);
\draw[rotate=147.5,black!10!red, dash pattern= on 2pt off 2pt] (-3.22,-5.7) to[bend left](tr-B);
\draw[rotate=147.5,black!10!red] (-3.22,-5.7) to[out=320,in=165] (-2.5,-6) to[out=345,in=130] (-1.9,-6.4);

\draw[rotate=147.5,black!10!blue,decoration={ markings, mark=at position 0.7 with {\arrow[>=stealth]{<}}}, postaction={decorate}] 
(-1.9,-6.4) to[out=310,in=150] 
(-1,-7.7) to[out=330,in=178] (0,-8) 
to[out=3,in=210] (1,-7.7) to[out=30,in=280] 
(1.7,-6.4) 
to[out=100,in=290] (0,0) ;

\draw[rotate=147.5,black!10!blue,decoration={ markings, mark=at position 0.2 with {\arrow[>=stealth]{>}}}, postaction={decorate}] 
(-1.1,-4.3) to[out=200,in=100] (-1.4,-6.4) to[out=280,in=135] (-1,-7.1) to[out=320,in=178] (0,-7.5) 
to[out=3,in=220] (1,-7.1) to[out=45,in=270] 
(1.4,-6.4)
to[out=93,in=285] (0,0) ;
\end{tikzpicture}
\end{minipage}
\begin{minipage}[t]{7cm}
\fontsize{10}{12}\selectfont
\begin{tikzpicture}[scale=0.35]
\pic[scale=0.35] (lower) at (0,-pi) {handle}; 
\pic[rotate=82.5,scale=0.35] (lr) at (-7.5:pi) {handle};
\pic[rotate=147.5,scale=0.35] (tr) at (57.5:pi) {handle};
\pic[rotate=212.5,scale=0.35] (tl) at (122.5:pi) {handle};
\pic[rotate=295,scale=0.35] (ll) at (205:pi) {handle};
\fill[gray!10]  (lower-right) to[out=100,in=170] (lr-left)-- 
(lr-right) to (tr-left)
-- (tr-right) to (tl-left)
-- (tl-right) to[out=310,in=10] (ll-left) 
--(ll-right) to (lower-left) -- cycle;
\draw[dotted] (lower-right) to[out=100,in=170] (lr-left);
\draw[dotted] (tl-right) to[out=310,in=10] (ll-left);

\node[rotate=68,scale=1.5] at (-6,1.8) {\dots};
\node[rotate=45,scale=1.5] at (4.2,-4.5) {\dots};

\node[below] (v1) at (0,0) {$x$}; 
\node[below] at (lower-num) {1}; 
\node[right] at (lr-num) {$i-1$}; 
\node[above right] at (tr-num) {$i$}; 
\node[above left] at (tl-num) {$i+1$}; 
\node[below left] at (ll-num) {$g$};

\scalebox{-1}[1]{
\draw[rotate=212.5,black!10!blue] 
(0,0) to[out=260,in=80] (-1.1,-4.7) ;
\draw[rotate=212.5,black!10!red] (-1.1,-4.7) to[out=260,in=190] (tl-C);
\draw[rotate=212.5,black!10!red, dash pattern = on 2pt off 2pt] (-2.73,-4.7) to[bend right] (tl-C);
\draw[rotate=212.5,black!10!blue,,decoration={ markings, mark=at position 0.65 with {\arrow[>=stealth]{>}}}, postaction={decorate}] (0,0) to[out=245,in=70] (-1.37,-3.5); 
\draw[rotate=212.5,black!10!red] (-1.37,-3.5) to[out=250,in=30] (-2.73,-4.7);}
\end{tikzpicture}
\end{minipage}
\end{center}

\pagebreak
If we now also concatenate with the curve $\alpha_{i+1}^{-1}$, we see that the curve~$\mu_{i}$ is homotopic to a curve that connects the $i$th and $(i+1)$st handle:
\vspace{0.4cm}
\begin{center}
\begin{minipage}[t]{7cm}
\fontsize{10}{12}\selectfont
\begin{tikzpicture}[scale=0.35]
\pic[scale=0.35] (lower) at (0,-pi) {handle}; 
\pic[rotate=82.5,scale=0.35] (lr) at (-7.5:pi) {handle};
\pic[rotate=147.5,scale=0.35] (tr) at (57.5:pi) {handle};
\pic[rotate=212.5,scale=0.35] (tl) at (122.5:pi) {handle};
\pic[rotate=295,scale=0.35] (ll) at (205:pi) {handle};
\fill[gray!10]  (lower-right) to[out=100,in=170] (lr-left)-- 
(lr-right) to (tr-left)
-- (tr-right) to (tl-left)
-- (tl-right) to[out=310,in=10] (ll-left) 
--(ll-right) to (lower-left) -- cycle;
\draw[dotted] (lower-right) to[out=100,in=170] (lr-left);
\draw[dotted] (tl-right) to[out=310,in=10] (ll-left);

\node[rotate=68,scale=1.5] at (-6,1.8) {\dots};
\node[rotate=45,scale=1.5] at (4.2,-4.5) {\dots};

\node[below] (v1) at (0,0) {$x$}; 
\node[below] at (lower-num) {1}; 
\node[right] at (lr-num) {$i-1$}; 
\node[above right] at (tr-num) {$i$};
\node[above left] at (tl-num) {$i+1$}; 
\node[below left] at (ll-num) {$g$};

\scalebox{-1}[1]{
\draw[rotate=212.5,black!10!blue] 
(0,0) to[out=260,in=80] (-1.1,-4.7) ;
\draw[rotate=212.5,black!10!red] (-1.1,-4.7) to[out=260,in=190] (tl-C);
\draw[rotate=212.5,black!10!red, dash pattern = on 2pt off 2pt] (-2.73,-4.7) to[bend right] (tl-C);
\draw[rotate=212.5,black!10!blue,,decoration={ markings, mark=at position 0.65 with {\arrow[>=stealth]{>}}}, postaction={decorate}] (0,0) to[out=245,in=70] (-1.37,-3.5); 
\draw[rotate=212.5,black!10!red] (-1.37,-3.5) to[out=250,in=30] (-2.73,-4.7);}

\draw[rotate=212.5,black!10!red,decoration={ markings, mark=at position 0.55 with {\arrow[>=stealth]{>}}}, postaction={decorate}] 
(0,0) to[out=250,in=80] node[pos=0.5,above left]{$\alpha_{i+1}^{-1}$} (-1.2,-4.8) to[out=260,in=160] (tl-C);
\draw[rotate=212.5,black!10!red, dash pattern= on 2pt off 2pt] (-2.73,-4.7) to[bend right](tl-C);
\draw[rotate=212.5,black!10!red]
(0,0) to[out=245,in=70] (-1.4,-3.5)  to[out=250,in=30] (-2.73,-4.7);
\end{tikzpicture}
\end{minipage}
\begin{minipage}[t]{7cm}
\fontsize{10}{12}\selectfont
\begin{tikzpicture}[scale=0.35]
\pic[scale=0.35] (lower) at (0,-pi) {handle}; 
\pic[rotate=82.5,scale=0.35] (lr) at (-7.5:pi) {handle};
\pic[rotate=147.5,scale=0.35] (tr) at (57.5:pi) {handle};
\pic[rotate=212.5,scale=0.35] (tl) at (122.5:pi) {handle};
\pic[rotate=295,scale=0.35] (ll) at (205:pi) {handle};
\fill[gray!10]  (lower-right) to[out=100,in=170] (lr-left)-- 
(lr-right) to (tr-left)
-- (tr-right) to (tl-left)
-- (tl-right) to[out=310,in=10] (ll-left) 
--(ll-right) to (lower-left) -- cycle;
\draw[dotted] (lower-right) to[out=100,in=170] (lr-left);
\draw[dotted] (tl-right) to[out=310,in=10] (ll-left);

\node[rotate=68,scale=1.5] at (-6,1.8) {\dots};
\node[rotate=45,scale=1.5] at (4.2,-4.5) {\dots};

\node[below] (v1) at (0,0) {$x$}; 
\node[below] at (lower-num) {1}; 
\node[right] at (lr-num) {$i-1$}; 
\node[above right] at (tr-num) {$i$}; 
\node[above left] at (tl-num) {$i+1$}; 
\node[below left] at (ll-num) {$g$}; 

\draw[rotate=147.5,red!20!orange,decoration={ markings, mark=at position 0.5 with {\arrow[>=stealth]{>}}}, postaction={decorate}] 
(tr-A) to[out=10,in=280] (1.2,-4.8) to[out=100,in=290] (0,0) ;

\draw[rotate=212.5,red!20!orange,decoration={ markings, mark=at position 0.43 with {\arrow[>=stealth]{>}}}, postaction={decorate}] 
(0,0) to[out=250,in=80]node[left] {$\mu_i$} (-1.2,-4.8) to[out=260,in=160]  (tl-C);

\draw[red!10!orange,rotate=212.5,dash pattern= on 2pt off 2pt] (tl-C) to[out=200,in=270] (-1.5,-4.5) to[out=90,in=330] 
 (-1.72,-2.7);
\draw[red!10!orange,rotate=147.5,dash pattern= on 2pt off 2pt] (1.72,-2.7) to[out=210,in=90] (1.5,-4.5) to[out=270,in=340] (tr-A);
\end{tikzpicture}
\end{minipage}
\end{center}
\vspace{0.4cm}
These pictures in fact illustrate that the concatenated path $\mu_{g} \mu_{g-1} \cdots \mu_{1}$ is homotopic to the constant path based at~$x$, which means that
\begin{align*}
(\alpha_{1}^{-1} \beta_g \alpha_g \beta_g^{-1})
(\alpha_{g}^{-1} \beta_{g-1} \alpha_{g-1} \beta_{g-1}^{-1}) \cdots 
(\alpha_{2}^{-1} \beta_1 \alpha_1 \beta_1^{-1})
\end{align*}
represents the unit element in the fundamental group. Alternatively, by conjugating with $\alpha_{1}$ and inverting, we see that
\begin{align*}
\alpha_1 \beta_1 \alpha_{1}^{-1} \beta_1^{-1} \alpha_2 \beta_2 \alpha_2^{-1} \beta_2^{-1} \cdots 
\alpha_g \beta_g \alpha_{g}^{-1} \beta_g^{-1}
\end{align*}
represents the unit element in the fundamental group. This fact is even easier to see in the polygon model, where this path exactly traces out the boundary of the polygon and can therefore be contracted to a point in the interior of the polygon. Using the Seifert-van Kampen theorem, one can show that this relation is a defining relation for the fundamental group (cf.~\cite[Prop.~17.6, p.~242]{F}).

For the surfaces~$\Sigma_{g,n}$ with boundary, additional generators are needed for the fundamental group, which we denote by $\delta_1,\dots,\delta_n$. Starting at the base point, $\delta_j$ circles around the $j$th boundary component. Together $\delta_1,\dots,\delta_n$ form a so-called bouquet of circles. In the polygon model, $\delta_j$ arises by mapping the concatenation of $\xi_j$, $\rho_j$, and~$\xi_j^{-1}$ to the quotient. We therefore see that in this case, 
\begin{align*}
\alpha_1 \beta_1 \alpha_{1}^{-1} \beta_1^{-1} \alpha_2 \beta_2 \alpha_2^{-1} \beta_2^{-1} \cdots 
\alpha_g \beta_g \alpha_{g}^{-1} \beta_g^{-1} \delta_1 \cdots \delta_n 
\end{align*}
is homotopic to the constant path based at~$x$, and again this yields a defining relation for the fundamental group (cf.~\cite[Chap.~I, No.~43B, p.~100]{AS}). Because it is possible to solve in this relation for the last of the new generators, this implies that, in the presence of boundary components, the fundamental group of~$\Sigma_{g,n}$ is a free group on $2g+n-1$ generators. We note that, by a slight movement, it is always possible to find a representative in a relative homotopy class that does not intersect the boundary. In the sequel, we will assume in particular that this has been done for the generators~$\delta_j$.

\subsection{Mapping class groups} \label{MapClGr}
An important object associated with the surface~$\Sigma_{g,n}$ is its mapping class group 
$\Gamma_{g,n}$. There are several variants for the definition of this group in the literature; we will now explain which one is used in this article and how it compares to other variants.

By the boundary invariance theorem (cf.~\cite[Chap.~IV, Prop.~3.9, p.~61]{D}), a diffeomorphism of~$\Sigma_{g,n}$ will necessarily map a boundary component to a boundary component, but we require that the diffeomorphisms we consider also map the marked point on each boundary component to the corresponding marked point on the other boundary component. The group~$\Diffeo^+(\Sigma_{g,n})$ of orientation-preserving diffeomorphisms of~$\Sigma_{g,n}$ that permute the marked points becomes a topological group when endowed with the compact-open topology (cf.~\cite[Thm.~4, p.~598]{A}). By the fundamental property of the compact-open topology (cf.~\cite[Satz~14.17, p.~167]{Q}), the path-component~$\Diffeo^+_0(\Sigma_{g,n})$ of the identity mapping consists precisely of those orien\-tation-preserving diffeomorphisms that are isotopic to the identity. It is a normal subgroup. Continuity implies that the elements of~$\Diffeo^+_0(\Sigma_{g,n})$ cannot permute the marked points, but rather need to preserve them individually. We define the mapping class group as the corresponding quotient group:
\begin{Definition}
The mapping class group $\Gamma_{g,n}$ of $\Sigma_{g,n}$ is the quotient group
\[\Gamma_{g,n} \deq \Diffeo^+(\Sigma_{g,n})/\Diffeo^+_0(\Sigma_{g,n}).\]
In accordance with our notation for surfaces in Paragraph~\ref{Surf}, we will also write
$\Gamma_{g}$ if~$n=0$. The mapping class of a diffeomorphism~$\psi \in \Diffeo^+(\Sigma_{g,n})$ will be denoted by~$[\psi]$.
\end{Definition}

In view of this definition, two diffeomorphisms represent the same element in the mapping class group if and only if they are contained in the same path-component of~$\Diffeo^+(\Sigma_{g,n})$; i.e., if and only if they are isotopic. Because the path connecting them is contained in~$\Diffeo^+(\Sigma_{g,n})$, the corresponding isotopy must permute the marked points at each time of the deformation process. By continuity, this is only possible if all diffeomorphisms occurring in this process, and in particular those at the beginning and the end, permute the marked points in the same way. By enumerating the marked points, we can therefore obtain a group homomorphism
\[p\colon \Gamma_{g,n} \to S_n\]
to the symmetric group~$S_n$. We call the kernel of~$p$ the pure mapping class group and denote it by~$\Pu\Gamma_{g,n}$. An element in the pure mapping class group can be represented by a diffeomorphism that restricts to the identity on the entire boundary, not only on the marked points. As explained in \cite[Sec.~1.4, p.~42]{FM}, the mapping class group can alternatively be defined by using homeomorphisms instead of diffeomorphisms. 

For a subset~$U$ of~$\Sigma_{g,n}$, we also consider the subgroup of~$\Diffeo^+(\Sigma_{g,n})$ consisting of those diffeomorphisms that restrict to the identity on~$U$. If we divide it by its path component of the identity mapping, the arising quotient group~$\Gamma_{g,n}(U)$ is called the relative mapping class group modulo~$U$. It comes with a canonical group homomorphism 
\[F_U\colon \Gamma_{g,n}(U) \to \Gamma_{g,n}\]
called the forgetful map, because it arises by forgetting the information about the restriction to~$U$.
This homomorphism is not necessarily injective, because it might happen that a diffeomorphism is isotopic to the identity, although the diffeomorphisms that occur during this deformation process cannot be chosen so that they restrict to the identity on~$U$. If~$U = \{u\}$ consists of a single point~$u$, we will also write~$\Gamma_{g,n}(u)$ and~$F_u$ instead of~$\Gamma_{g,n}(\{u\})$ and~$F_{\{u\}}$.

The definition of mapping class groups used here is the one from~\cite[Sec.~4, p.~485]{L1}. Often, other definitions are used, for example in~\cite[Sec.~2.1, p.~44]{FM} or~\cite[Sec.~1, p.~101]{Ko}. In these definitions, there are additional marked points in the interior, not on the boundary like in our definition. These points are frequently called punctures. In contrast to our definition, both the definition in~\cite{FM} and the definition in~\cite{Ko} require that the diffeomorphisms restrict to the identity on the boundary. In~\cite{FM}, the punctures in the interior may be permuted by a diffeomorphism, while they are required to be preserved individually in~\cite{Ko}.

\subsection{Dehn twists} \label{Dehn}
An annulus can be defined as the Cartesian product~$A \deq S^1 \times [0,2 \pi]$ of the unit circle $S^1 \deq \{z \in \C \mid |z|=1\}$ and the interval~$[0, 2 \pi] \subset \R$. $A$~is an orientable {\mbox{2-manifold}} with boundary, which is diffeomorphic to~$\Sigma_{0,2}$ and therefore sometimes called a binion. We single out one of the two possible orientations by requiring that in the tangent space of~$A$ at the point~$(1,\pi)$ the tangent vector~$v_1$ to the curve $t \mapsto (1, \pi + t)$ and the tangent vector~$v_2$  to the curve $t \mapsto (e^{it}, \pi)$ form a positively oriented basis~$v_1,v_2$. Note that both curves start at the specified point for the parameter value $t=0$. On~$A$, we define the twist map
\[\fd\colon A \to A,~(z,t) \mapsto (z e^{it},t). \]

Now suppose that~$\gamma\colon S^1 \to \Sigma_{g,n}$ is a simple closed curve that does not intersect the boundary. It follows from the existence of tubular neighborhoods (cf.~\cite[Chap.~4, Thm.~5.2, p.~110]{H}) that there is an orientation-preserving embedding $\phi\colon A \to \Sigma_{g,n}$
with the property that $\phi(z,0) = \gamma(z)$. 
\begin{Definition}
We define $\fd_\gamma\colon \Sigma_{g,n} \to \Sigma_{g,n}$ as the map
\[p \mapsto 
\begin{cases}
(\phi \circ \fd \circ \phi^{-1})(p) &:  p \in \phi(A) \\
p &:  p \notin \phi(A)
\end{cases}\] 
Note that this map is a homeomorphism, not a diffeomorphism. But as we mentioned above, the mapping class group can also be defined using homeomorphisms, so that~$\fd_\gamma$ determines an element~$[\fd_\gamma]$ in the mapping class group, called the Dehn twist along~$\gamma$. Since $\fd(z,0) = (z,0)$, we have 
$\fd_\gamma \circ \gamma = \gamma$.
\end{Definition}
It follows from the isotopy of tubular neighborhoods (cf.~\cite[Chap.~4, Thm.~5.3, p.~112]{H}) that the mapping class of~$\fd_\gamma$ does not depend on~$\phi$. Furthermore, any curve isotopic to~$\gamma$ yields the same Dehn twist in the mapping class group. Further details on Dehn twists and their geometric meaning can be found in~\cite[Sec.~3.1, p.~64ff]{FM}. It is important to note that in some references the other orientation for the annulus~$A$ is used in the definition of a Dehn twist, for example in~\cite[Par.~2.2, p.~316f]{L2} and~\cite[Sec.~2, p.~102]{Ko}. Dehn twists defined using the other orientation are the inverses of the Dehn twists as they are defined here.

It is almost immediate from this definition that non-intersecting curves $\gamma_1$ and $\gamma_2$ give rise to commuting Dehn-twists $\fd_{\gamma_1}$ and $\fd_{\gamma_2}$. This and other basic properties of Dehn-twists are discussed in~\cite[Sec.~3.2, p.~73ff]{FM}.  

We will introduce some special notation for a number of Dehn twists along certain curves. The first of these are the following: In the construction of~$\Sigma_{g,n}$ from~$\Sigma_{g}$, the removal of the $j$th open disk from~$\Sigma_g$ leaves a connected component of the boundary that is diffeomorphic to the unit circle~$S^1$. In the polygon model discussed in Paragraph~\ref{Surf}, this curve is parametrized by~$\rho_j$. By moving this curve slightly to the interior, for example by using a collar (cf.~\cite[Chap.~4, Thm.~6.1, p.~113]{H}), we obtain a simple closed curve~$\partial_j$ that is freely homotopic to~$\rho_j$, and therefore also to~$\delta_j$, but does not intersect the boundary. We will use the notation~$\fd_j$ for the Dehn twist along~$\partial_j$. As explained in~\cite[Par.~4.2.5, p.~102]{FM}, $\fd_j$~is contained in the center of the pure mapping class group.

\subsection{Braidings} \label{Braid}
Besides Dehn twists, the second type of elements of the mapping class group that we will need are the braidings of the boundary components. To define them, we consider the surface~$B$ in~$\R^2$ that consists of the closed unit disk from which two open disks with radius~$1/4$ and centers~$(1/2,0)$ and~$(-1/2,0)$ on the $x$-axis, respectively, have been removed. $B$~is an orientable {\mbox{2-manifold}} with boundary, which is sometimes called a trinion, as it is diffeomorphic to~$\Sigma_{0,3}$. We orient~$B$ by requiring that the canonical basis at the origin is positively oriented. On the boundary components, i.e., the unit circle and the two smaller circles, we choose the base points~$(0,1)$, $(1/2,1/4)$, and~$(-1/2,1/4)$, respectively. As discussed in~\cite[Sec.~V.2.5, p.~251]{T}, there is an orientation-preserving diffeomorphism $\fb\colon B \to B$, which we call the braiding map, that is the identity on the unit circle, interchanges the two smaller circles as well as their base points while preserving their orientations, and transforms the line segments connecting the base point of the unit circle with the base points of the smaller circles as indicated in the picture

\begin{center}
\begin{tikzpicture}[scale=1.5]
\draw[fill=gray!10] (0,0) circle (1);
\draw[violet,fill=lightgray] (-0.5,0) circle (0.25);
\draw[violet,fill=lightgray] (0.5,0) circle (0.25);
\draw[black!40!green] (0,1) to (-0.5,0.25);
\draw[black!40!green]  (0,1) to (0.5,0.25);
\node[circle, fill=black,inner sep=0pt,minimum size=3pt] at (0,1) {};
\node[circle, fill=black,inner sep=0pt,minimum size=3pt] at (-0.5,0.25) {};
\node[circle, fill=black,inner sep=0pt,minimum size=3pt] at (0.5,0.25) {};

\draw[->] (1.2,0) to node[above] {$\mathfrak{b}$} (1.8,0);

\draw[fill=gray!10] (3,0) circle (1);
\draw[violet,fill=lightgray] (2.5,0) circle (0.25);
\draw[violet,fill=lightgray] (3.5,0) circle (0.25);

\draw[black!40!green] (3,1) to[out=-25,in=85] (3.88,0) to[out=265,in=0] (3.5,-0.4) to[out=180, in=285] (3.12,0) to[out=105,in=350] (2.8,0.4) to[out=170,in=70] (2.5,0.25);
\draw[black!40!green]  (3,1) to (3.5,0.25);

\node[circle, fill=black,inner sep=0pt,minimum size=3pt] at (3,1) {};
\node[circle, fill=black,inner sep=0pt,minimum size=3pt] at (2.5,0.25) {};
\node[circle, fill=black,inner sep=0pt,minimum size=3pt] at (3.5,0.25) {};
\end{tikzpicture}
\end{center}
(cf.~also \cite[Fig.~2.9, p.~254]{T}). As also stated in \cite[loc.~cit.]{T}, these conditions determine~$\fb$ up to isotopy. It should be noted that the braiding map we define here is the inverse of the braiding map in~\cite[Sec.~V.2.5, p.~251]{T}, but coincides with the map used in~\cite[Par.~5.1.1, p.~119]{FM}.

We will now use the braiding map to define the braiding of two boundary components of~$\Sigma_{g,n}$. It should be noted that it is not possible to define such a braiding if the boundary components are distributed over the surface in a somewhat arbitrary fashion, as in our description of the surface in terms of the connected sum model, as this definition depends not only on these two boundary components alone, but also on their position relative to the other boundary components. Rather, it is necessary that the boundary components are not only numbered, but in fact arranged in a certain order, like in the polygon model described in Paragraph~\ref{Surf}. We use the standard arrangement of the boundary components in this model as follows: For two indices satisfying~\mbox{$1\le i<j \le n$}, we choose an orientation-preserving embedding 
\mbox{$\varphi_{i,j}\colon B \to \Sigma_{g,n}$} of surfaces that maps the circle with center~$(-1/2,0)$ to the $i$th boundary component of~$\Sigma_{g,n}$ and the circle with center~$(1/2,0)$ to the $j$th boundary component of~$\Sigma_{g,n}$, mapping base points to base points. The unit circle is mapped to a smooth curve~$\gamma_{i,j}$ on~$\Sigma_{g,n}$ that is defined as follows: Starting at~$x$, we follow a path slightly right from~$\xi_i$ until we reach the curve $\partial_i$, which we follow until we need to return along~$\xi_i$, which we now do on its left side, which is the right side for the reversed orientation of~$\xi_i$. Shortly before reaching~$x$, however, we turn right and follow a path slightly right from~$\xi_j$ until we reach $\partial_j$, which we follow in the same way as before until we need to return along~$\xi_j$, which we now do on its left side until we reach the base point~$x$. The curve $\gamma_{i,j}$ is illustrated in the following picture:
\begin{center}
\begin{tikzpicture}
\node[above] (v1) at (0,0) {$x$};
\draw[black!40!green] (0,0) to (-1.5,-3);
\draw[black!40!green] (0,0) to (1.5,-3);
\draw[violet,fill=lightgray] (-3,-3)  circle (10pt) node {1} ;
\draw[violet,fill=lightgray] (-1.5,-3)  circle (10pt) node {$i$} ;
\draw[violet,fill=lightgray] (0,-3) circle (10pt) node {$l$} ;
\draw[violet,fill=lightgray] (1.5,-3) circle (10pt) node {$j$} ;
\draw[violet,fill=lightgray] (3,-3)  circle (10pt) node {$n$} ;
\node[scale=0.6] at (-2.3,-3) {\dots};
\node[scale=0.6] at (-0.6,-3) {\dots};
\node[scale=0.6] at (0.65,-3) {\dots};
\node[scale=0.6] at (2.35,-3) {\dots};
\draw[darkgray,decoration={ markings, mark=at position 0.11 with {\arrow[>=stealth]{>}}}, postaction={decorate}] (0,0) to[out=180,in=50] (-0.3,-0.2) to[out=230,in=60] (-2.02,-2.76) to[out=240,in=160] (-1.77,-3.49) to[out=340,in=248] (-0.98,-3.24) to[out=68,in=253] (-0.17,-0.7) to[out=73,in=180] (0,-0.6) to[out=0,in=107] (0.17,-0.7) to[out=287,in=112] (0.98,-3.24) to[out=292,in=200] (1.77,-3.49) to[out=20,in=300] (2.02,-2.76) to[out=120,in=310] (0.3,-0.2) to[out=130,in=0] (0,0);
\node[darkgray] at (-1.5,-1.4) {$\gamma_{i,j}$};
\end{tikzpicture}
\end{center}

We now define the braiding of the $i$th and the $j$th boundary component in the same way as we defined Dehn twists in Paragraph~\ref{Dehn}:
\begin{Definition}
We define $\fb_{i,j}\colon \Sigma_{g,n} \to \Sigma_{g,n}$ as the map
\[p \mapsto 
\begin{cases}
(\varphi_{i,j} \circ \fb \circ \varphi_{i,j}^{-1})(p) &:  p \in \varphi_{i,j}(B) \\
p &:  p \notin \varphi_{i,j}(B)
\end{cases}\] 
\end{Definition}

Again, this map is a homeomorphism, not a diffeomorphism, but as for Dehn twists, it nonetheless determines an element~$[\fb_{i,j}]$ in the mapping class group, called the braiding of the $i$th and the $j$th boundary component. It is obvious that under the 
projection~$p \colon \Gamma_{g,n} \to S_n$ to the symmetric group introduced in Paragraph~\ref{MapClGr}, $[\fb_{i,j}]$ maps to the transposition of~$i$ and~$j$. 

We note that a mapping class very similar to~$[\fb_{i,j}]$, using punctures instead of boundary components, is called a half-twist in \cite[Sec.~9.4, p.~255]{FM}. There are several relations between half-twists and Dehn twists; one is explained right there, another one will be discussed in Paragraph~\ref{MoreSpecDehn}.

\subsection{The action on the fundamental group} \label{FundGroup}
During certain steps of our argument, it will be necessary to cut out small disks from our surface or at least to avoid certain points. For a Dehn twist, it makes a difference how we avoid these points. Suppose that a homotopy class is represented by a smooth simple closed curve~$\gamma$ that starts and ends at a point~$x$ on our surface. We choose a neighborhood~$U$ of~$x$ that is diffeomorphic to the open unit disk in~$\R^2$ via an orientation-preserving diffeomorphism~$\phi$ that sends~$x$ to the origin. As before, we assume that the unit disk has the orientation in which the canonical basis is a positively oriented basis for the tangent space at the origin. By perhaps passing to a smaller neighborhood, we can assume that~$\gamma$ passes through the unit disk as indicated on the left of the picture below. Because~$\phi$ is orientation-preserving, it is meaningful to talk about pushing off~$\gamma$ to the left or to the right, as indicated in the middle and right picture below:
\begin{center}
\begin{tikzpicture}
\node[circle,fill,scale=0.3,label=right:$x$] (x1) at (0,0) {};
\node[circle,fill,scale=0.3,label=right:$x$] (y1) at (2,0) {};
\node[circle,fill,scale=0.3,label=left:$x$] (x2) at (4,0) {};
\draw[->] (0,-2) -- (0,2) node[left,yshift=-1cm]{$\gamma$};
\draw[->] (2,-2) -- (2,-0.5) arc(270:90:0.5) -- (2,2)
node[left,yshift=-1cm]{$\gamma'$};
\draw[->] (4,-2) -- (4,-0.5) arc(-90:90:0.5) -- (4,2)
node[left,yshift=-1cm]{$\gamma''$};
\end{tikzpicture}
\end{center}
In this way, we obtain two curves~$\gamma'$ and~$\gamma''$ whose free homotopy classes  in~$\Sigma_{g,n}\setminus\{x\}$ do not depend on our choices. Both curves avoid a small neighborhood of the point~$x$ and can be chosen so that they are still simple closed curves. Although we have in general~$[\fd_{\gamma'}] \neq [\fd_{\gamma''}] \in \Gamma_{g,n}(x)$, we clearly have
\vspace{1mm}
\[F_x([\fd_{\gamma'}]) =  F_x([\fd_{\gamma''}]) = [\fd_{\gamma}]\]
\vspace{1mm}
for their images under the forgetful map~$F_x\colon \Gamma_{g,n}(x) \to \Gamma_{g,n}$.

\vspace{1mm}

As explained in~\cite[Par.~8.2.7, p.~235]{FM}, we can choose for~$x$ the base point of the fundamental group to get an action of~$\Gamma_{g,n}(x)$ on~$\pi_1(\Sigma_{g,n},x)$. In particular, the Dehn twists~$\fd_{\gamma'}$ and~$\fd_{\gamma''}$ act on~$\pi_1(\Sigma_{g,n},x)$. We now give explicit formulas for this action in the case where the homotopy class acted upon can be represented by a curve~$\kappa$ that does not intersect~$\gamma$, except at the base point~$x$. It should be noted that, in the present situation, the curves~$\kappa$ and~$\gamma$, and therefore also~$\gamma'$ and~$\gamma''$, are oriented, although, according to our construction, Dehn twists like~$\fd_{\gamma'}$ do not depend on the orientation of~$\gamma'$. In each homotopy class, we can choose smooth representatives (cf.~\cite[Par.~1.2.2, p.~26]{FM}), which by definition of smoothness have a nowhere vanishing tangent vector. If we in addition can choose these representatives so that their tangent vectors~$\dot{\kappa}_x$ and~$\dot{\gamma}_x$ at~$x$ are linearly independent, we say that~$\kappa$ and~$\gamma$ intersect transversely at~$x$. If~$\dot{\kappa}_x$ and~$\dot{\gamma}_x$ form a positively oriented basis of the tangent space at~$x$, we define the algebraic intersection number~$i_A(\kappa, \gamma)$ as~$+1$, and if they form a negatively oriented basis of the tangent space at~$x$, we define the algebraic intersection number~$i_A(\kappa, \gamma)$ as~$-1$. This definition can be extended to curves with finitely many intersection points by defining 
$i_A(\kappa, \gamma)$ as the sum of the so-determined signs at all intersection points. This definition is illustrated in the picture

\vspace{1mm}

\begin{center}
\begin{tabular}{llllllll}
$i_A(\kappa, \gamma)= +1$ & & & & & & & $i_A(\kappa, \gamma)= -1$\\
\begin{tikzcd}[column sep = large, row sep = large]
\ar[from=2-2]\gamma & \ar[from=2-1]\kappa\\
\textcolor{white}{\gamma} & \textcolor{white}{\kappa}
\end{tikzcd} & & & & & & & \begin{tikzcd}[column sep = large, row sep = large]
\ar[from=2-2]\kappa & \ar[from=2-1]\gamma\\
\textcolor{white}{\gamma} & \textcolor{white}{\kappa}
\end{tikzcd}
\end{tabular}
\end{center}

in which the standard orientation of the plane has been used, i.e., the orientation in which the canonical basis is positive and the normal vector points towards the reader.

\pagebreak

The action of the two Dehn twists on the homotopy class~$[\kappa]$ of~$\kappa$ is summarized in the following table:
\vspace{-3mm}
\renewcommand{\arraystretch}{1.5}
\begin{center}
\begin{tabular}{r|cc}
& $i_A(\kappa,\gamma)=1$ & $i_A(\kappa,\gamma)=-1$ \\ 
\hline 
$\fd_{\gamma'}([\kappa])$ & $[\kappa \gamma]$ & $[\gamma^{-1} \kappa]$  \\ 
$\fd_{\gamma''}([\kappa])$ & $[\gamma \kappa]$ & $[\kappa \gamma^{-1}]$  
\end{tabular}
\end{center}

Here, expressions like $[\kappa \gamma] = [\kappa] [\gamma]$ mean the product in the fundamental group: The concatenation of first~$\kappa$, followed by~$\gamma$. To understand the table, let us consider how the entry for $\fd_{\gamma'}([\kappa])$ in the case $i_A(\kappa,\gamma)=1$ comes about: The curve starts at~$x$ with~$\kappa$. In view of how~$\gamma'$ has been pushed off~$\gamma$ and how the curves intersect, $\kappa$~is moving away from~$\gamma'$ and traces out~$\kappa$ almost completely before approaching the base point~$x$ upon returning. But before reaching~$x$, it encounters~$\gamma'$ and at this point is forced to turn left in view of our definition of a Dehn twist. But because $i_A(\kappa,\gamma)=1$, turning left is in this case the same as following~$\gamma$ in the direction of its orientation. The other entries arise from analogous considerations.

It is of course possible that we cannot choose representatives whose tangent vectors~$\dot{\kappa}_x$ and~$\dot{\gamma}_x$ at~$x$ are linearly independent. In this case, the two curves do not intersect transversely, and by pushing off~$\gamma$ either to the left or to the right, we can avoid intersection completely. Depending on the orientation of~$\gamma$, the situation is described by one of the following two pictures:
\vspace{-1mm}
\begin{center}
\begin{tikzpicture}
\node[circle,fill,scale=0.5] (x1) at (0,0) {};
\node[circle,fill,scale=0.5] (y1) at (4,0) {};
\draw[-] (-1,-2) to [out=45, in=-100] (0,0) to [out=100, in=315](-1,2) node[left]{$\kappa$};
\draw[->] (1,-2)  to [out=135, in=-80] (0,0) to [out=80, in=-135] (1,2) node[right]{$\gamma$};
\draw[dash pattern= on 2pt off 2pt] (1,-2)  to [out=135, in=-80] (-0.3,0) to [out=80, in=-135] (1,2) node[left,yshift=-2cm,xshift=-1.4cm]{$\gamma'$};
\draw[dash pattern= on 2pt off 2pt] (1,-2)  to [out=135, in=-80] (0.3,0) to [out=80, in=-135] (1,2) node[left,yshift=-2cm]{$\gamma''$};
\draw[-] (3,-2) to [out=45, in=-100] (4,0) to [out=100, in=315] (3,2) node[left]{$\kappa$};
\draw[<-] (5,-2)  to [out=135, in=-80] (4,0) to [out=80, in=-135] (5,2) node[right]{$\gamma$};
\draw[dash pattern= on 2pt off 2pt] (5,-2)  to [out=135, in=-80] (3.7,0) to [out=80, in=-135] (5,2) node[left,yshift=-2cm,xshift=-1.4cm]{$\gamma''$};
\draw[dash pattern= on 2pt off 2pt] (5,-2)  to [out=135, in=-80] (4.3,0) to [out=80, in=-135] (5,2) node[left,yshift=-2cm]{$\gamma'$};
\end{tikzpicture}
\end{center}
\vspace{-1mm}
In the first case, the actions of the Dehn twists are given by
\[\fd_{\gamma'}([\kappa]) = [\gamma^{-1} \kappa \gamma] \qquad \text{and} \qquad  
\fd_{\gamma''}([\kappa]) = [\kappa],\]  
while in the second case they are given by 
\[\fd_{\gamma'}([\kappa]) = [\kappa] \qquad \text{and} \qquad  
\fd_{\gamma''}([\kappa]) = [\gamma \kappa \gamma^{-1}].\]  
In both cases, these formulas are valid regardless of the orientation of~$\kappa$.

\subsection{Dehn twists for special curves} \label{SpecDehn}
In Paragraph~\ref{Dehn}, we have already introduced the notation $\fd_j$ for the Dehn twist determined by the curve~$\partial_j$. We now also assign names to the Dehn twists derived from the other curves stemming from the polygon model discussed in Paragraph~\ref{Surf} and define
\[\ft_i \deq \fd_{\alpha_i}, \qquad \qquad \fr_i \deq \fd_{\beta_i}, \qquad \qquad  \fn_i \deq \fd_{\mu_i}. \]
Because the curves~$\alpha_i$ and~$\beta_i$ intersect exactly once, the corresponding Dehn twists satisfy the braid relation $[\ft_i \fr_i \ft_i] = [\fr_i \ft_i \fr_i]$ 
(cf.~\cite[Par.~3.5.1, Prop.~3.11, p.~77]{FM}). For the elements
$\fs_i \deq \ft_i^{-1} \fr_i^{-1} \ft_i^{-1}$,
the braid relation implies that $[\fs_i \ft_i \fs_i^{-1}] = [\fr_i]$. 

These special Dehn twists are important in view of the following result, which is known as the Dehn-Lickorish theorem:
\begin{Theorem} \label{DehnLick} 
If $g \ge 1$, the Dehn twists $[\ft_i]$, $[\fr_i]$, and $[\fn_l]$, for $i=1,\dots,g$ and $l=1,\dots, g-1$, which are called the Lickorish generators, generate the mapping class groups~$\Gamma_{g} = \Gamma_{g,0}$ and~$\Gamma_{g,1}$.  
\end{Theorem}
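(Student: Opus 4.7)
The plan is to argue by induction on the genus $g$. In the base case $g=1$, the Lickorish generators reduce to $\ft_1$ and $\fr_1$. I would invoke the classical identification of $\Gamma_1$ with $\SL(2,\Z)$ via the action on $H_1(\Sigma_1;\Z) \cong \Z^2$, under which $\ft_1$ and $\fr_1$ correspond to the two standard unipotent generators of $\SL(2,\Z)$, settling $\Gamma_{1,0}$. For $\Gamma_{1,1}$, I would apply the capping sequence for filling in the single boundary component, whose kernel is cyclic, generated by the boundary twist $\fd_1$; since $\fd_1$ admits a standard expression as a word in $\ft_1$ and $\fr_1$ via the chain relation on a genus-one surface with one boundary, the $\Gamma_{1,1}$ case follows from the $\Gamma_{1,0}$ case.

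For the inductive step, let $L \subseteq \Gamma_g$ denote the subgroup generated by the Lickorish twists. The key claim is that $L$ acts transitively on the set of isotopy classes of non-separating simple closed curves on $\Sigma_g$. Granting this, for any $[\psi] \in \Gamma_g$ I would choose $[\phi] \in L$ with $(\phi \circ \psi)([\alpha_1]) = [\alpha_1]$ and then, after isotopy and possibly after composing with $\ft_1$ to correct orientations, arrange that $\phi \circ \psi$ preserves $\alpha_1$ pointwise together with a tubular neighborhood setwise. Cutting along $\alpha_1$ yields a diffeomorphism of $\Sigma_{g-1,2}$; by the inductive hypothesis together with the capping sequence (used to cap off the two new boundary components and to track the resulting boundary twists) this cut map is a product of Dehn twists whose lifts to $\Sigma_g$ lie in $L$. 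The case $\Gamma_{g,1}$ is handled in parallel, by lifting generators of $\Gamma_g$ along the capping sequence relating $\Gamma_{g,1}$ to $\Gamma_g$.

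The principal obstacle is proving the transitivity claim, which is the geometric heart of the argument. My strategy is to connect any non-separating simple closed curve $\gamma$ to $\alpha_1$ by a chain $\gamma = \gamma_0, \gamma_1, \ldots, \gamma_k = \alpha_1$ of non-separating simple closed curves with consecutive terms meeting transversely in exactly one point, constructed inductively on the geometric intersection number $|\gamma \cap \alpha_1|$ by iterated bigon-removing surgery. For two consecutive curves $\gamma_i, \gamma_{i+1}$ meeting once, I would take a regular neighborhood, which is a one-holed torus embedded in $\Sigma_g$, and observe that within this subsurface the mapping class group is already generated by $\fd_{\gamma_i}$ and $\fd_{\gamma_{i+1}}$ (by the base case applied to the subsurface) and acts transitively on isotopy classes of essential non-separating simple closed curves, so some explicit word in these two Dehn twists sends $[\gamma_i]$ to $[\gamma_{i+1}]$. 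The remaining delicate point is to verify that each of these intermediate Dehn twists actually lies in $L$: the twists $\ft_i$ and $\fr_i$ handle curves supported in a single handle, while the twists $\fn_l$ supply the geometric bridges needed to move non-separating simple closed curves between adjacent handles, and together with the braid relation $[\ft_i \fr_i \ft_i] = [\fr_i \ft_i \fr_i]$ recorded in Paragraph~\ref{SpecDehn}, these suffice to express any Dehn twist along a non-separating curve that arises in the chain.
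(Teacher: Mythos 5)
The paper does not give its own proof of this theorem; it simply cites \cite[Par.~4.4.4, p.~113ff]{FM}. Your sketch follows the broad outline of the Lickorish-style argument presented there: induction on genus, a transitivity argument on nonseparating simple closed curves, and the capping sequence. The base case $g=1$ is handled correctly, modulo one small slip: $\ft_1$ fixes $\alpha_1$ together with its orientation, so composing with it cannot ``correct orientations''; you would want something like $\fs_1^2$, which acts as $-\id$ on $H_1(\Sigma_1,\Z)$.

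The genuine gap is precisely where you flag ``the remaining delicate point.'' Your transitivity argument moves along a chain $\gamma = \gamma_0, \ldots, \gamma_k = \alpha_1$ via products of the form $\fd_{\gamma_i}\fd_{\gamma_{i+1}}$, which requires knowing $\fd_{\gamma_i} \in L$ for the \emph{intermediate} curves $\gamma_i$. But these are arbitrary nonseparating curves, and establishing $\fd_{\gamma_i} \in L$ is essentially the assertion you are proving, so the argument as written is circular. Invoking the braid relation $\ft_i\fr_i\ft_i = \fr_i\ft_i\fr_i$ does not break this circularity: the identity $\fd_b = (\fd_a\fd_b)\fd_a(\fd_a\fd_b)^{-1}$ (for $i(a,b)=1$) \emph{is} the braid relation, rewritten, and so gives nothing new. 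The standard resolution separates two statements: (i) $\Gamma_g$ is generated by Dehn twists about \emph{all} nonseparating curves, where the chain argument is not circular since the intermediate twists are permitted, and the genus reduction proceeds through a stabilizer together with the Birman sequence; (ii) every such nonseparating twist lies in the Lickorish subgroup $L$, which is a separate Humphries/Lickorish-type argument with its own topological case analysis (or, for the $2g+1$ Humphries generators, the lantern relation). Your proposal conflates these two steps.

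A secondary issue: cutting $\Sigma_g$ along $\alpha_1$ yields $\Sigma_{g-1,2}$, which is not covered by your stated inductive hypothesis (only $\Gamma_{g-1}$ and $\Gamma_{g-1,1}$). Reducing from $\Gamma_{g-1,2}$ requires not only the capping sequence (whose kernel, a boundary twist, is easy to track) but also the Birman sequence, whose kernel is the point-pushing subgroup; the latter is generated by products $\fd_{\gamma'}\fd_{\gamma''}^{-1}$ for simple closed $\gamma$, and these contributions must be accounted for. Your phrase ``and to track the resulting boundary twists'' gestures at the first ingredient but omits the second.
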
\vspace{-0.2cm}
A proof of this result can be found in \cite[Par.~4.4.4, p.~113ff]{FM}. We note that in the case $g=0$, the groups~$\Gamma_{g}$ and~$\Gamma_{g,1}$ are trivial anyway as a consequence of the Alexander lemma (cf.~\cite[Par.~2.2.1, p.~47ff]{FM}). The case $g=1$ will be discussed in more detail in Paragraph~\ref{Torus}. 

In the case where there is more than one boundary component, we need additional Dehn twists around the curves $\zeta_l$, which we denote by $\fz_l$ for $l=1,\dots,n-1$. The curve $\zeta_l$ separates the $l$th and $(l+1)$st boundary component and connects to the $g$th handle as shown in the picture for the case $g=3$ and $n=2$:
\vspace{-0.5cm}
\begin{center}
\fontsize{9}{9}\selectfont
\begin{tikzpicture}[scale=0.35]
\pic[scale=0.35] (lower) at (0,-pi) {handle}; 
\pic[rotate=120,scale=0.35] (tr) at (30:pi) {handle};
\pic[rotate=-120,scale=0.35] (tl) at (150:pi) {handle};
\fill[gray!10]  (lower-right) to[out=100,in=200] (tr-left)-- 
(tr-right) to[out=-140,in=-40] (tl-left)
-- (tl-right) to[out=-20,in=80] (lower-left) -- cycle;
\draw (lower-right) to[out=100,in=200] (tr-left);
\draw (tr-right) to[out=-140,in=-40] (tl-left);
\draw (tl-right) to[out=-20,in=80] (lower-left);

\node[below] at (lower-num) {2}; 
\node[above right] at (tr-num) {3}; 
\node[above left] at (tl-num) {1}; 
\fontsize{7}{7}\selectfont
\draw[violet,rotate=-180, 
postaction={decorate},fill=lightgray] (-0.9,-2.2)  circle (14pt) node {$X_1$} ;
\draw[violet,rotate=-180,
postaction={decorate},fill=lightgray] (0.9,-2.2) circle (14pt) node {$X_2$} ;
\fontsize{9}{9}\selectfont
\draw[cyan] (0,2.67) to[out=260, in=160] (0.4,1.4) to[out=340, in=210]  (4,2.75);
\node[left,cyan] at (3.8,2.9) {$\zeta_1$};
\draw[cyan,dash pattern=on 2pt off 2pt] (0,2.67) to[out=270, in=160] (0.6,1.7) to[out=340, in=200] (4,2.75);
\end{tikzpicture}
\end{center}

For this case, we have the following variant of the Dehn-Lickorish theorem (cf.~\cite[Par.~4.4.4, p.~113f]{FM}): 
\begin{Theorem}
If $g \geq 1$ and $n \geq 2$, the mapping classes $[\ft_i]$, $[\fr_i]$, $[\fn_j]$, $[\fd_k]$, $[\fz_{l}]$, and~$[\fb_{l,l+1}]$, for $i=1,\dots,g$, $j=1,\dots, g-1$, $k=1,\dots,n$, and $l=1,\dots,n-1$, generate the mapping class group $\Gamma_{g,n}$.
\end{Theorem}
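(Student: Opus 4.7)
The plan is to argue by induction on the number~$n$ of boundary components, reducing the claim to the Dehn--Lickorish theorem (Theorem~\ref{DehnLick}) by means of the capping and Birman sequences that were outlined in the introduction.

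The first step is to handle the symmetric group part: under the projection $p\colon \Gamma_{g,n}\to S_n$ from Paragraph~\ref{MapClGr}, the braiding $[\fb_{l,l+1}]$ maps to the adjacent transposition $(l,l+1)$, and the transpositions $(l,l+1)$ for $l=1,\ldots,n-1$ generate $S_n$. It therefore suffices to show that the listed Dehn twists, together with appropriate coset representatives chosen among the~$[\fb_{l,l+1}]$, generate the pure mapping class group~$\Pu\Gamma_{g,n}$.

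For the pure mapping class group I would argue by induction on~$n$. In the base case $n=1$, $\Pu\Gamma_{g,1}=\Gamma_{g,1}$ is generated by the Lickorish generators $[\ft_i]$, $[\fr_i]$, $[\fn_j]$ by Theorem~\ref{DehnLick}, and $[\fd_1]$ lies in this subgroup via the chain relation. For the inductive step I cap the $n$-th boundary component by a disk carrying a distinguished interior point~$y_n$; the capping sequence yields
\[ 1 \longrightarrow \langle [\fd_n] \rangle \longrightarrow \Pu\Gamma_{g,n} \longrightarrow \Pu\Gamma_{g,n-1}(y_n) \longrightarrow 1, \]
whose kernel is one of the listed generators, and the Birman sequence further gives
\[ 1 \longrightarrow \pi_1(\Sigma_{g,n-1},y_n) \longrightarrow \Pu\Gamma_{g,n-1}(y_n) \longrightarrow \Pu\Gamma_{g,n-1} \longrightarrow 1. \]
By the induction hypothesis the quotient $\Pu\Gamma_{g,n-1}$ is generated by the corresponding twists $[\ft_i]$, $[\fr_i]$, $[\fn_j]$, $[\fd_k]$ with $k\le n-1$, and $[\fz_l]$ with $l\le n-2$, each of which admits an obvious lift to $\Pu\Gamma_{g,n}$ along a representative curve that is chosen to avoid the capping disk.

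The main obstacle will be to show that every point-pushing class from $\pi_1(\Sigma_{g,n-1},y_n)$ lies in the subgroup generated by the elements on our list. The point-pushing lemma expresses the point-push along a simple loop based at~$y_n$ as the product of two Dehn twists along the parallel boundary curves of a tubular neighborhood of the loop, and one has to match, for each generator of the fundamental group, the resulting pair of curves with conjugates (under the braidings $[\fb_{l,l+1}]$) of curves whose Dehn twists already figure among our generators: loops encircling the $l$-th boundary component yield combinations of $[\fz_l]$, $[\fz_{l-1}]$, and $[\fd_l]$, whereas loops parallel to $\alpha_i$ or $\beta_i$ produce conjugates of $[\ft_i]$ or $[\fr_i]$. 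This case-by-case bookkeeping, carried out using the isotopy invariance of Dehn twists from Paragraph~\ref{Dehn} and the explicit positions of the curves $\zeta_l$, $\partial_k$ introduced in Paragraph~\ref{SpecDehn}, closes the inductive step once combined with the two exact sequences above.
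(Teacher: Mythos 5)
The paper does not prove this theorem---it only cites Farb--Margalit \cite[Par.~4.4.4]{FM}---so there is no proof in the text to compare against. Your outline (induction on the number of boundary components via capping and the Birman exact sequence) is the standard strategy used in that reference, so the overall shape is right. There is, however, a genuine gap precisely where the real work is concentrated.

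The crux of the inductive step is showing that the point-push $P_{y_n}([\gamma]) = [\fd_{\gamma'}\fd_{\gamma''}^{-1}]$ lies in the subgroup $G$ generated by the listed elements, for each generator $[\gamma]$ of $\pi_1(\Sigma_{g,n-1},y_n)$. Your claim that the pushoff curves $\gamma'$ and $\gamma''$ are obtained from listed curves by conjugating with the braidings $[\fb_{l,l+1}]$ does not hold up. For loops parallel to $\alpha_i$ or $\beta_i$, the pushoffs are nonseparating simple closed curves in the interior that are not carried to the standard curves by any product of braidings (which have support near the boundary); a diffeomorphism taking $\gamma'$ to $\alpha_i$ exists by the change-of-coordinates principle, but that diffeomorphism is a priori an arbitrary mapping class, not yet known to lie in $G$, so writing $T_{\gamma'}$ as a conjugate of $\ft_i$ does not place it in $G$. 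The standard way to close this gap is to prove first that the subgroup generated by the Lickorish twists $\ft_i,\fr_i,\fn_j$ already acts transitively on isotopy classes of nonseparating simple closed curves, so the conjugator can be chosen inside $G$; that step is absent from your outline. Your bookkeeping for separating loops is also off: the pushoff $\delta_l''$ bounds an annulus containing the $l$-th boundary and the marked point $y_n$, so after uncapping its twist is $\fd_{l,n}$, which is expressible via the relation $[\fq_{l,n}] = [\fd_{l,n}\fd_l^{-1}\fd_n^{-1}]$ of Paragraph~\ref{MoreSpecDehn} together with the braidings, not via $\fz_l$ and $\fz_{l-1}$. Finally, with this paper's conventions the capping sequence of Proposition~\ref{CapSeq} has domain $\Gamma_{g,n}(\rho_n)$, not $\Pu\Gamma_{g,n}$; the forgetful map $\Gamma_{g,n}(\rho_n)\to\Gamma_{g,n}$ is neither injective nor does its image lie in $\Pu\Gamma_{g,n}$, so the exact sequences as you have written them need to be corrected before they can be chained together.
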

We can assume without loss of generality that the curve~$\partial_j$ does not only avoid the~$n$~boundary components, but also the base point~$x$. Then the corresponding Dehn twist~$\fd_j$ is contained in~$\Gamma_{g,n}(x)$ and therefore can act on~$\pi_1(\Sigma_{g,n},x)$, as explained in Paragraph~\ref{FundGroup}. Since any homotopy class in~$\pi_1(\Sigma_{g,n},x)$ can be represented by a curve that does not intersect~$\partial_j$, this action is trivial. However, the curves $\alpha_i$, $\beta_i$, and~$\mu_i$ start and end at the base point~$x$, and as explained in Paragraph~\ref{FundGroup}, it makes a difference for the action on~$\pi_1(\Sigma_{g,n},x)$ whether the curves meet before or after the base point~$x$. We will now make specific choices for these curves. For $\ft_i = \fd_{\alpha_i}$, we choose $\ft'_i \deq \fd_{\alpha'_i}$, for $\fr_i = \fd_{\beta_i}$, we choose $\fr''_i \deq \fd_{\beta''_i}$, and for $\fn_i = \fd_{\mu_i}$, we choose $\fn''_i \deq \fd_{\mu''_i}$. For~$\fs_i$, we mix the choices and define $\fs'_i \deq \ft_i'^{-1} \fr_i''^{-1} \ft_i'^{-1}$. These maps act on the generators of the fundamental group as follows:
\begin{Proposition} \label{MapClassCyc1}
Suppose that $i \le g$. \vspace{-10pt}
\begin{enumerate}
\item 
For $j \le n$, we have $\ft'_i([\delta_j]) = \fr''_i([\delta_j]) = [\delta_j]$. If $i \neq g$, we also have $\fn''_i([\delta_j]) = [\delta_j]$.

\item 
We have $\ft'_i([\beta_i]) = [\beta_i \alpha_i]$ and $\ft'_i([\alpha_i]) = [\alpha_i]$. For $j \neq i$, we have $\ft'_i([\beta_j]) = [\beta_j]$ and $\ft'_i([\alpha_j]) = [\alpha_j]$.

\item 
We have $\fr''_i([\alpha_i]) = [\alpha_i \beta_i^{-1}]$ and $\fr''_i([\beta_i]) = [\beta_i]$. For $j \neq i$, we have $\fr''_i([\alpha_j]) = [\alpha_j]$ and $\fr''_i([\beta_j]) = [\beta_j]$.

\item 
For $i=1,\ldots, g-1$, we have $\fn''_i([\alpha_i]) = [\alpha_i]$ as well as
\[\fn''_i([\beta_i]) = [\mu_i \beta_i], \quad 
\fn''_i([\alpha_{i+1}]) = [\mu_i \alpha_{i+1} \mu_i^{-1}], \quad 
\fn''_i([\beta_{i+1}]) = [\beta_{i+1} \mu_i^{-1}].\]
For $j \neq i$ and $j \neq i+1$, we have $\fn''_i([\alpha_j]) = [\alpha_j]$ and $\fn''_i([\beta_j]) = [\beta_j]$. 

\item 
We have
$\fs'_i([\alpha_i]) = [\alpha_i \beta_i \alpha_i^{-1}]$ and $\fs'_i([\beta_i]) = [\alpha_i^{-1}]$. 
For $j \neq i$, we have that \mbox{$\fs'_i([\alpha_j]) = [\alpha_j]$} and $\fs'_i([\beta_j]) = [\beta_j]$.
\end{enumerate}
\end{Proposition}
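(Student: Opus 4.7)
The proof is a case-by-case application of the tables at the end of Paragraph~\ref{FundGroup} that encode the action of a Dehn twist on a homotopy class in terms of two data: the algebraic intersection number of the two curves at the base point~$x$ (if they are transverse), or the tangent configuration (if they are not), together with the side~$\gamma'$ or~$\gamma''$ on which the push-off is taken. For each claimed equality, the recipe is to represent the homotopy class being acted on by a convenient curve, recognize how it meets the Dehn-twist curve near~$x$, and read the answer off the corresponding entry of the table.

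Part~(1) I would handle by disjointness: the class $[\delta_j]$ has a representative supported in a neighborhood of the $j$-th boundary component, while $\alpha'_i$ and $\beta''_i$ lie in a tubular neighborhood of the $i$-th handle (pushed off~$x$), and $\mu''_i$ with $i<g$ lies near the tube joining the $i$-th and $(i+1)$-st handles; these can all be made disjoint from the chosen representative of~$\delta_j$. For parts~(2) and~(3), the curves $\alpha_i$ and $\beta_i$ cross transversely exactly once, at~$x$; a careful reading of the polygon-model picture---remembering the orientation reversal mentioned at the end of Paragraph~\ref{Surf}---yields $i_A(\beta_i,\alpha_i)=+1$, hence $i_A(\alpha_i,\beta_i)=-1$. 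The table then produces $\ft'_i([\beta_i])=[\beta_i\alpha_i]$ and $\fr''_i([\alpha_i])=[\alpha_i\beta_i^{-1}]$, while the triviality assertions follow either by representing a class by the opposite push-off (for $\alpha_i$ under $\ft'_i$, and for $\beta_i$ under $\fr''_i$) or by choosing disjoint representatives on a different handle (for $\alpha_j,\beta_j$ with $j\neq i$).

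Part~(4) is the main obstacle. The curve $\mu_i=\alpha_{i+1}^{-1}\beta_i\alpha_i\beta_i^{-1}$ is homotopic rel~$x$ to the small loop drawn in Paragraph~\ref{FundGroupGen}, which encircles the tube between the $i$-th and $(i+1)$-st handles. Starting from that picture, I would identify four local configurations at~$x$: $\mu_i$ is disjoint from~$\alpha_i$; it crosses $\beta_i$ transversely once with $i_A(\beta_i,\mu_i)=+1$; it crosses $\beta_{i+1}$ transversely once with $i_A(\beta_{i+1},\mu_i)=-1$; and it is non-transverse to~$\alpha_{i+1}$ at~$x$ in the tangent configuration for which the $\gamma''$-push-off produces the conjugation formula $[\gamma\kappa\gamma^{-1}]$. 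Feeding these inputs into the corresponding entries of the transverse and non-transverse tables gives the four stated formulas, while triviality for $j\notin\{i,i+1\}$ is again by disjointness on a different handle. The delicate part of the whole argument is precisely this geometric identification of the four configurations with the correct signs; once it is in place, the rest is mechanical.

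Part~(5) then follows from parts~(2) and~(3) by a direct computation: applying $\fs'_i=(\ft'_i)^{-1}(\fr''_i)^{-1}(\ft'_i)^{-1}$ to $[\alpha_i]$ and to~$[\beta_i]$ in three steps, using the formulas already established together with their inverses (obtained by solving a short equation in the free group, for instance $(\ft'_i)^{-1}([\beta_i])=[\beta_i\alpha_i^{-1}]$ and $(\fr''_i)^{-1}([\alpha_i])=[\alpha_i\beta_i]$), yields the stated expressions $[\alpha_i\beta_i\alpha_i^{-1}]$ and $[\alpha_i^{-1}]$. The action on $[\alpha_j],[\beta_j]$ for $j\neq i$ is trivial because each of the three factors already acts trivially there.
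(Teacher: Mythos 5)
Your proposal is correct and follows essentially the same route as the paper: disjointness for the trivial cases, the transverse table with $i_A(\beta_i,\alpha_i)=+1$ for parts (2) and (3), the four local configurations (disjointness from $\alpha_i$, transversal crossings with $\beta_i$ and $\beta_{i+1}$ with the stated signs, and the non-transverse second picture for $\alpha_{i+1}$) for part (4), and the three-step free-group computation for part (5). The only cosmetic difference is that the paper simply observes $\ft'_i([\alpha_i])=[\alpha_i]$ and $\fr''_i([\beta_i])=[\beta_i]$ directly (a Dehn twist fixes its core curve), whereas you derive it from the non-transverse table entry via the opposite push-off; both are valid.
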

\begin{proof}
\begin{parlist}
\item 
We first note that~$\alpha_i$ and~$\beta_i$ are oriented so that~$\alpha'_i$ and~$\beta''_i$ arise by pushing~$\alpha_i$ and~$\beta_i$ off the base point~$x$ in the direction of the $i$th attached torus, so that we could cut the $i$th attached torus off again in such a way that the base point~$x$ on the one hand and $\alpha'_i$ and $\beta''_i$ on the other hand would lie on different connected components. If $j \neq i$, this implies that the curves $\alpha'_i$ and $\beta''_i$ do not intersect the curves~$\alpha_j$ and~$\beta_j$. This implies that $\ft'_i([\beta_j]) = [\beta_j]$ and $\ft'_i([\alpha_j]) = [\alpha_j]$ as well as $\fr''_i([\alpha_j]) = [\alpha_j]$ and $\fr''_i([\beta_j]) = [\beta_j]$. For the same reason, $\ft'_i$ and $\fr''_i$ preserve the homotopy class of~$\delta_j$.

\item
We have $i_A(\beta_i,\alpha_i) = -i_A(\alpha_i,\beta_i) = 1$, so that it follows from the table in Paragraph~\ref{FundGroup} that $\ft'_i([\beta_i]) = [\beta_i \alpha_i]$ and $\fr''_i([\alpha_i]) = [\alpha_i \beta_i^{-1}]$.  Clearly, we have $\ft'_i([\alpha_i]) = [\alpha_i]$ and~$\fr''_i([\beta_i]) = [\beta_i]$. Therefore, the second and the third assertions are now completely proved. 

\item
For the fourth assertion, we see from the discussion in Paragraph~\ref{FundGroupGen} that~$\mu_i$ is oriented in such a way that~$\mu''_i$ arises by pushing~$\mu_i$ off the base point~$x$ in the direction of the $i$th and the $(i+1)$st attached tori, so that we could cut these two tori off again in such a way that the base point~$x$ and $\mu''_i$ would lie on different connected components. For $j  \neq i$ and $j \neq i+1$, this implies that $\mu''_i$ does not intersect~$\alpha_j$ or~$\beta_j$, so that $\fn''_i([\alpha_j]) = [\alpha_j]$ and $\fn''_i([\beta_j]) = [\beta_j]$. This discussion also shows that~$\mu''_i$ does not intersect~$\alpha_i$, so that $\fn''_i([\alpha_i]) = [\alpha_i]$. On the other hand, $\mu_i$ intersects~$\beta_i$ and~$\beta_{i+1}$ exactly once, the intersection is transversal, and the intersection numbers are $i_A(\beta_i, \mu_i) = 1$ and $i_A(\beta_{i+1}, \mu_i) = -1$, respectively. It therefore follows from the table in Paragraph~\ref{FundGroup} that
\[\fn''_i([\beta_i]) = [\mu_i \beta_i] \quad \text{and} \quad 
\fn''_i([\beta_{i+1}]) = [\beta_{i+1} \mu_i^{-1}].\]
In contrast, the curves $\mu_i$ and $\alpha_{i+1}$ do not intersect transversally, but rather as in the second picture in Paragraph~\ref{FundGroup}, so that 
$\fn''_i([\alpha_{i+1}]) = [\mu_i \alpha_{i+1} \mu_i^{-1}]$. Since $i \neq g$, the curve $\mu''_i$ does not intersect~$\delta_j$, so that~$\fn''_i([\delta_j]) = [\delta_j]$. 

\item
It remains to show the fifth assertion. For $j \neq i$, the claim follows easily from the second and the third assertion. For the case $j=i$, we argue as follows: By inverting part of the third assertion, we have
$\fr_i''^{-1}([\alpha_i \beta_i^{-1}]) = [\alpha_i]$, which implies 
that~$\fr_i''^{-1}([\alpha_i]) [\beta_i]^{-1} = \fr_i''^{-1}([\alpha_i]) \fr_i''^{-1}([\beta_i]^{-1}) 
= [\alpha_i]$. This in turn yields
\[\fs'_i([\alpha_i]) = (\ft_i'^{-1} \fr_i''^{-1} \ft_i'^{-1})([\alpha_i])
= (\ft_i'^{-1} \fr_i''^{-1})([\alpha_i]) = \ft_i'^{-1}([\alpha_i \beta_i])
= \ft_i'^{-1}([\alpha_i]) \ft_i'^{-1}([\beta_i]).\]
A second inversion yields $\ft_i'^{-1}([\beta_i]) = [\beta_i \alpha_i^{-1}]$, so that \[\fs'_i([\alpha_i]) = [\alpha_i] \ft_i'^{-1}([\beta_i]) = [\alpha_i \beta_i \alpha_i^{-1}]\] as asserted. The formula for~$\fs'_i([\beta_i])$ follows from a similar computation. 
\qedhere
\end{parlist}
\end{proof}

\subsection{Dehn twists related to two boundary components} \label{MoreSpecDehn}
\enlargethispage{0.5cm}
According to our conventions in Paragraph~\ref{Dehn}, the curve $\delta_i \delta_j$, for~$i < j$, cannot be used to define a Dehn twist for two reasons: On the one hand, it intersects the boundary, and on the other hand, it is not a simple closed curve. The first problem can be addressed by moving it slightly to the interior with the help of a collar, in the same way as in our treatment of~$\partial_j$ at the end of Paragraph~\ref{Dehn}. As a consequence of the orientation of~$\rho_i$ and~$\rho_j$, a movement to the interior is a movement to the right of the given curve.

The second problem arises not only from the fact that the curve returns to the base point~$x$ at the time of concatenation, when~$\delta_i$ ends and~$\delta_j$ begins, but also from the fact that the paths~$\xi_i$ and~$\xi_j$ are both traced out twice, in opposite directions. To address this problem, we consider the curve~$\gamma_{i,j}$  defined in Paragraph~\ref{Braid}, which does not intersect the boundary, starts and ends in~$x$, and represents the same relative homotopy class as $\delta_i \delta_j$ in the fundamental group~$\pi_1(\Sigma_{g,n},x)$. Passing to~$\gamma_{i,j}'$ as indicated in Paragraph~\ref{FundGroup}, we in addition avoid the base point~$x$. In contrast to~$\delta_i \delta_j$, the modification~$\gamma_{i,j}'$ is a simple closed curve that can be used to define a Dehn twist.
This curve is shown in the picture that appears in the proof below. We denote the Dehn twist along~$\gamma_{i,j}'$ by~$\fd_{i,j}$.

As explained in \cite[Par.~5.1.1, p.~118f]{FM} or \cite[\S~7, p.~63]{PS}, the Dehn twist~$\fd_{i,j}$ is related to the braiding introduced in Paragraph~\ref{Braid}: If we define the double braiding as $\fq_{i,j} \deq \fb_{j,i} \fb_{i,j}$, it is related to our Dehn twist via the formula
\[ [\fq_{i,j}] = [\fd_{i,j} \fd_i^{-1} \fd_{j}^{-1}]. \]
This formula holds because the Dehn twist~$\fd_{i,j}$ not only interchanges the $i$th and the $j$th boundary components twice, but also twists these boundary components themselves, while the double braiding moves the boundary components in a parallel fashion, without introducing a twist. 

The action of~$\fq_{i,j}$ on the generators of the fundamental group is given as follows:
\begin{Proposition} \label{MapClassCyc2}
Suppose that $1 \le i < j \le n$. \vspace{-8pt}
\begin{enumerate}
\item 
We have \[\fq_{i,j}([\delta_i]) = [(\delta_i \delta_j)^{-1} \delta_i (\delta_i \delta_j)] 
= [\delta_j^{-1} \delta_i \delta_j] \quad \text{and} \quad 
\fq_{i,j}([\delta_j]) = [(\delta_i \delta_j)^{-1} \delta_j (\delta_i \delta_j)].\]

\item
If $l \neq i$ and $l \neq j$, we have 
$\fq_{i,j}([\delta_l]) = [\delta_l]$.

\item
The double braiding $\fq_{i,j}$ acts trivially on the homotopy classes of
$\alpha_1,\ldots,\alpha_g$ and $\beta_1,\ldots,\beta_g$ in $\pi_1(\Sigma_{g,n},x)$.
\end{enumerate}
\end{Proposition}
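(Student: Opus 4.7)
The plan is to exploit the relation $[\fq_{i,j}] = [\fd_{i,j} \fd_i^{-1} \fd_j^{-1}]$ recorded immediately before the statement, which reduces the computation of the action of $\fq_{i,j}$ on $\pi_1(\Sigma_{g,n},x)$ to that of $\fd_{i,j}$. Indeed, as already observed at the end of Paragraph~\ref{SpecDehn}, both Dehn twists $\fd_i$ and $\fd_j$ act trivially on the fundamental group, because $\partial_i$ and $\partial_j$ can be chosen inside small collar neighborhoods of the respective boundary components and every relative homotopy class admits a representative avoiding these collars. It therefore suffices to compute $\fd_{i,j}([\kappa])$ for each generator $\kappa$.

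For parts~(2) and~(3), I would observe that the curve $\gamma'_{i,j}$ lies in the union of the collars around the $i$-th and $j$-th boundary components together with a thin tubular neighborhood of the arcs of $\xi_i$ and $\xi_j$ used in the construction of $\gamma_{i,j}$ in Paragraph~\ref{Braid}. Since $\fd_{i,j}$ is supported in an annular neighborhood of $\gamma'_{i,j}$, any generator that admits a representative disjoint from that neighborhood is fixed. This clearly applies to $\delta_l$ for $l \neq i,j$, as well as to every $\alpha_k$ and $\beta_k$, since the latter can be chosen to lie on the handles of $\Sigma_{g,n}$ far from the boundary components.

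For part~(1), I would compute $\fd_{i,j}([\delta_i])$ and $\fd_{i,j}([\delta_j])$ directly from the Dehn twist formulas of Paragraph~\ref{FundGroup}. Near $x$, the curves $\delta_i$ and $\gamma_{i,j}$ both emanate along $\xi_i$ with parallel tangent vectors, so they do not meet transversely at $x$; after the push-off used to define $\gamma'_{i,j}$ they become disjoint from $\delta_i$, placing us in the non-transversal case at the end of Paragraph~\ref{FundGroup}. The same holds for $\delta_j$ at the $\xi_j$-end of $\gamma_{i,j}$. Reading off the corresponding formula there yields
\[
\fd_{i,j}([\delta_i]) = [\gamma_{i,j}^{-1}\, \delta_i\, \gamma_{i,j}] \quad \text{and} \quad \fd_{i,j}([\delta_j]) = [\gamma_{i,j}^{-1}\, \delta_j\, \gamma_{i,j}].
\]
Since $[\gamma_{i,j}] = [\delta_i \delta_j]$ in $\pi_1(\Sigma_{g,n},x)$, as recalled at the beginning of Paragraph~\ref{MoreSpecDehn}, substitution gives the stated formulas; the elementary simplification $(\delta_i \delta_j)^{-1} \delta_i (\delta_i \delta_j) = \delta_j^{-1} \delta_i \delta_j$ accounts for the alternative expression in~(1).

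The main obstacle is the orientation bookkeeping: one has to verify that the side on which $\gamma_{i,j}$ is pushed off to produce $\gamma'_{i,j}$, combined with the chosen orientation of $\gamma_{i,j}$, leads to conjugation by $\gamma_{i,j}^{-1}\,\cdot\,\gamma_{i,j}$ rather than its inverse, and that this sign is the same at the $\xi_i$- and $\xi_j$-ends. This is a pictorial check using the conventions fixed in Paragraphs~\ref{Braid} and~\ref{FundGroup}, and once settled it determines the sign in the formulas of part~(1) unambiguously.
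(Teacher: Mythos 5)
Your reduction to $\fd_{i,j}$ via the relation $[\fq_{i,j}] = [\fd_{i,j}\,\fd_i^{-1}\,\fd_j^{-1}]$ and the triviality of $\fd_i,\fd_j$ matches the paper, and so does your treatment of part~(1): tangency at~$x$ followed by the push-off leads to the conjugation formula from the first non-transversal case of Paragraph~\ref{FundGroup}, exactly as the paper argues.

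However, your argument for part~(2) has a genuine gap when $i<l<j$. Your description of the support of the twist is incomplete: $\gamma_{i,j}$ does not lie only near the collars of the $i$-th and $j$-th boundary components and near $\xi_i$ and $\xi_j$. The construction in Paragraph~\ref{Braid} has a transitioning arc near~$x$, where the curve leaves the neighbourhood of $\xi_i$ and crosses over to the neighbourhood of $\xi_j$; in the ordered polygon model this arc must cross every $\xi_l$ with $i<l<j$. Consequently the chosen representative $\delta_l=\xi_l\rho_l\xi_l^{-1}$ intersects $\gamma'_{i,j}$ four times (twice on the way out and twice on the way back), and a disjointness argument simply does not apply. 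The paper handles exactly this case with an explicit cancellation argument, tracking how the four encounters of $\fd_{i,j}(\delta_l)$ with $\gamma'_{i,j}$ pair off up to relative homotopy. Replacing $\delta_l$ by some other loop around boundary $l$ disjoint from $\gamma'_{i,j}$ is also not immediate: such a loop is $\sigma\rho_l\sigma^{-1}$ for a different connecting path $\sigma$, and it need not lie in the same based homotopy class as $\delta_l$; arguing that it does would essentially require the conclusion you are trying to prove. The cases $l<i$ and $l>j$ are fine, and the paper treats them with the second non-transversal picture from Paragraph~\ref{FundGroup} rather than a neighbourhood-support argument; the same remark applies to part~(3), where your conclusion is right but the reason is that $\alpha_k$, $\beta_k$ leave $x$ into a different sector than the $\xi$'s, so that after the push-off $\gamma'_{i,j}$ avoids them, not that they are ``far from the boundary.''
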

\pagebreak
\begin{proof}
We have already seen in Paragraph~\ref{SpecDehn} that the Dehn twists $\fd_i$ and~$\fd_{j}$ act trivially, so that the action of $\fq_{i,j}$ coincides with the action of~$\fd_{i,j}$. If $l=i$ or $l=j$, then $\gamma_{i,j}$ and~$\delta_l$ intersect only in~$x$, and the intersection is described by the first picture at the end of Paragraph~\ref{FundGroup}. Therefore, the discussion of the first case there yields that
$\fd_{i,j}([\delta_l]) = [\gamma_{i,j}^{-1} \delta_l \gamma_{i,j}]$. Because~$\gamma_{i,j}$ is homotopic 
to~$\delta_i \delta_j$ relative to the base point~$x$, this proves the first assertion.

If $l<i$ or $l>j$, the generator~$\delta_l$ also intersects~$\gamma_{i,j}$ only in~$x$, but now the intersection is described by the second picture at the end of Paragraph~\ref{FundGroup}, so that~$\gamma_{i,j}'$ does not intersect~$\delta_l$, which implies the second assertion in these cases. The case $i < l < j$ is more complicated and is illustrated by the picture
\begin{center} 
\fontsize{10}{12}\selectfont
\begin{tikzpicture}
\node[above] (v1) at (0,0) {$x$}; 

\draw[violet,fill=lightgray] (-3,-3)  circle (10pt) node {1} ;
\draw[violet,fill=lightgray] (-1.5,-3)  circle (10pt) node {$i$} ;
\draw[violet,fill=lightgray] (0,-3) circle (10pt) node {$l$} ;
\draw[violet,fill=lightgray] (1.5,-3) circle (10pt) node {$j$} ;
\draw[violet,fill=lightgray] (3,-3)  circle (10pt) node {$n$} ;

\node[scale=0.6] at (-2.3,-3) {\dots};
\node[scale=0.6] at (-0.7,-3) {\dots};
\node[scale=0.6] at (0.7,-3) {\dots};
\node[scale=0.6] at (2.3,-3) {\dots};

\draw[gray,decoration={ markings, mark=at position 0.3 with {\arrow[>=stealth]{>}}}, postaction={decorate}] (0,0) to[out=262,in=85] (-0.48,-3) to[out=265,in=180] (0,-3.48) to[out=0,in=275] (0.48,-3) to[out=95,in=278] (0,0);

\draw[gray,decoration={ markings, mark=at position 0.333 with {\arrow[>=stealth]{>}}}, postaction={decorate}] (0,0) to[out=232,in=60] (-1.94,-2.8) to[out=240,in=160] (-1.72,-3.41) to[out=340,in=248] (-1.06,-3.2) to[out=68,in=253] (0,0);

\draw[gray,decoration={ markings, mark=at position 0.285 with {\arrow[>=stealth]{>}}}, postaction={decorate}] (0,0) to[out=292,in=112] (1.06,-3.2) to[out=292,in=200] (1.72,-3.41) to[out=20,in=300] (1.94,-2.8) to[out=120,in=308] (0,0);

\draw[darkgray,decoration={ markings, mark=at position 0.11 with {\arrow[>=stealth]{>}}}, postaction={decorate}] (0,-0.2) to[out=180,in=52] (-0.4,-0.4) to[out=232,in=60] (-2.02,-2.76) to[out=240,in=160] (-1.77,-3.49) to[out=340,in=248] (-0.98,-3.24) to[out=68,in=253] (-0.17,-0.7) to[out=73,in=180] (0,-0.6) to[out=0,in=107] (0.17,-0.7) to[out=287,in=112] (0.98,-3.24) to[out=292,in=200] (1.77,-3.49) to[out=20,in=300] (2.02,-2.76) to[out=120,in=308] (0.4,-0.4) to[out=128,in=0] (0,-0.2);

\node[darkgray] at (-1.5,-1.4) {$\gamma_{i,j}'$};
\node[gray] at (-1.35,-2.25) {$\delta_i$};
\node[gray] at (-0.15,-2.25) {$\delta_l$};
\node[gray] at (1.0,-2.25) {$\delta_j$};
\end{tikzpicture}
\end{center}

in which we have replaced the curves~$\delta_i$, $\delta_l$, and~$\delta_j$ by slight modifications within their relative homotopy class. It shows that the curve 
$\fd_{i,j}(\delta_l)$ comes about as follows: It starts at~$x$ with~$\delta_l$, but then soon meets~$\gamma_{i,j}'$. At that point, it turns left and follows~$\gamma_{i,j}'$ against its orientation. Upon returning, it follows~$\delta_l$ again, but soon encounters~$\gamma_{i,j}'$ a second time. Again, it turns left, but now follows~$\gamma_{i,j}'$ in the direction of its orientation. Up to relative homotopy, these two encounters with~$\gamma_{i,j}'$ cancel each other. While continuing along~$\delta_l$, the curve has two similar encounters with~$\gamma_{i,j}'$, which again cancel each other. In summary, the curve is homotopic to~$\delta_l$, which finishes the proof of the second assertion. 

The curves $\alpha_i$ and $\beta_i$ intersect~$\gamma_{i,j}$ again only in~$x$, and the intersection is again described by the second picture at the end of Paragraph~\ref{FundGroup}, so that~$\gamma_{i,j}'$ does not intersect them at all. This yields the third assertion. 
\end{proof}

\subsection{The capping homomorphism} \label{Cap}
In the process of defining~$\Sigma_{g,n}$, we have removed $n$~open disks from~$\Sigma_{g}$. If we assume that our surface is realized via the polygon model, the boundary components come with a natural enumeration, and the $j$th boundary component is parametrized by~$\rho_j$. In the place of the $j$th removed disk, we now glue a punctured disk back, i.e., a disk whose interior contains a marked point denoted by~$y$. If a homeomorphism of~$\Sigma_{g,n}$ restricts to the identity on the image of~$\rho_j$, we can extend it to~$\Sigma_{g,n-1}$ by requiring that the extension restricts to the identity on the newly inserted punctured disk. In this way, we obtain a group homomorphism
\[C_j\colon \Gamma_{g,n}(\rho_j) \to \Gamma_{g,n-1}(y)\]
which we call the $j$th capping homomorphism. Here, we have not only written $\Gamma_{g,n-1}(y)$ for~$\Gamma_{g,n-1}(\{y\})$, as indicated in Paragraph~\ref{MapClGr}, but also 
briefly~$\Gamma_{g,n}(\rho_j)$ for~$\Gamma_{g,n}(\Img(\rho_j))$.

It is a consequence of the Alexander lemma (cf.~\cite[Sec.~2.2, Lem.~2.1, p.~47f]{FM}) that~$\fd_j$ is contained in the kernel of~$C_j$. The following result, which can be found in~\cite[Sec.~3.6, Prop.~3.19, p.~85]{FM} or~\cite[Sec.~3, p.~104f]{Ko}, states that~$\fd_j$ in fact generates the kernel:
\begin{Proposition} \label{CapSeq}
The sequence
\begin{equation*} \label{CapSeqEq}
1 \longrightarrow \langle \fd_{j} \rangle \longrightarrow \Gamma_{g,n}(\rho_j) \overset{C_j}{\longrightarrow} \Gamma_{g,n-1}(y) \longrightarrow 1
\end{equation*}
is short exact.
\end{Proposition}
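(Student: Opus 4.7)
Four things must be checked: (a)~$\fd_j \in \ker C_j$; (b)~$\fd_j$ has infinite order in $\Gamma_{g,n}(\rho_j)$, so that the leftmost map is injective; (c)~$C_j$ is surjective; (d)~$\ker C_j \subseteq \langle \fd_j \rangle$. The first three are routine, while (d) is the main obstacle.

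For (a), the twist map defining $\fd_j$ is supported in a thin annular collar of $\partial_j$, and after capping this annulus sits inside the glued-in disk and can be arranged to avoid~$y$; the Alexander lemma then provides an isotopy of the twist to the identity rel the outer boundary of that disk and rel~$y$. For (b), any Dehn twist along an essential simple closed curve has infinite order, as visible, for instance, from its non-trivial action on the fundamental group computed in Proposition~\ref{MapClassCyc1}. For (c), given a representative $\psi \in \Diffeo^+(\Sigma_{g,n-1})$ with $\psi(y)=y$, one isotopes $\psi$ rel~$y$ so that it becomes the identity on a closed disk neighborhood of~$y$: the space of orientation-preserving diffeomorphisms of the disk fixing the origin deformation retracts onto $\mathrm{SO}(2)$ through rotations, and rotations contract to the identity rel origin. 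Choosing this neighborhood to coincide with the capping disk of the $j$-th boundary, the restriction of the isotoped $\psi$ to $\Sigma_{g,n}$ is a preimage of~$[\psi]$ in $\Gamma_{g,n}(\rho_j)$.

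The main obstacle is (d), and my approach is to use the evaluation fibration
\[\Diffeo^+(\Sigma_{g,n-1},\bar D_j) \;\hookrightarrow\; \Diffeo^+(\Sigma_{g,n-1},y) \;\xrightarrow{\;r\;}\; \mathrm{Emb}^+(\bar D_j,\Sigma_{g,n-1};y),\]
where $\bar D_j$ denotes the closed capping disk, $r$ is restriction to $\bar D_j$, and the base is the space of orientation-preserving embeddings of $\bar D_j$ into $\Sigma_{g,n-1}$ sending the origin to~$y$. That $r$ is a Serre fibration follows from the isotopy extension theorem; the fiber is identified with $\Diffeo^+(\Sigma_{g,n},\rho_j)$ by removing the interior of $\bar D_j$; and the $1$-jet at the origin provides a homotopy equivalence of the base with $\GL_2^+(\R) \simeq S^1$ (the space of embeddings with prescribed $1$-jet at the origin being contractible). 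The tail of the long exact sequence of homotopy groups then reads
\[\pi_1\bigl(\mathrm{Emb}^+(\bar D_j,\Sigma_{g,n-1};y)\bigr) \;\xrightarrow{\;\partial\;}\; \Gamma_{g,n}(\rho_j) \;\xrightarrow{\;C_j\;}\; \Gamma_{g,n-1}(y) \;\longrightarrow\; 0,\]
recovering (c) and reducing (d) to the identification $\mathrm{im}\,\partial = \langle\fd_j\rangle$. This last step is the geometric heart of the argument: a generator of $\pi_1 \cong \Z$ is the loop $t \mapsto (\text{rotation of } \bar D_j \text{ by } 2\pi t \text{ around } y)$, and lifting it to a path in $\Diffeo^+(\Sigma_{g,n-1},y)$ by realizing the rotation inside $\bar D_j$ and damping it to the identity along a collar just outside $\rho_j$ produces, at $t=1$, exactly the Dehn twist $\fd_j$. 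This identifies $\mathrm{im}\,\partial$ with $\langle \fd_j \rangle$ and completes the argument.
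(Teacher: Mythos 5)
The paper does not prove Proposition~\ref{CapSeq}; it only observes that $\fd_j \in \Ker C_j$ follows from the Alexander lemma and refers to Farb--Margalit and Korkmaz for the rest. Your fibration argument in step~(d) --- the evaluation fibration to the space of embedded disks centered at~$y$, the identification of that base with $\GL_2^+(\R) \simeq S^1$ via the $1$-jet, and the explicit lift of the rotation loop whose time-one map is $\fd_j$ --- is correct and matches the standard derivation of the capping sequence via the space of embedded disks that underlies the cited references. Steps~(a) and~(c) are fine as well.

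Step~(b), however, contains two inaccuracies. The justification offered is wrong: Proposition~\ref{MapClassCyc1} describes the action of $\ft_i$, $\fr_i$, $\fn_i$ and $\fs_i$ on the fundamental group, not of $\fd_j$, and the paper points out in Paragraph~\ref{SpecDehn} that $\fd_j$ in fact acts \emph{trivially} on $\pi_1(\Sigma_{g,n},x)$, so this action cannot detect its order. Moreover, as the paper notes immediately after the proposition, $\fd_j$ is trivial in the degenerate case $g=0$, $n=1$, so the blanket claim that it has infinite order is false. Neither point affects exactness, since $\langle \fd_j \rangle \hookrightarrow \Gamma_{g,n}(\rho_j)$ is injective by definition of the subgroup $\langle \fd_j \rangle$, and in the degenerate case all three groups in the sequence are trivial; step~(b) is simply superfluous and should be dropped.
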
 

Almost always, $\fd_{j} \in \Gamma_{g,n}(\rho_j)$ is a nontrivial mapping class, in which case $\langle \fd_{j} \rangle$ is isomorphic to~$\Z$. The only exception is the case $g=0$ and~$n=1$: Here, it is easy to see that $\Gamma_{0,0}(y)$ is trivial (cf.~\cite[p.~49]{FM}), and~$\Gamma_{0,1}$ is trivial by the Alexander lemma. 

Clearly, we can compose $C_j$ with the forgetful map~$F_y\colon \Gamma_{g,n-1}(y) \to \Gamma_{g,n-1}$ to obtain a homomorphism $D_j \deq F_y \circ C_j$ from $\Gamma_{g,n}(\rho_j)$ to $\Gamma_{g,n-1}$
that caps off the boundary component not with a punctured, but rather with a full disk. The effect of the additional mapping~$F_y$ will be studied in the next paragraph.

\subsection{The Birman sequence} \label{Birm}
Because we can always modify a diffeomorphism by isotopy so that it fixes a point, the forgetful map 
\[F_x \colon \Gamma_{g,n}(x) \to \Gamma_{g,n}\]
is surjective. If $\phi \in \Diffeo^+(\Sigma_{g,n})$ not only fixes~$x$, but in addition represents a mapping class in the kernel of~$F_x$, it can be connected to the identity in $\Diffeo^+(\Sigma_{g,n})$, i.e., there is a path
\[ [0,1] \to \Diffeo^+(\Sigma_{g,n}),~t \mapsto \phi_t\]
with $\phi_0 = \id$ and $\phi_1 = \phi$. Since~$\phi_t$ is not required to fix the point~$x$, we get a closed path
\[\gamma\colon [0,1] \to \Sigma_{g,n},~t \mapsto \phi_t(x)\]
based at~$x$, which represents a homotopy class in the fundamental group $\pi_1(\Sigma_{g,n},x)$.

Using the long exact sequence of a fibration (cf.~\cite[Thm.~11.48, p.~358]{Ro1}), one can show that the homotopy class of~$\gamma$ determines the mapping class of~$\phi$. More precisely, there is a group antihomomorphism
\[P_x \colon \pi_1(\Sigma_{g,n},x) \to \Gamma_{g,n}(x)\]
that maps the class of~$\gamma$ to the class of~$\phi$. This map is called the point-pushing map, or briefly the pushing map. The arising exact sequence
\begin{equation*}\label{eq_Birman sequence}
\pi_1(\Sigma_{g,n},x) \overset{P_x}{\longrightarrow} \Gamma_{g,n}(x) \overset{F_x}{\longrightarrow} \Gamma_{g,n} \longrightarrow 1
\end{equation*}
is called the Birman sequence. If the Euler characteristic of~$\Sigma_{g,n}$ is strictly negative, the Birman sequence is in fact short exact, i.e.,~$P_x$~is injective. The Birman sequence is discussed in greater detail in~\cite[Sec.~4.2, p.~96ff]{FM}.

As we have explained in Paragraph~\ref{FundGroup}, the mapping class group $\Gamma_{g,n}(x)$ acts on the fundamental group~$\pi_1(\Sigma_{g,n},x)$, and so in particular $P_x([\gamma])$ acts on the fundamental group. If~$P_x([\gamma]) = [\phi]$ and~$[\beta] \in \pi_1(\Sigma_{g,n},x)$, it can be shown that~$\phi(\beta)$, i.e., the closed path $t \mapsto  \phi(\beta(t))$, is homotopic to $\gamma^{-1} \beta \gamma$ relative to the basepoint~$x$, so that 
\[P_x([\gamma])([\beta]) = [\gamma^{-1} \beta \gamma].\]
In other words, $P_x([\gamma])$ acts on the fundamental group by an inner automorphism. The Birman sequence therefore implies that we have a homomorphism from~$\Gamma_{g,n}$ to $\Out(\pi_1(\Sigma_{g,n},x))$ that makes the following diagram commutative:
\begin{center}
\begin{tikzcd}
\Gamma_{g,n}(x) \arrow{r}\arrow[d, twoheadrightarrow, "F_x"']  & \Aut(\pi_1(\Sigma_{g,n},x))\arrow[d, twoheadrightarrow]\\
\Gamma_{g,n}\arrow{r} & \Out(\pi_1(\Sigma_{g,n},x))
\end{tikzcd}
\end{center}
Further aspects of this diagram are discussed in~\cite[Par.~8.2.7, p.~235]{FM}.
\pagebreak

The action of the mapping class group $\Gamma_{g,n}(x)$ on the fundamental group~$\pi_1(\Sigma_{g,n},x)$ is compatible with the pushing map in another way: Clearly, $\Gamma_{g,n}(x)$ acts on the kernel of the group homomorphism~$F_x$ by conjugation. This action is compatible with the pushing map in the sense that 
\[P_x([\psi(\gamma)]) = [\psi] P_x([\gamma]) [\psi]^{-1}\]
for $[\psi] \in \Gamma_{g,n}(x)$ (cf.~\cite[Par.~4.2.2, Fact~4.8, p.~99]{FM}). This relation is a direct consequence of our description above.

If a homotopy class in the fundamental group can be represented by a simple closed curve~$\gamma$, there is a formula for its image under the pushing map in terms of Dehn twists: Using the notation from Paragraph~\ref{FundGroup}, we have
\[P_x([\gamma]) = [\fd_{\gamma'} \fd_{\gamma''}^{-1}]\]
(cf.~\cite[Par.~4.2.2, Fact~4.7, p.~99]{FM}).

\subsection{Singular homology} \label{Homol}
If we think of a closed curve as being defined on the interval~$[0,1]$, it can be considered as a singular cycle. In this way, we obtain a group homomorphism
\[\pi_1(\Sigma_{g,n},x) \to H_1(\Sigma_{g,n}, \Z)\]
from the fundamental group to the first singular homology group that is called the (first) Hurewicz homomorphism. Hurewicz' theorem asserts in this situation that the Hurewicz homomorphism is surjective and induces an isomorphism from the commutator factor group of the fundamental group to the first singular homology group (cf.~\cite[Thm.~4.29, p.~83]{Ro1}). 

It follows from its universal property that the commutator factor group of a finitely presented group is presented by the same relations, but now understood as a presentation of an abelian group. Therefore, Hurewicz' theorem and the presentations of the fundamental group discussed in Paragraph~\ref{FundGroupGen} together imply that $H_1(\Sigma_{g}, \Z)$ is a free abelian group of rank~$2g$ with the homology classes of the curves 
$\alpha_1, \beta_1, \alpha_2, \beta_2,  \ldots, \alpha_g, \beta_g$ as a basis. In the presence of boundary components, Hurewicz' theorem yields that $H_1(\Sigma_{g,n}, \Z)$ is generated by the homology classes of the curves $\alpha_1, \beta_1, \alpha_2, \beta_2, \ldots, \alpha_g, \beta_g, \delta_1, \ldots, \delta_n$ subject to the relation that the homology classes of the curves $\delta_1, \ldots, \delta_n$ sum up to zero. By solving for one of these generators, we see that the first homology group is free abelian of rank $2g+n-1$. We note that the homology class of~$\delta_j$ is equal to the homology class of~$\partial_j$, because these curves are freely homotopic.

The algebraic intersection number introduced in Paragraph~\ref{FundGroup} depends only on the homology classes of the curves involved. It therefore defines a skew-symmetric bilinear form on the abelian group~$H_1(\Sigma_{g,n},\Z)$. For the generators, we have on the one hand
\[i_A(\beta_i, \alpha_j) = -i_A(\alpha_j, \beta_i) = \delta_{i,j} \quad \text{and} \quad
i_A(\alpha_i, \alpha_j) = 0 = i_A(\beta_i, \beta_j)\]
(cf.~\cite[Par.~6.1.2, p.~165]{FM}), while on the other hand we have
$i_A(\delta_j, \gamma) = 0$ for every closed curve~$\gamma$, because its free homotopy class has a representative that does not intersect~$\delta_j$. This shows in particular that this bilinear form is nondegenerate if and only if~$n=0$.

Because the outer automorphism group of any group evidently acts on the corresponding commutator factor group, Hurewicz' theorem implies that the commutative square obtained in Paragraph~\ref{Birm} can be enlarged to the diagram
\begin{center}
\begin{tikzcd}
\Gamma_{g,n}(x) \arrow{r}\arrow[d, twoheadrightarrow, "F_x"']  & \Aut(\pi_1(\Sigma_{g,n},x))
\arrow[d, twoheadrightarrow] \\
\Gamma_{g,n} \arrow{r} \arrow[equal]{d} & \Out(\pi_1(\Sigma_{g,n},x)) \arrow[d] \\
\Gamma_{g,n} \arrow{r} & \Aut(H_1(\Sigma_{g,n},\Z))
\end{tikzcd}
\end{center}

It is immediate from the construction that the action of~$\Gamma_{g,n}$ on~$H_1(\Sigma_{g,n}, \Z)$ preserves the intersection form and therefore takes values in the symplectic group. The discussion in Paragraph~\ref{SpecDehn} and Paragraph~\ref{MoreSpecDehn} also implies how the special mapping classes introduced there act on the first homology group. The mapping classes~$[\fd_j]$ and the mapping classes~$[\fq_{i,j}]$ act as the identity on the entire first homology group. From Proposition~\ref{MapClassCyc1}, we see that the mapping classes~$[\ft_i]$, $[\fr_i]$, and~$[\fs_i]$ preserve the two subgroups generated by the homology classes of the curves $\alpha_1, \beta_1, \alpha_2, \beta_2, \ldots, \alpha_g, \beta_g$ on the one hand and the curves $\delta_1, \ldots, \delta_n$ on the other hand, and act as the identity on the second subgroup. As an abelian group, the first homology group is the direct sum of these two subgroups, and the first subgroup is free of rank~$2g$ on the given generating set. We can therefore represent the action on the first subgroup by matrices in~$\Sp(2g,\Z)$. It follows from Proposition~\ref{MapClassCyc1} that these matrix representations are block-diagonal with~$g$~blocks of $2 \times 2$-matrices. For~$\ft_i$, $\fr_i$, and~$\fs_i$, the blocks corresponding to the generators~$\alpha_i$ and $\beta_i$ are
\[
\ft \deq \begin{pmatrix}
1 & 1\\
0 & 1
\end{pmatrix}
\qquad
\fr \deq 
\begin{pmatrix}
1 & 0\\
-1 & 1
\end{pmatrix}
\qquad
\fs \deq \begin{pmatrix}
0 &-1\\
1 & 0
\end{pmatrix}
\]
respectively, while the other blocks are $2 \times 2$-identity matrices. For $i=1,\ldots,g-1$, the matrix representation of~$\fn_i$ is also block-diagonal, but now contains a $4 \times 4$-block
\[\fn \deq
\begin{pmatrix}
1 & 1 & 0 & -1\\
0 & 1 & 0 & 0\\
0 & -1 & 1 & 1\\
0 & 0 & 0 & 1
\end{pmatrix}\]
corresponding to the homology classes of the curves $\alpha_i$, $\beta_i$, $\alpha_{i+1}$, and~$\beta_{i+1}$.

\subsection{The mapping class group of the torus} \label{Torus}
The surface~$\Sigma_{1}$ is a torus. Because the defining relation of the fundamental group discussed in Paragraph~\ref{FundGroupGen} in this case states that the relative homotopy class of 
$\alpha_1 \beta_1 \alpha_{1}^{-1} \beta_1^{-1}$ is trivial, the fundamental group is abelian here, and therefore Hurewicz' theorem mentioned in Paragraph~\ref{Homol} yields that it is isomorphic to the first singular homology group. According to the Dehn-Lickorish theorem~\ref{DehnLick}, the Dehn twists~$[\ft_1]$ and~$[\fr_1]$, or alternatively~$[\ft_1]$ and~$[\fs_1] = [\ft_1^{-1} \fr_1^{-1} \ft_1^{-1}]$, generate the mapping class group~$\Gamma_{1}$. As we have just seen, the actions of these generators on the first homology group are, respectively, represented by the matrices
\[
\ft = \begin{pmatrix}
1 & 1\\
0 & 1
\end{pmatrix}
\qquad
\fr = 
\begin{pmatrix}
1 & 0\\
-1 & 1
\end{pmatrix}
\qquad
\fs = \begin{pmatrix}
0 &-1\\
1 & 0
\end{pmatrix}
\]
with respect to the basis consisting of the singular homology classes of $\alpha_1$ and~$\beta_1$. 
Because~$\Sp(2g,\Z) = \SL(2,\Z)$ if~$g=1$, the group homomorphism $\Gamma_{g,n} \to \Sp(2g,\Z)$ described in Paragraph~\ref{Homol} becomes in this situation a group homomorphism
\[\Gamma_{1} \to \SL(2,\Z)\]
that maps~$[\ft_1]$, $[\fr_1]$, and~$[\fs_1]$ to~$\ft$, $\fr$, and~$\fs$, respectively. It is a classical fact that the modular group $\SL(2,\Z)$ is generated by the matrices~$\fs$ and~$\ft$ and that the relations \mbox{$\fs^4=1$} and $\fs \ft \fs = \ft^{-1} \fs \ft^{-1}$ that they satisfy are defining (cf.~\cite[Thm.~A.2, p.~312]{KT}). These relations imply that $\fs^2$ is central, which can also easily be seen directly. Alternatively, the modular group is generated by the elements~$\fr$ and~$\ft$, and the defining relations for~$\fs$ and~$\ft$ just stated translate into the defining relations $\ft \fr \ft = \fr \ft \fr$ and $(\fr\ft)^6 = 1$ for the generators~$\fr$ and~$\ft$ (cf.~\cite[Prop.~1.1, p.~7]{SZ}). This implies immediately that our group homomorphism $\Gamma_{1} \to \SL(2,\Z)$ is surjective, a fact that is also a special case of a more general result for mapping class groups (cf.~\cite[Par.~6.3.2, Thm.~6.4, p.~170]{FM}). In the case of the torus, the homomorphism is even bijective (cf.~\cite[Par.~2.2.4, Thm.~2.5, p.~53]{FM}), so that $\Gamma_{1} \cong \SL(2,\Z)$.

To discuss the mapping class group~$\Gamma_{1,1}$, we need to introduce the braid group~$B_3$ on three strands, which we define as the group generated by two generators~$r$ and~$t$ subject to the one defining relation \mbox{$rtr = trt$}. We will refer to this relation as the braid relation. Geometrically, $r$ can be interpreted as the interchange of the first two strands, while~$t$ can be interpreted as the interchange of the last two strands (cf.~\cite[Sec.~9.2, p.~246f]{FM}). In view of the second presentation of the modular group given above, there is a surjective group homomorphism
\[B_3 \to \SL(2,\Z)\]
that maps~$r$ to~$\fr$ and~$t$ to~$\ft$. If we define $s \deq (trt)^{-1}$, then on the one hand~$s$ is mapped to~$\fs$ under our homomorphism, and on the other hand the braid relation is equivalent to the relation~$sts = t^{-1} s t^{-1}$ for the generators~$s$ and~$t$ of~$B_3$. The braid relation implies relatively easily that the element $(rt)^3 = (rtr) (trt)$ is central in~$B_3$. This yields, again in view of the second presentation of the modular group, not only that~$(rt)^6$ is contained in the kernel of this homomorphism, but that it even generates the kernel. We therefore have the short exact sequence
\[1 \longrightarrow \langle (rt)^6 \rangle \longrightarrow B_3 \longrightarrow \SL(2,\Z) 
\longrightarrow 1.\]

We want to compare this homomorphism from~$B_3$ to~$\SL(2,\Z)$ with the homomorphism 
$D_1 \colon \Gamma_{1,1} = \Gamma_{1,1}(\rho_1) \to \Gamma_{1}$ that caps off the one boundary component with a full disk. As discussed in Paragraph~\ref{Cap}, $D_1$ is the composition of 
$C_1\colon \Gamma_{1,1} \to \Gamma_{1,0}(y)$, which caps off the boundary component with a punctured disk that contains a marked point~$y$ in its interior, and the forgetful map 
\[F_y\colon \Gamma_{1,0}(y) \to \Gamma_{1,0} = \Gamma_{1}.\]
At this point, there is a small conceptual difficulty with the polygon model. In the polygon model, the capped-off boundary component is parametrized by~$\rho_1$, and the point~$y$ in its interior is different from the base point~$x$ that is the common image of all the vertices of the polygon that are different from the start point and the end point of the edge labeled by~$\rho_1$. To account for this difference, we choose a diffeomorphism~$\phi\colon \Sigma_{1,0} \to \Sigma_{1,0}$ that is isotopic to the identity and satisfies~$\phi(x) = y$. We then have the isomorphism
\[ \Gamma_{1,0}(x) \to \Gamma_{1,0}(y),~[\psi] \mapsto [\phi \circ \psi \circ \phi^{-1}]\]
that satisfies $F_y([\phi \circ \psi \circ \phi^{-1}]) = F_x([\psi])$. Moreover, the map
\[\pi_1(\Sigma_{1,0},x) \to \pi_1(\Sigma_{1,0},y),~[\gamma] \mapsto [\phi \circ \gamma]\]
is an isomorphism of fundamental groups. Because the curves~$\gamma$ and~$\phi \circ \gamma$ are freely homotopic, they map to the same singular homology class under the respective Hurewicz maps. If~$\gamma$ is a smooth simple closed curve, we have 
\[ [\fd_{\phi \circ \gamma}] = [\phi] [\fd_{\gamma}] [\phi^{-1}] =  [\fd_{\gamma}] \]
in $\Gamma_{1}$. These considerations show that the diagram
\begin{center}
\begin{tikzcd}
\Gamma_{1,0}(y) \arrow{r} \arrow[d, twoheadrightarrow, "F_y"'] & \Aut(\pi_1(\Sigma_{1,0},y)) \arrow{r} & \Aut(H_1(\Sigma_{1,0},\Z)) \cong \GL(2,\Z)\arrow[equal]{d} \\
\Gamma_{1,0} \arrow{rr} &  & \Aut(H_1(\Sigma_{1,0},\Z)) 
\cong \GL(2,\Z)
\end{tikzcd}
\end{center}
commutes. Although it does not prove this fact, this diagram suggests that the map
$\Gamma_{1,0}(y) \to \Aut(H_1(\Sigma_{1,0},\Z))$ in the top row might be injective, and this indeed turns out to be the case (cf.~\cite[Par.~2.2.4, p.~54f]{FM}). Therefore the forgetful map~$F_y$ must be bijective in our present case. For the kernel of the homomorphism $D_1 = F_y \circ C_1$, we therefore have $\Ker(D_1) = \Ker(C_1) = \langle \fd_{1} \rangle$ by Proposition~\ref{CapSeqEq}. For the Birman sequence
\begin{equation*}
\pi_1(\Sigma_{1},y) \overset{P_y}{\longrightarrow} \Gamma_{1}(y) \overset{F_y}{\longrightarrow} \Gamma_{1} \longrightarrow 1
\end{equation*}
discussed in Paragraph~\ref{Birm}, this implies that the pushing map~$P_y$ vanishes identically, and is in particular not injective. This does not contradict the claims made there, because the Euler characteristic~$\chi(\Sigma_{1}) = 0$ is not strictly negative.

If we compose the monomorphism $\Gamma_{1,0}(y) \to \Aut(H_1(\Sigma_{1,0},\Z))$ just described with the capping map $C_1\colon \Gamma_{1,1} \to \Gamma_{1,0}(y)$, we obtain a homomorphism from $\Gamma_{1,1}$ to $\Aut(H_1(\Sigma_{1,0},\Z))$. This map can also be constructed in another way. As discussed in Paragraph~\ref{FundGroupGen}, the fundamental group~$\pi_1(\Sigma_{1,1},x)$ of a torus with one boundary component is a free (nonabelian) group whose generators are the relative homotopy classes~$[\alpha_1]$ and~$[\beta_1]$. But by Hurewicz' theorem, we then have that the first homology group~$H_1(\Sigma_{1,1},\Z)$ is again free abelian with the homology classes of~$\alpha_1$ and~$\beta_1$ as generators. This shows that the inclusion map~$\Sigma_{1,1} \to \Sigma_{1}$ induces an isomorphism between the first homology groups, so that also $\Aut(H_1(\Sigma_{1,1},\Z)) \cong \GL(2,\Z)$ with respect to these generators. In view of our considerations above, the Dehn twists~$\ft_1$ and~$\fr_1$ in~$\Gamma_{1,1}$ are then represented by the same matrices as the corresponding Dehn twists in~$\Gamma_{1}$. 

As we said at the beginning of Paragraph~\ref{SpecDehn}, the Dehn twists~$\fr_1$ and~$\ft_1$ satisfy the braid relation, so that we obtain a group homomorphism
\[B_3 \to \Gamma_{1,1}\]
that maps~$r$ to~$\fr_1$ and~$t$ to~$\ft_1$. By the Dehn-Lickorish theorem~\ref{DehnLick}, this homomorphism is surjective. Now the 2-chain relation (cf.~\cite[Par.~4.4.1, p.~107f]{FM}) states that 
$(\fr_1 \ft_1)^6 = \fd_1$ (cf.~\cite[Par.~3.4.1, Fig.~141, p.~124]{S} for an illustration). We therefore have the commutative diagram
\begin{center}
\begin{tikzcd}
1 \arrow[r] & \langle (rt)^6 \rangle \arrow[r] \arrow[d] & B_3 \arrow[r] \arrow[d] & \SL(2,\Z) \arrow[r]  \arrow[d, "\cong"] & 1 \\
1 \arrow[r] & \langle \fd_1 \rangle \arrow[r] & \Gamma_{1,1} \arrow[r, "D_1"] & \Gamma_{1} \arrow[r] & 1
\end{tikzcd}
\end{center}
in which the rightmost vertical map is the isomorphism described above. The five-lemma now implies that \mbox{$\Gamma_{1,1} \cong B_3$} (cf.~\cite[Par.~3.6.4, p.~87f]{FM}).

\subsection{The mapping class group of the sphere} \label{Sphere}
In the preceding paragraph, we have introduced the braid group on three strands. More generally, there is, for $n \ge 2$, a braid group~$B_n$ on $n$~strands, which can be defined as the group generated by elements~$\sigma_1,\ldots,\sigma_{n-1}$ subject to the relations
\[\sigma_i \sigma_{i+1} \sigma_i = \sigma_{i+1} \sigma_i \sigma_{i+1} \]
for $i=1,\ldots,n-2$ and $\sigma_i \sigma_j = \sigma_j \sigma_i$ if~$j>i+1$. 
Geometrically, $\sigma_i$ can be interpreted as the interchange of the $i$th and the $(i+1)$st strand (cf.~\cite[Sec.~9.2, p.~246f]{FM}). In the case $n=3$ considered in the preceding paragraph, we have $r = \sigma_1$ and~$t = \sigma_2$. 

From this definition, we see that there is a group homomorphism from the braid group~$B_n$ to the symmetric group~$S_n$ that maps~$\sigma_i$ to the transposition of~$i$ and~\mbox{$i+1$}. Pulling the natural action of~$S_n$ back along this homomorphism, we get an action of~$B_n$ on the set~$\{1,2,\ldots,n\}$ that we denote by $i \mapsto \sigma.i$. If we view~$\Z^n$ as the set of functions from~$\{1,2,\ldots,n\}$ to~$\Z$, we can form the corresponding wreath product, which we denote by~$\Z^n \rtimes B_n$. If $E_i = (e_i, 1_{B_n})$ denotes the $i$th canonical basis vector, considered as an element of the wreath product, and if we identify $\sigma \in B_n$ with 
$(0, \sigma) \in \Z^n \rtimes B_n$, we have the commutation relation $\sigma E_i = E_{\sigma.i} \sigma$. The wreath product is therefore generated by the elements~$\sigma_1,\ldots,\sigma_{n-1}$ together with~$E_1,\ldots,E_n$. Besides the defining relations of the braid group stated above, these generators satisfy the relations $E_i E_j = E_j E_i$ as well as 
\begin{center}
$\sigma_i E_j = 
\begin{cases}
E_{i+1} \sigma_i &: j=i \\
E_{i} \sigma_i &: j=i+1 \\
E_{j} \sigma_i &: j \neq i \; \text{and} \;j \neq i+1\\
\end{cases}$
\end{center}
It is not too complicated to show that these relations are defining.

It turns out that the mapping class group~$\Gamma_{0,n}$ is a quotient group of this wreath product. If we map~$\sigma_i$ to~$\fb_{i,i+1}$ and~$E_{i}$ to~$\fd_i$, we obtain a surjective group homomorphism from 
$\Z^n \rtimes B_n$ to~$\Gamma_{0,n}$. However, the map is not bijective, because the generators~$\fb_{i,i+1}$ and~$\fd_i$ satisfy additional relations: Besides the relations 
\[\fb_{i,i+1} \fb_{i+1,i+2} \fb_{i,i+1} = \fb_{i+1,i+2} \fb_{i,i+1} \fb_{i+1,i+2} \]
for $i=1,\ldots,n-2$ and $\fb_{i,i+1} \fb_{j,j+1} = \fb_{j,j+1} \fb_{i,i+1}$ if~$j>i+1$ for the braidings as well as $\fd_i \fd_j = \fd_j \fd_i$ and
\begin{center}
$\fb_{i,i+1} \fd_j = 
\begin{cases}
\fd_{i+1} \fb_{i,i+1} &: j=i \\
\fd_i \fb_{i,i+1} &: j=i+1 \\
\fd_j \fb_{i,i+1} &: j \neq i \; \text{and} \;j \neq i+1\\
\end{cases}$
\end{center}
for the Dehn twists and their interaction with the braidings, we have the additional relations
\[\fb_{1,2} \fb_{2,3} \cdots \fb_{n-1,n}^2 \cdots \fb_{2,3} \fb_{1,2} = \fd_1^{-2}\]
and $(\fb_{1,2} \fb_{2,3} \cdots \fb_{n-1,n})^n = \fd_1^{-1} \fd_2^{-1} \cdots \fd_n^{-1}$. These relations all together are again defining (cf.~\cite[Par.~4.2, p.~487]{L1}, see also \cite[Par.~3.1, p.~322f]{L2}).

Clearly, we can embed the wreath product~$\Z^{n-1} \rtimes B_{n-1}$ into the wreath 
product $\Z^n \rtimes B_n$ by sending~$\sigma_i$ to~$\sigma_i$ and~$E_i$ to~$E_i$. By composing this injection with the surjection just described, we get a group homomorphism from~$\Z^{n-1} \rtimes B_{n-1}$ to~$\Gamma_{0,n}$. It clearly takes values in the subgroup~$\Gamma_{0,n}(\rho_n)$ that fixes the last boundary component pointwise. The fundamental fact about this map is the following:
\begin{Proposition} \label{PropSphere}
The group homomorphism
\[\Z^{n-1} \rtimes B_{n-1} \to \Gamma_{0,n}(\rho_n)\]
that maps~$\sigma_i$ to~$\fb_{i,i+1}$ and~$E_i$ to~$\fd_i$ is an isomorphism.
\end{Proposition}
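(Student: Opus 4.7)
The plan is to recognize $\Gamma_{0,n}(\rho_n)$ as the mapping class group of a disk with $n-1$ interior holes whose outer boundary $\rho_n$ is fixed pointwise, and to exhibit it as an extension of the Artin braid group $B_{n-1}$ by the free abelian group generated by the Dehn twists around the inner boundaries. This extension will be split by the braidings $\fb_{i,i+1}$, and the resulting semidirect product reproduces the wreath product structure via the commutation relations already recorded in the presentation of $\Gamma_{0,n}$.

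First I would construct a homomorphism $\Psi\colon \Gamma_{0,n}(\rho_n) \to B_{n-1}$ by iterating the capping homomorphism of Proposition~\ref{CapSeq}, replacing each boundary component $\rho_1, \ldots, \rho_{n-1}$ by a punctured disk with an interior marked point $y_i$. The resulting surface is a closed disk with $n-1$ permutable marked interior points and outer boundary $\rho_n$ fixed pointwise; by the classical Artin theorem, its mapping class group is canonically $B_{n-1}$, with $\sigma_i$ corresponding to the half-twist exchanging $y_i$ and $y_{i+1}$. Under this identification, $\Psi(\fb_{i,i+1}) = \sigma_i$ and $\Psi(\fd_i) = 1$ for $i < n$. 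Iterated use of Proposition~\ref{CapSeq}, together with the fact that the curves $\partial_1, \ldots, \partial_{n-1}$ are pairwise disjoint (so the $\fd_i$ commute) and each generates an infinite cyclic subgroup, identifies the kernel of $\Psi$ with the free abelian group generated by $\fd_1, \ldots, \fd_{n-1}$, yielding the short exact sequence
\[
1 \longrightarrow \Z^{n-1} \longrightarrow \Gamma_{0,n}(\rho_n) \xrightarrow{\;\Psi\;} B_{n-1} \longrightarrow 1.
\]

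Next I would split this sequence via $\sigma_i \mapsto \fb_{i,i+1}$; this is a genuine group-theoretic section because the braidings already satisfy the defining braid relations $\fb_{i,i+1} \fb_{i+1,i+2} \fb_{i,i+1} = \fb_{i+1,i+2} \fb_{i,i+1} \fb_{i+1,i+2}$ and the distant commutations inside $\Gamma_{0,n}$, as recorded in the presentation above. The induced action of $B_{n-1}$ on $\Z^{n-1}$ is by conjugation, and the relations $\fb_{i,i+1} \fd_j \fb_{i,i+1}^{-1} = \fd_{\sigma_i.j}$ (also from that presentation) show this action factors through the quotient $B_{n-1} \to S_{n-1}$ and matches the wreath product action exactly. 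Combining the splitting with the kernel gives an isomorphism $\Z^{n-1} \rtimes B_{n-1} \xrightarrow{\sim} \Gamma_{0,n}(\rho_n)$ that coincides with $\phi$ on the generators $\sigma_i$ and $E_i$, and hence equals $\phi$.

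The hard part will be the Artin identification of the mapping class group of the $(n-1)$-punctured disk with $B_{n-1}$, a classical result which enters as external input. A secondary technicality is the justification of the iterated capping: since $\Gamma_{0,n}(\rho_n)$ does not fix the inner boundaries pointwise (their marked points may be permuted), Proposition~\ref{CapSeq} must be applied first on the pure subgroup fixing all boundaries pointwise, and the full $\Gamma_{0,n}(\rho_n)$ must then be recovered by glueing in the braiding symmetry; this is routine but must be done carefully. Granted these inputs, everything else reduces to bookkeeping with the relations of $\Gamma_{0,n}$ and the explicit definition of the wreath product $\Z^{n-1} \rtimes B_{n-1}$.
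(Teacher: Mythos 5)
Your proof is correct, and it fills in an argument that the paper disposes of by citation alone: the paper simply identifies $\Gamma_{0,n}(\rho_n)$ with the mapping class group of the disk with $n-1$ holes (outer boundary fixed pointwise) and then cites the classical fact that this group is the ``framed braid group'' $\Z^{n-1}\rtimes B_{n-1}$, with references to \cite{PS} and \cite{FM}. You instead reconstruct that classical fact in the language already set up in the paper, by producing the split short exact sequence $1 \to \Z^{n-1} \to \Gamma_{0,n}(\rho_n) \to B_{n-1} \to 1$ via iterated capping (Proposition~\ref{CapSeq}) and Artin's identification of the braid group with the mapping class group of the punctured disk, and then exhibiting the semidirect product action from the recorded relations between $\fb_{i,i+1}$ and $\fd_j$. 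The two routes rest on the same external topological input (Artin's theorem), so this is best described as a more explicit and self-contained version of the paper's argument rather than a genuinely independent one; what it buys is a visible derivation of the wreath-product structure from the capping and Birman machinery already in place, while what it costs is the extra bookkeeping you yourself flag. You are right that the bookkeeping needs care: Proposition~\ref{CapSeq} as stated caps one boundary at a time and requires the diffeomorphisms to fix that boundary pointwise, so a literal iteration works only on the pure subgroup; the cleaner variant is to cap all of $\rho_1,\ldots,\rho_{n-1}$ simultaneously by punctured disks, landing directly in the mapping class group of the $n{-}1$-punctured disk with permutable punctures, whose kernel is the free abelian group on $\fd_1,\ldots,\fd_{n-1}$. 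With that adjustment the argument is complete and matches the generators as claimed.
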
 

This proposition follows from the fact that we can view a sphere with one boundary component as a closed disk, say of radius~$1$. The mapping class group~$\Gamma_{0,n}(\rho_n)$ is therefore isomorphic to the mapping class group of a disk with $n-1$~holes in which the diffeomorphisms are required to restrict to the identity on the boundary unit circle. One of the standard topological descriptions of the braid group yields that this mapping class group is isomorphic to $\Z^{n-1} \rtimes B_{n-1}$ in the indicated way (cf.~\cite[\S~7, p.~64]{PS}, see also~\cite[Par.~9.1.3, p.~243f]{FM}).

\section{Tensor categories} \label{Sec:TensCat}
\subsection{Finiteness} \label{Fin}
Let~$\K$ be an algebraically closed field of arbitrary characteristic, and let~$\CC$ be an essentially small abelian $\K$-linear category. Here, $\CC$ is called essentially small if there is a set, not a class, of objects with the property that every object in~$\CC$ is isomorphic to an object in this set. We assume that~$\CC$ is finite in the sense of~\cite[Def.~1.8.6, p.~9]{EGNO}, i.e., that it has finite-dimensional spaces of morphisms, that every object has finite length, that it has enough projectives, and that there are only finitely many isomorphism classes of simple objects. That it has enough projectives means that for every object~$X$, there is an epimorphism \mbox{$f\colon P \to X$} from a projective object~$P$ (cf.~\cite[Sec.~II.14, p.~70]{Mi}). A standard, but not completely trivial argument shows that, under the assumption on finite length, the epimorphism~$f$ can be chosen to be essential, where an epimorphism~\mbox{$f\colon P \to X$} is called essential if, given a morphism $g\colon Y \to P$, we can conclude that~$g$ is an epimorphism if~$f \circ g$ is an epimorphism. An essential epimorphism 
\mbox{$f\colon P \to X$} from a projective object~$P$ is usually called a projective cover of~$X$; it follows from the assumption on finite length that this definition is equivalent to the one given in~\cite[Def.~1.6.6, p.~6]{EGNO}. 

As explained in~\cite[Prop.~1.4, p.~3]{DSS}, an essentially small abelian $\K$-linear category is finite if and only if it is equivalent to the category of finite-dimensional modules over a finite-dimensional $\K$-algebra (cf.~also~\cite[p.~9f]{EGNO}). We note that finite categories are called bounded categories in~\cite{KL}.

We will assume that~$\CC$ is a strict tensor category in the sense of~\cite[Def.~XI.2.1, p.~282]{Ka}, so that we have in particular a tensor product functor~$\ot$ from~$\CC \times \CC$ to~$\CC$ and a unit object~$\1$. We note that tensor categories are called monoidal categories in~\cite[Def.~2.2.8, p.~25]{EGNO}; the term `tensor category' is used there for a category that satisfies additional restrictions (cf.~\cite[Def.~4.1.1, p.~65]{EGNO}), all of which will be satisfied in our situation, as we start to discuss now: First, we require the tensor product to be $\K$-bilinear. Second, we require that the unit object~$\1$ is simple and that $\End(\1)$ is one-dimensional over~$\K$.

In addition, we require that~$\CC$ is left rigid; i.e., that every object~$X$ has a left dual~$X^*$ (cf.~\cite[Def.~2.10.1, p.~40]{EGNO}; \cite[Def.~XIV.2.1, p.~342]{Ka}). The corresponding evaluation and coevaluation morphisms will be denoted by
\[\ev_X\colon X^* \ot X \to \1 \qquad \text{and} \qquad \coev_X\colon \1 \to X \ot X^*.\]
We also assume that~$\CC$ is braided with braiding
\[c_{X,Y}\colon X \ot Y \to Y \ot X\]
(cf.~\cite[Def.~8.1.1, p.~195]{EGNO}; \cite[Def.~XIII.1.1, p.~315]{Ka}) and that it carries a ribbon structure
\[\theta_X \colon X \to X\]
(cf.~\cite[Def.~8.10.1, p.~216]{EGNO}; \cite[Def.~XIV.3.2, p.~349]{Ka}), which by definition satisfies the equations
\[\theta_{X\ot  Y}=(\theta_X\ot \theta_Y)\circ c_{Y,X}\circ c_{X,Y}\] 
and $\theta_{X^*} = \theta_X^*$. Under these assumptions, every object~$X$ does not only have a left dual~$X^*$, but also a right dual~$\prescript{*}{} X$, which is defined by using the same object~\mbox{$\prescript{*}{} X \deq X^*$}, but introducing the evaluation morphism
\[\ev'_X \deq \ev_X \circ c_{X,X^*} \circ (\theta_X \ot \id_{X^*})\colon X \ot \prescript{*}{} X \to \1 \]
and the coevaluation morphism
\[\coev'_X \deq (\id_{X^*} \ot \theta_X) \circ c_{X,X^*} \circ \coev_X\colon \1 \to \prescript{*}{} X \ot X\]
(cf.~\cite[Prop.~XIV.3.5, p.~352]{Ka}). The axioms of left and right duality imply that for every object~$X$, the functor~$X \ot \textbf{--}$ has the left adjoint~$X^* \ot \textbf{--}$ and the right adjoint~$\prescript{*}{} X \ot \textbf{--}$, so that 
\[\Hom(X^* \ot Y, Z) \cong \Hom(Y, X \ot Z) \quad \text{and} \quad
\Hom(X \ot Y, Z) \cong \Hom(Y, \prescript{*}{}{X} \ot Z). \]
Similarly, the functor~$\textbf{--} \ot X$ has the right adjoint~$\textbf{--} \ot X^*$ and the left adjoint~$\textbf{--} \ot \prescript{*}{} X$, so that 
\[\Hom(Y \ot X, Z)\cong \Hom(Y, Z \ot X^*) \quad \text{and} \quad
\Hom(Y \ot \prescript{*}{}{X}, Z) \cong \Hom(Y, Z \ot X)\]
(cf.~\cite[Prop.~2.10.8, p.~42]{EGNO}; \cite[Prop.~XIV.2.2, p.~343f and p.~346f]{Ka}). 
This implies that the functors~$X \ot \textbf{--}$ and~$\textbf{--} \ot X$ are exact (cf.~\cite[Chap.~II, Thm.~7.7, p.~68]{HS}; \cite[Par.~2.7, Satz~3, p.~72]{P}). This in turn implies that $X \ot P$ and $P \ot X$ are projective if~$P$ is projective (cf.~\cite[Chap.~II, Prop.~10.2, p.~82]{HS}). We will also need the following related result on adjunctions:

\begin{Proposition} \label{RightAdj}
Suppose that $F\colon \CC \to \DD$ is a $\K$-linear functor between finite $\K$-linear categories. Then~$F$ has a right adjoint if and only if it is right exact.
\end{Proposition}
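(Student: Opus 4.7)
The ``only if'' direction is immediate: any functor with a right adjoint preserves all colimits that exist, in particular finite colimits, and so is right exact (additivity is automatic for a $\K$-linear functor).

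For the ``if'' direction, I would invoke the equivalence with module categories recalled just before the proposition to choose finite-dimensional $\K$-algebras $A$ and~$B$ together with $\K$-linear equivalences of abelian categories $\CC \simeq A\m$ and $\DD \simeq B\m$. Since adjunctions transport along equivalences, it suffices to construct a right adjoint for the induced right exact $\K$-linear functor $\tilde F\colon A\m \to B\m$.

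The heart of the argument is an Eilenberg--Watts style identification. Set $M \deq \tilde F(A)$; it carries a left $B$-action from being an object of $B\m$ and a commuting right $A$-action coming from applying~$\tilde F$ to the canonical algebra isomorphism $A \cong \End_A(A)^{\op}$, and is hence a $(B,A)$-bimodule. The natural transformation $\eta_X\colon M \otimes_A X \to \tilde F(X)$ that at $X = A$ is the identity on~$M$ is then an isomorphism for every finite-dimensional $A$-module~$X$: choosing a finite presentation $A^m \to A^n \to X \to 0$, which exists because $A$ is finite-dimensional and so every object of $A\m$ is finitely generated with finitely generated syzygies, and applying the two right exact additive functors $M \otimes_A \textbf{--}$ and $\tilde F$, one concludes by the five lemma from the already established case $X = A$. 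Thus $\tilde F \cong M \otimes_A \textbf{--}$ as $\K$-linear functors.

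Finally, the functor $M \otimes_A \textbf{--}$ admits the right adjoint $N \mapsto \Hom_B(M, N)$, where the left $A$-action on $\Hom_B(M, N)$ is given by $(a \cdot f)(m) \deq f(ma)$; this is the usual tensor-hom adjunction for bimodules. Since $M$ is finite-dimensional over~$\K$, so is $\Hom_B(M, N) \subseteq \Hom_\K(M, N)$ for every finite-dimensional~$N$, so this right adjoint actually takes values in~$A\m$; transporting back along the original equivalences yields a right adjoint for~$F$. The main obstacle is the Eilenberg--Watts identification in the third paragraph, where right exactness enters essentially; once it is established, the construction of the adjoint and the finite-dimensionality check are routine.
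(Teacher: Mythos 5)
Your argument is correct and follows the same route as the paper: reduce to module categories over finite-dimensional algebras, identify the functor with $M \otimes_A \textbf{--}$ via Eilenberg--Watts, and take the tensor--hom adjoint $\Hom_B(M,\textbf{--})$. The only difference is one of detail: you re-prove the Eilenberg--Watts identification from scratch (using the finite presentation $A^m \to A^n \to X \to 0$ and the five lemma), whereas the paper cites it, and you explicitly check that $\Hom_B(M,N)$ is finite-dimensional so that the adjoint lands back in $\DD$, a point the paper leaves tacit.
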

\begin{proof}
If~$F$ has a right adjoint, it is right exact (cf.~\cite[Chap.~II, Thm.~7.7, p.~68]{HS}). As pointed out above, we can assume for the converse that~$\CC$ and~$\DD$ are the categories of finite-dimensional left modules over finite-dimensional algebras~$A$ and~$B$, respectively. Then the Eilenberg-Watts theorem (cf.~\cite[Thm.~5.45, p.~261]{Ro2}; \cite[Thm.~2.4, p.~677]{I}) states that~$F$ is naturally equivalent to the functor~$M \ot_A -$ for a $B$-$A$-bimodule~$M$. But by the standard adjunction (cf.~\cite[Thm.~2.76, p.~93]{Ro2}), this functor has the right adjoint $\Hom_B(M,-)$.
\end{proof}

A different proof of this proposition that connects it to the adjoint functor theorem is given in~\cite[Cor.~1.9, p.~6]{DSS}.

An object~$X$ is called transparent if $c_{Y,X} \circ c_{X,Y} = \id_{X \ot Y}$ for all objects~$Y \in \CC$ (cf.~\cite[p.~224]{B}). Transparent objects are called central in \cite[Rem.~2.10, p.~296]{Mue}. The full subcategory consisting of all transparent objects is called the M\"uger center of~$\CC$.

The unit object is transparent (cf.~\cite[Exerc.~8.1.6, p.~196]{EGNO}; \cite[Prop.~XIII.1.2, p.~316]{Ka}), and the direct sum of two transparent objects is transparent. Therefore, an object that is isomorphic to a finite direct sum of copies of the unit object is contained in the M\"uger center. We say that~$\CC$ is modular if the M\"uger center does not contain any other objects. In this definition, we do not require that~$\CC$ be semisimple. In the semisimple case, this condition is equivalent to the invertibility of the modular 
\mbox{S-matrix} (cf.~\cite[Prop.~8.20.12, p.~243]{EGNO}; \cite[Cor.~2.16, p.~297]{Mue}; \cite[Sec.~5.1, p.~24]{Sh1}), which is in this case usually taken as the definition of modularity (cf.~\cite[Def.~8.13.4, p.~224]{EGNO}).

\subsection{Factorizable Hopf algebras} \label{Fact}
A fundamental example for categories with the properties that we have just listed are the representation categories of factorizable ribbon Hopf algebras. Suppose that~$A$ is a Hopf algebra with coproduct~$\Delta$, counit~$\varepsilon$, and antipode~$S$. For the coproduct, we will use the sigma notation of R.~Heyneman and M.~Sweedler in the modified form 
$\Delta(a) = a_{(1)} \ot a_{(2)}$. If we take for~$\CC$ the category of finite-dimensional left $A$-modules, it is not only a $\K$-linear abelian category, but also a tensor category, where the tensor product of two $A$-modules~$X$ and~$Y$ becomes an $A$-module via
\[a.(x \ot y) \deq a_{(1)}.x \ot a_{(2)}.y\]
for~$x \in X$, $y \in Y$, and~$a \in A$, where we have used a dot to indicate the module action. The unit object~$\1$ is the base field~$\K$, endowed with the trivial $A$-module structure coming from the counit. The left dual~$X^*$ of~$X$ is the dual vector space $X^* \deq \Hom_\K(X,\K)$, endowed with structures described in~\cite[Examp.~XIV.2.1, p.~347]{Ka}. Although this category is not strict, our considerations below apply to this category, because we have the notion of the tensor product 
$X_1 \ot X_2 \ot \dots \ot X_n$ of~$n$ vector spaces or $A$-modules, which is not built as an iteration of the tensor product of two vector spaces and therefore does not depend on the insertion of parentheses. The tensor products that appear below have to be understood in this way.

We assume that $A$ is quasitriangular with R-matrix~$R = R^1 \ot R^2 \in A \ot A$, where we follow the spirit of the notation for the coproduct above and write this tensor, which is in general not decomposable, as if it were decomposable (cf.~\cite[Def.~VIII.2.1, p.~173]{Ka}). From the R-matrix, we obtain a braiding on~$\CC$ via the formula
\[c_{X,Y}(x \ot y) \deq R^2.y \ot R^1.x\]
for~$x \in X$ and $y \in Y$ (cf.~\cite[Prop.~XIII.1.2, p.~318]{Ka}). From the R-matrix, we also obtain the monodromy matrix~$Q \deq R'R$, where $R'\deq R^2 \ot R^1$. As in the case of the R-matrix, we write $Q = Q^1 \ot Q^2$. $A$~is called factorizable if  the map
\[\Phi\colon A^* \to A,~\varphi \mapsto (\id_A \ot \varphi)(Q)\]
is bijective (cf.~\cite[Par.~3.2, p.~26]{SZ} and the references given there). Note that this condition implies that~$A$ is finite-dimensional, so that~$\CC$ is finite. We will require that~$A$ is factorizable, which is equivalent to the requirement that the category~$\CC$ is modular, as we will explain in Paragraph~\ref{CoendHopf}.

Furthermore, we assume that~$A$ contains a ribbon element, i.e., a nonzero central element~$v$ that satisfies 
\[\Delta(v) = Q (v \ot v) \qquad \text{and} \qquad S(v) = v\]
(cf.~\cite[Par.~4.3, p.~37]{SZ}; note the difference to~\cite[Def.~XIV.6.1, p.~361]{Ka}). Our ribbon element gives rise to a ribbon structure of~$\CC$ by defining
\[\theta_X\colon X \to X,~x \mapsto v.x\]
(cf.~\cite[Prop.~XIV.6.2, p.~361]{Ka}).

\subsection{Coends} \label{Coend}
\enlargethispage{11mm}
If~$\CC$ and~$\DD$ are categories and $F\colon \CC^{\op} \times \CC \to \DD$ is a bifunctor, a coend of~$F$ is an object~$L$ of~$\DD$ together with a morphism $\iota_X\colon F(X,X) \to L$ for every object~$X \in \CC$ which is dinatural in the sense that, for every morphism $f\colon X \to Y$, the diagram 
\begin{center}
\begin{tikzcd}[column sep=large]
F(Y,X) \arrow[r, "{F(\id_{Y}, f)}"] \arrow[d, "{F(f, \id_X)}"'] & F(Y,Y) \arrow[d, "\iota_Y"] \\
F(X,X) \arrow[r, "\iota_X"'] & L
\end{tikzcd}
\end{center}
commutes (cf.~\cite[Chap.~IX, Sec.~6, p.~226f]{ML2}). Moreover, this dinatural transformation is required to be universal, which means that for another dinatural transformation $\kappa_X\colon F(X,X) \to Z$, there is a unique morphism $g\colon L \to Z$ that makes for all $X\in \CC$ the diagram  
\begin{center}
\begin{tikzcd}
F(X,X) \arrow[r, "\iota_X"]  \arrow["\kappa_X"']{dr} & L \arrow[d, "g"] \\ & Z
\end{tikzcd}
\end{center}
commutative. 
\vspace{6mm}
\pagebreak

These conditions determine a coend up to isomorphism. If a coend exists, we will always assume that a particular coend has been chosen, for which we will use the notation
\[L = \int^{X} F(X, X)\]
as in~\cite[loc.~cit.]{ML2}.

From its universal property, we see that coends behave well with respect to natural transformations:
\begin{Lemma} \label{LemCoendNat}
Suppose that $F'\colon \CC^{\op} \times \CC \to \DD$ is a second bifunctor with coend
\mbox{$\iota'_X\colon F'(X,X) \to L'$}, and that $\eta_{X,Y} \colon F(X,Y) \to F'(X,Y)$ is a natural transformation. Then there is a unique morphism $g\colon L \to L'$ such that the diagram
\begin{center}
\begin{tikzcd}
F(X,X) \arrow[r, "\iota_X"] \arrow[d, "\eta_{X,X}"'] & L \arrow[d, "g"] \\
F'(X,X) \arrow[r, "\iota'_X"'] & L'
\end{tikzcd}
\end{center}
commutes for all objects~$X \in \CC$.
\end{Lemma}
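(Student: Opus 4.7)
The plan is a direct application of the universal property of the coend $L$. Set $\kappa_X \deq \iota'_X \circ \eta_{X,X} \colon F(X,X) \to L'$ for every object $X \in \CC$. If I can show that the family $(\kappa_X)$ is dinatural in $X$ when viewed as a family of morphisms $F(X,X) \to L'$, then the universal property of $(L, \iota)$ provides a unique morphism $g \colon L \to L'$ satisfying $g \circ \iota_X = \kappa_X$ for all $X$, which is precisely the commutativity of the square in the statement. Uniqueness of $g$ is automatic from the universal property, so everything reduces to verifying the dinaturality of $(\kappa_X)$.

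To check dinaturality, I take a morphism $f \colon X \to Y$ and must show that
\[\kappa_X \circ F(f, \id_X) = \kappa_Y \circ F(\id_Y, f)\]
as morphisms $F(Y,X) \to L'$. The key tools are two compatibility properties already in hand. First, the naturality of $\eta$ applied to the morphisms $(f, \id_X)$ and $(\id_Y, f)$ of $\CC^{\op} \times \CC$ gives the two identities $\eta_{X,X} \circ F(f, \id_X) = F'(f, \id_X) \circ \eta_{Y,X}$ and $\eta_{Y,Y} \circ F(\id_Y, f) = F'(\id_Y, f) \circ \eta_{Y,X}$. Substituting these into the definitions of $\kappa_X$ and $\kappa_Y$ reduces the claim to the identity
\[\iota'_X \circ F'(f, \id_X) \circ \eta_{Y,X} = \iota'_Y \circ F'(\id_Y, f) \circ \eta_{Y,X}.\]
This now follows by precomposing with $\eta_{Y,X}$ the dinaturality square of $(L', \iota')$ for the morphism $f$.

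Having established dinaturality, I invoke the universal property of the coend $(L, \iota)$ in the role of the target $Z$ being $L'$ with the dinatural transformation $\kappa$, producing the unique $g \colon L \to L'$ with $g \circ \iota_X = \iota'_X \circ \eta_{X,X}$ for all $X$. There is no genuine obstacle here: the statement is essentially a functoriality lemma for the coend construction, and the only computational step worth writing out carefully is the dinaturality diagram, which is a routine paste of the naturality square of $\eta$ with the dinaturality square of $\iota'$.
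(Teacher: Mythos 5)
Your proof is correct and follows the standard route: define $\kappa_X \deq \iota'_X \circ \eta_{X,X}$, verify its dinaturality by pasting the naturality square of $\eta$ with the dinaturality square of $\iota'$, and invoke the universal property. The paper does not write out this argument but simply cites it as the coend version of Mac Lane's proposition on ends (Chap.~IX, Sec.~7, Prop.~1), whose proof is exactly the dualization of what you wrote, so the two approaches coincide.
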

\begin{proof}
This lemma is the version of~\cite[Chap.~IX, Sec.~7, Prop.~1, p.~228]{ML2} for coends instead of ends.
\end{proof}

When we speak of a coend without further specifications, we think of the case where~$\CC=\DD$ is our base category satisfying the requirements stated in Paragraph~\ref{Fin}, and the bifunctor~$F$ is given by $F(X,Y) = X^* \ot Y$. It is shown in \cite[Par.~5.1.3, p.~266ff]{KL} that, under our assumptions on~$\CC$, a coend for this bifunctor~$F$ exists. 

It is not too difficult to see that a functor that possesses a right adjoint preserves coends. This together with the Fubini theorem for coends (cf.~\cite[Chap.~IX, Sec.~8, p.~230f]{ML2}) implies that the dinatural transformation
\[\iota_X \ot \iota_Y\colon X^* \ot X \ot Y^* \ot Y \to L \ot L \]
is a coend. Comparing this transformation with the dinatural transformation
\[(\ev_X \ot \ev_Y) \circ (\id_{X^*} \ot (c_{Y^*, X} \circ c_{X, Y^*}) \ot \id_Y)\colon 
X^* \ot X \ot Y^* \ot Y \to \1, \]
the universal property yields a morphism $\omega_L \colon L \ot L \to \1$ that, via the adjunctions stated in Paragraph~\ref{Fin}, determines homomorphisms $\omega'_L\colon L \to L^*$ and 
$\omega''_L\colon L \to \prescript{*}{}{L}$, which are duals of each other. It is shown in~\cite[Thm.~1.1, p.~3]{Sh1} that the modularity of~$\CC$ is equivalent to the property that~$\omega'_L$, or alternatively~$\omega''_L$, is an isomorphism. This property is used in~\cite[Def.~5.2.7, p.~276]{KL} as the definition of modularity. 

The same method can be used to introduce other morphisms that will be needed in the sequel. The easiest of these morphisms arises from the dinatural transformation
$\iota_X \circ(\id_{X^*} \ot \, \theta_{X}) \colon X^* \ot X \to L$. Applying the universal property of the coend yields a morphism $\fT \colon L \to L$ that makes all diagrams of the form
\begin{center}
\begin{tikzcd}
X^* \ot X \arrow[r, "\iota_X"]  \arrow["\iota_X \circ(\id_{X^*} \ot \, \theta_{X})"']{dr} & L \arrow[d, "\fT"] \\ & L
\end{tikzcd}
\end{center}
commutative. Similarly, the universal property of~$L \ot L$ explained above can be used to obtain a morphism $\fN'\colon L \ot L \to L \ot L$ that makes all diagrams of the form
\begin{center}
\begin{tikzcd}
X^* \ot X \ot Y^* \ot Y \arrow[r, "\iota_X \ot \iota_Y"]  \arrow["(\iota_X \ot \iota_Y) \circ (\id_{X^*} \ot (c_{Y^*,X} \circ c_{X,Y^*}) \ot \id_Y)"']{dr} & L \ot L \arrow[d, "\fN'"] \\ & L \ot L
\end{tikzcd}
\end{center}
commutative. Both of these morphisms are used when defining $\fN \deq \fN' \circ (\fT \ot \fT)$. An in a sense hybrid form of the dinatural transformations used in the definition of~$\omega_L$ and of~$\fN'$ is used in the definition of $\fS'$ from $L \ot L$ to $L$: It arises by applying the universal property of~$L \ot L$ to a dinatural transformation in such a way that the diagram
\begin{center}
\begin{tikzcd}
X^* \ot X \ot Y^* \ot Y \arrow[r, "\iota_X \ot \iota_Y"]  
\arrow["(\ev_X \ot \iota_Y) \circ (\id_{X^*} \ot (c_{Y^*,X} \circ c_{X,Y^*}) \ot \id_Y)"']{dr} & L \ot L \arrow[d, "\fS'"] \\ &  L
\end{tikzcd}
\end{center}
becomes commutative.

Given an object~$Z \in \CC$, we define another morphism that is similar to $\fN'$: 
Again, because a functor that possesses a right adjoint preserves coends, the dinatural transformation $\id_Z \ot \iota_X: Z \ot X^* \ot X \to Z \ot L$ is a coend. Its universal property yields a morphism $\fN^l_{Z,L}\colon Z \ot L \to Z \ot L$ that makes all diagrams of the form
\vspace{-2mm}
\begin{center}
\begin{tikzcd}[column sep=large]
Z \ot X^* \ot X  \arrow[r, "\id_Z \ot \iota_X"]  
\arrow["(\id_{Z}\ot \iota_X) \circ ((c_{X^*,Z}\circ c_{Z,X^*})\ot \id_{X})"']{dr} & Z \ot L \arrow[d, "\fN^l_{Z,L}"] \\ &  Z \ot L
\end{tikzcd}
\end{center}
commutative. 

The coend~$L$ is a Hopf algebra inside the category~$\CC$. We will not need the arising product, coproduct, unit, counit, and antipode, which are constructed in a similar way as the morphisms~$\omega_L$, $\fT$, $\fN'$, and $\fS'$ by using the universal property of a coend and are described in~\cite[Par.~1.6, p.~478f]{V} (cf.~also \cite[Par.~5.2.2, p.~271ff]{KL}; note that the conventions are slightly different there). We will, however, need that, as a consequence of the Hopf algebra structure, there are two-sided integrals $\Lambda_L\colon \1 \to L$ and $\lambda_L\colon L \to \1$. Here, the assumption on modularity is used for two points, namely on the one hand for the fact that the unit object~$\1$ can indeed be used as the domain of~$\Lambda_L$ and the codomain of~$\lambda_L$, and on the other hand for the fact that~$\lambda_L$ is two-sided (cf.~\cite[Sec.~5.2, p.~270ff]{KL}). The integral $\Lambda_L$ is used to define 
$\fS \in \End(L)$ as the composition
\[L\xlongrightarrow{\cong} L\ot \1 \xlongrightarrow{\id_L\ot \Lambda_L} L\ot L\xlongrightarrow{\fS'} L\]
(cf.~\cite[Par.~1.3, p.~473]{L1}; the setting there is slightly more general as the modularity hypothesis is weakened).

\subsection{Coends from Hopf algebras} \label{CoendHopf}
In the case where~$\CC$ is the category of finite-dimensional left modules over a factorizable ribbon Hopf algebra~$A$, the coend can be described explicitly: The dual vector space $L \deq A^*$, viewed as a left $A$-module via the left coadjoint action 
\[(a.\varphi)(a') \deq \varphi(S(a_{(1)}) a' a_{(2)})\]
for $a, a' \in A$ and $\varphi \in A^*$, becomes a coend when endowed with the dinatural transformation
\[\iota_X(\xi \ot x)(a) = \xi(a.x) \]
for a left $A$-module~$X$ and elements $\xi \in X^*$, $x \in X$, and~$a \in A$ (cf.~\cite[Lem.~4.3, p.~498]{V}, see also~\cite[Thm.~7.4.13, p.~331]{KL}). It is not difficult to find the explicit form of the morphisms introduced in Paragraph~\ref{Coend} in this model of the coend:

\begin{Proposition} \label{CoendHopfProp}
Suppose that $\varphi, \psi \in A^*$ and that $a,a' \in A$. Then we have \vspace{-10pt}
\begin{enumerate}
\item
$\fT(\varphi)(a)=\varphi(av)$

\item
$\fN'(\varphi \ot \psi)(a \ot a') = \varphi(a Q^1) \psi(S(Q^2) a')$

\item
$\fN(\varphi \ot \psi)(a \ot a') = \varphi(a v_{(1)}) \psi(S(v_{(2)})a')$

\item
$\fS'(\varphi \ot \psi)(a) = \varphi(Q^1) \psi(S(Q^2)a)$

\item 
$\omega_L(\varphi \ot \psi) = \varphi(Q^1) \psi(S(Q^2))$
\end{enumerate}
\end{Proposition}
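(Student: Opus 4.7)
All four formulas can be verified in a uniform way: each of the morphisms $\fT$, $\fN'$, $\fS'$, and $\omega_L$ is characterized by a commutative diagram arising from the universal property of a coend, so it suffices to check that the proposed formulas satisfy those diagrams and then invoke uniqueness. The plan is therefore, for each formula, to (a) verify that the right-hand side defines an $A$-linear morphism with the correct source and target (so that it is a morphism in $\CC$), and then (b) compose with $\iota_X$ (respectively $\iota_X \ot \iota_Y$) applied to an elementary tensor and check the diagram.

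First I would record two ingredients that drive the computation. The ribbon element~$v$ gives the twist $\theta_X(x) = v.x$; the centrality of~$v$ ensures that the map $\varphi \mapsto (a \mapsto \varphi(av))$ is $A$-linear for the coadjoint action on~$A^*$, so that formula~(1) does define a morphism $L \to L$. For the double braiding in the category of left $A$-modules, a direct computation using $c_{X,Y}(x \ot y) = R^2.y \ot R^1.x$ gives
\[(c_{Y,X} \circ c_{X,Y})(x \ot y) = Q^1.x \ot Q^2.y,\]
and the analogous formula with $Y$ replaced by $Y^*$ reads $(c_{Y^*,X} \circ c_{X,Y^*})(x \ot \eta) = Q^1.x \ot Q^2.\eta$, where the action on $Y^*$ is the standard dual $A$-module action $(b.\eta)(y) = \eta(S(b).y)$. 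This is the origin of the factor $S(Q^2)$ in formulas~(2)--(4).

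Now the four verifications are short. For~(1), applying the proposed formula gives $\fT(\iota_X(\xi \ot x))(a) = \iota_X(\xi \ot x)(av) = \xi((av).x) = \xi(a.(v.x)) = \iota_X(\xi \ot \theta_X(x))(a)$, which is exactly the defining equation. For~(2), with $\varphi = \iota_X(\xi \ot x)$ and $\psi = \iota_Y(\eta \ot y)$, the proposed formula yields
\[\varphi(aQ^1)\psi(S(Q^2)a') = \xi(a.(Q^1.x))\,\eta(S(Q^2)a'.y),\]
which matches $(\iota_X \ot \iota_Y) \circ (\id_{X^*} \ot (c_{Y^*,X} \circ c_{X,Y^*}) \ot \id_Y)$ applied to $\xi \ot x \ot \eta \ot y$ by the formula for the double braiding above together with the definition of the dual action. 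Formulas~(3) and~(4) are the same computation with one or both of the $\iota$'s replaced by an evaluation, reflecting that $\ev_X(\xi \ot x) = \xi(x) = \iota_X(\xi \ot x)(1)$ under the identification of $\K$ with the unit object.

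There is no real obstacle; the only thing to watch is bookkeeping with the two conventions that produce the signs/positions: the monodromy is $Q = R'R$ (not $RR'$), which determines the order of the factors in $Q^1 \ot Q^2 = R^2 \tilde R^1 \ot R^1 \tilde R^2$, and the dual action uses the antipode~$S$. I would also briefly note well-definedness in the other cases: the $A$-linearity of the proposed maps in~(2)--(4) is a consequence of the quasi-triangular identity $\Delta(a) Q = Q \Delta(a)$ for $a \in A$, which ensures that the outputs transform correctly under the coadjoint action(s) on $L$ and $L \ot L$. Once well-definedness and the diagrammatic identities are in place, uniqueness in the universal property of the coend (and of $L \ot L$, which by Paragraph~\ref{Coend} is itself the coend of $F(X,Y) \ot F(X',Y')$) completes the proof.
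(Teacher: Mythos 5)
Your proof is correct and follows essentially the same route as the paper: verify that the proposed right-hand sides satisfy the defining diagrams of the universal property on elementary tensors of the form $\iota_X(\xi \ot x)$ (and $\iota_Y(\zeta \ot y)$), then invoke uniqueness. Your explicit record of the double-braiding formula $(c_{Y,X}\circ c_{X,Y})(x\ot y)=Q^1.x\ot Q^2.y$ and the dual action producing the $S(Q^2)$ factor is exactly the computation the paper carries out inline. The only structural difference is that you propose to establish $A$-linearity of each candidate formula in a separate preliminary step (a), invoking the quasitriangularity relation $\Delta(a)Q=Q\Delta(a)$; the paper instead points out afterward that $A$-linearity is automatic, since the morphisms are characterized by factoring a dinatural transformation through the coend and are therefore necessarily morphisms in $\CC$ — so your step (a) is sound but not strictly needed once (b) is in place.
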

\begin{proof}
We prove the first assertion by showing that the right-hand side indeed has the required universal property. This holds since we have for an $A$-module~$X$ and $\xi \in X^*, x \in X$ that
\begin{align*}
((\iota_X \circ (\id_{X^*} \ot \theta_{X}))(\xi \ot x))(a) &= 
\iota_X(\xi \ot v.x)(a) = \xi(av.x) = \fT(\iota_X(\xi\ot x))(a).
\end{align*}
The proof of the second assertion is similar: If~$Y$ is another $A$-module and 
$\zeta \in Y^*$, $y \in X$ are elements, we have
\begin{align*}
&([(\iota_X \ot \iota_Y) \circ (\id_{X^*} \ot (c_{Y^*,X} \circ c_{X,Y^*}) \ot \id_Y)]
(\xi \ot x \ot \zeta \ot y)) (a \ot a')\\
&\quad=((\iota_X \ot \iota_Y)(\xi \ot Q^1.x \ot Q^2.\zeta \ot y))(a \ot a') =
\xi(aQ^1.x) (Q^2.\zeta)(a'.y) \\
&\quad= \xi(aQ^1.x) \zeta(S(Q^2)a'.y) = 
\fN'  (\iota_X(\xi \ot x) \ot \iota_Y(\zeta \ot y)) (a\ot a').
\end{align*}
From the first two assertions, we obtain that
\begin{align*}
\fN(\varphi \ot \psi)(a \ot a') &= \fN'(\fT(\varphi) \ot \fT(\psi))(a \ot a') \\
&= \varphi(a Q^1 v) \psi(S(Q^2) a' v) = \varphi(a Q^1 v) \psi(S(Q^2 v) a')
\end{align*}
where the last equality uses that~$v$ is central and invariant under the antipode. Using that~$\Delta(v) = Q (v \ot v)$, we arrive at the third assertion. The proofs of the fourth and the fifth assertion are again very similar to the proofs of the first and the second assertion.
\end{proof}
Because the antipode of a finite-dimensional Hopf algebra is bijective (cf.~\cite[Thm.~2.1.3, p.~18]{Mo}), the fifth assertion shows that~$\omega_L$ is nondegenerate if and only if the map~$\Phi$ introduced in Paragraph~\ref{Fact} is bijective. This shows that the modularity of~$\CC$ is equivalent to the factorizability of~$A$ (cf.~\cite[Par.~7.4.6, p.~332]{KL}).

\pagebreak

We note that, because these maps are defined via the universal property of the coend, they must automatically be morphisms in $\CC$, i.e., they must be $A$-linear. We also note that in the second and the third assertions, we have chosen a special identification of~$A^* \ot A^*$ 
with~$(A \ot A)^*$, which is determined by
$(\varphi \ot \psi)(a \ot a') = \varphi(a) \psi(a')$.

It is important to stress that the Hopf algebra structure on~$L$ is not the usual Hopf algebra structure on~$A^*$. In fact, $A^*$ is a Hopf algebra in the category of vector spaces, whereas~$L$ is a Hopf algebra in the category~$\CC$, which has a different braiding. The Hopf algebra structure on~$L$ is rather dual to the so-called transmuted Hopf algebra structure of~$A$ (cf.~\cite[Examp.~9.4.9, p.~504]{Maj}; \cite[Lem.~4.4, p.~499]{V}). However, there is a very direct relation between the integrals of~$A^*$ and~$L$: Because a factorizable Hopf algebra is unimodular (cf.~\cite[Prop.~3.7.4, p.~482]{L1} and~\cite[Prop.~12.4.2, p.~405f]{Ra}), there is a nonzero two-sided integral~$\Lambda_A \in A$. This leads to the two-sided integral
\[\lambda_L\colon L \to \1 = \K,~\varphi \mapsto \varphi(\Lambda_A).\]

On the other hand, a right integral~$\rho\colon A \to \K$ is by definition contained in~$L=A^*$, and it can be shown that the unique morphism $\Lambda_L\colon \1 = \K \to L$ that maps~$1_\K$ to~$\rho$ is even a two-sided integral (cf.~\cite[Prop.~6.6, p.~153]{BKLT}). As a consequence, the morphism~$\fS$ is given by the formula
\[\fS(\varphi)(a) = \varphi(Q^1) \rho(S(Q^2)a).\]


\subsection{The block spaces} \label{BlockSpc}
In topological field theory, one associates to a modular category projective representations of mapping class groups of surfaces. There is a vast literature on this topic; our approach here is based on a construction given by V.~Lyubashenko in his articles~\cite{L1} and~\cite{L2}. We will now briefly review some key aspects of his construction to the extent that we need them. Suppose that~$\CC$ is a category that satisfies the requirements stated in Paragraph~\ref{Fin}. We assume that each of the~$n$ boundary components of the surface~$\Sigma_{g,n}$ described in Paragraph~\ref{Surf} is labeled by an object~$X_i$ from~$\CC$. Associated with such a labeled surface is the space
\[\TFT(\Sigma_{g,n}^{X_1,\ldots, X_n}) \deq
\Hom_{\CC}(X_n \ot \cdots \ot X_1, L^{\ot g}),\]
where~$L$ denotes the coend defined in Paragraph~\ref{Coend}. This space, which we call the space of chiral conformal blocks or briefly the block space, obviously depends functorially on the labels and can therefore be viewed as the value of a left exact contravariant functor $\TFT(\Sigma_{g,n})$ on the object $(X_1,\ldots, X_n) \in \CC^n$, by which we mean that the functor is left exact in each argument (cf.~\cite[Par.~4.6, p.~130f]{P}).

The construction of a projective action of the mapping class group then starts from an oriented net (cf.~\cite[Sec.~6, p.~355]{L2} and \cite[Par.~4.1, p.~486f]{L1}). 
\begin{samepage}
We consider the following oriented net that encodes the structure of our surface: 
\begin{center}
\begin{tikzpicture}
\node[circle,fill,scale=0.3] (x1) at (-1,0) {};
\node[circle,fill,scale=0.3] (x2) at (0.1,0) {};
\node[circle,fill,scale=0.3] (x3) at (2.2,0) {};
\node[circle,fill,scale=0.3] (x4) at (3.3,0) {};
\node[circle,fill,scale=0.3] (x5) at (4.4,0) {};
\node[circle,fill,scale=0.3] (x6) at (5.5,0) {};
\node[circle,fill,scale=0.3] (x7) at (7.6,0) {};
\node[circle,fill,scale=0.3] (xe) at (8.7,0) {};
\node[circle,fill,scale=0.3] (x8) at (-1,-1) {};
\node[circle,fill,scale=0.3] (x9) at (0.1,-1) {};
\node[circle,fill,scale=0.3] (x10) at (2.2,-1) {};
\node[circle,fill,scale=0.3] (x11) at (3.3,-1) {};

\draw[->-] (4.4,1) to node[pos=0, above]{$X_n$} (x5);
\draw[->-] (5.5,1) to node[pos=0, above]{$X_{n-1}$} (x6);
\draw[->-] (7.6,1) to node[pos=0, above]{$X_2$} (x7);
\draw[->-] (8.7,1) to node[pos=0, above]{$X_1$} (xe);
\draw[-<-] (x1) to node[pos=0.5, above]{$Y_2$} (x2);
\draw (x2) to (0.6,0);
\node[scale=1.2] at (1.2,0) {$\cdots$};
\draw (1.7,0) to (x3);
\draw[-<-] (x3) to node[pos=0.5, above]{$Y_{g-1}$} (x4);
\draw[-<-] (x4) to node[pos=0.5, above]{$Y_g$} (x5);
\draw[-<-] (x5) to node[pos=0.5, above]{$Z_{n-1}$} (x6);
\draw (x6) to (6,0);
\node[scale=1.2] at (6.6,0) {$\cdots$};
\draw (7.1,0) to (x7);
\draw[-<-] (x7) to node[pos=0.5, above]{$Z_1$} (xe);
\draw[->-] (x1) to node[pos=0.5, right]{$N_1$} (x8);
\draw[->-] (x2) to node[pos=0.5, right]{$N_2$} (x9);
\draw[->-] (x3) to node[pos=0.5, right]{$N_{g-2}$} (x10);
\draw[->-] (x4) to node[pos=0.5, right]{$N_{g-1}$} (x11);
\draw[-<-] (x8) to node[pos=0.5, below]{$A_2$} (x9);
\draw (x9) to (0.6,-1);
\node[scale=1.2] at (1.2,-1) {$\cdots$};
\draw (1.7,-1) to (x10);
\draw[-<-] (x10) to node[pos=0.5, below]{$A_{g-1}$} (x11);
\draw (x1) to (-1.6,0);
\draw[-<-] (-1.6,0) arc (90:270:5mm)  node[pos=0.5, left]{$A_1$};
\draw (-1.6,-1) to (x8);
\draw (x7) to (9.2,0);
\draw[-<-] (9.2,-1) arc (-90:90:5mm) node[pos=0.5, right]{$A_g$};
\draw (x11) to (9.2,-1);
\end{tikzpicture}
\end{center}
\end{samepage}

The block space arises from the oriented net by taking suitable coends over internal edges (cf.~\cite[Par.~8.2, p.~374]{L2}). For our chosen net, the construction gives the space
\begin{align*} 
&T_1 \deq \\
&\int^{A_i,N_j,Z_l,Y_m} \Hom(A_1 \ot Y_2,N_1) \ot \Hom(Y_3,Y_2 \ot N_2)\ot \dots\ot  \Hom(Y_g,Y_{g-1}\ot  N_{g-1}) \\
&\qquad \ot  \Hom(X_n\ot  Z_{n-1},Y_g)\ot \Hom(X_{n-1} \ot Z_{n-2},Z_{n-1})\ot \dots \ot  \Hom(X_2 \ot Z_1, Z_2) \\
&\qquad \ot \Hom(X_1,Z_1\ot  A_g)\ot  \Hom(N_{g-1} \ot A_g,A_{g-1}) \ot \dots \ot \Hom(N_1\ot  A_2,A_1) 
\end{align*}
where in the upper limit of the integral sign the abbreviation~$A_i$ has been used for~$A_1, A_2, \dots, A_g$. Similarly, the abbreviations
$N_j$, $Z_l$, and~$Y_m$ have been used for the objects with indices $j=1,\dots,g-1$, $l=1,\dots,n-1$, and $m=2,\dots,g$. The coends are taken in the category of left exact functors:
Obviously, the argument of the coend depends functorially on the objects, and the corresponding functor is left exact. A more detailed discussion of this aspect can be found in~\cite[App.~B, p.~398]{L2} and \cite[Sec.~3, p.~72ff]{FS}.

In view of \cite[Lem.~B.1, p.~398]{L2} and the functor adjunctions discussed in Paragraph~\ref{Fin}, the space~$T_1$ is naturally isomorphic to
\begin{align*}
T_2 \deq &\int^{A_i, N_j, Y_2, Y_g, Z_1} 
\Hom(Y_2,\prescript{*}{}{A_1}\ot  N_1) \ot \Hom(Y_g,Y_2\ot  N_2 \ot \dots\ot  N_{g-1}) \\
&\qquad\ot \Hom(X_n \ot \dots \ot X_2 \ot  Z_1,Y_g)\ot \Hom(X_1\ot \prescript{*}{}{A_g},Z_1) \\
&\qquad\ot  \Hom(N_{g-1},A_{g-1} \ot A_g^*) \ot \dots \ot \Hom(N_1,A_1 \ot  A_2^*). 
\end{align*}
Taking also the coend over~$Y_g$, we obtain a natural isomorphism with the space
\begin{align*}
T_3 \deq & \int^{A_i,N_j,Z_1,Y_2} \Hom(Y_2,\prescript{*}{}{A_1}\ot  N_1) \nonumber\\
&\qquad\ot \Hom(X_n\ot \dots\ot  X_2\ot  Z_1,Y_2\ot  N_2\ot \dots\ot  N_{g-1})\ot  \Hom(X_1\ot \prescript{*}{}{A_g},Z_1) \nonumber\\
&\qquad\ot  \Hom(N_{g-1},A_{g-1}\ot  A_g^*)\ot \dots\ot \Hom(N_1,A_1\ot  A_2^*). 
\end{align*}
Treating~$N_1, \ldots, N_{g-1}$ in the same way and moreover taking the coend over~$Z_1$, we obtain the space 
\begin{align*}
T_4 \deq & \int^{A_i,Y_2}\Hom(Y_2,\prescript{*}{}{A_1}\ot  A_1\ot  A_2^*) \\
&\qquad \ot \Hom(X_n \ot \dots \ot X_1\ot  \prescript{*}{}{A_g},Y_2\ot  A_2\ot  A_3^* \ot \dots \ot A_{g-1}\ot  A_g^*). 
\end{align*}
Finally, if we also take the coend over~$Y_2$ and apply the functor adjunctions to~$A_g$, we arrive at the space 
\begin{align*}
&T_5 \deq \int^{A_i}\Hom(X_n \ot \dots \ot X_1, 
\prescript{*}{}{A_1} \ot A_1 \ot A_2^* \ot A_2 \ot \dots \ot A_{g-1} \ot A_g^* \ot A_g). 
\end{align*}
It is now important to realize that we have introduced right duals in Paragraph~\ref{Fin} in such a way that left and right duals have the same underlying object. They differ only in their  evaluation and coevaluation morphisms, which affect the functor adjunctions, but not the objects themselves. The space~$T_5$ is therefore also equal to 
\begin{align*}
&T_5 = \int^{A_i}\Hom(X_n \ot \dots \ot X_1, A_1^* \ot A_1 \ot \dots \ot A_g^* \ot A_g).
\end{align*}
Because the coends are taken in the category of left exact functors, the space~$T_5$ is naturally isomorphic to our block space~$\Hom(X_n \ot \dots \ot X_1, L^{\ot  g})$ defined above (cf.~\cite[Prop.~3.4, p.~74]{FS}).

We have claimed that the specific oriented net given above encodes the structure of the surface introduced in Paragraph~\ref{Surf}. We now explain this relation in the special case~$g=3$ and~$n=2$; the general case is not essentially different. The first step consists of the application of the fattening functor (cf.~\cite[p.~341]{L2}), which turns the net into a so-called ribbon graph by replacing each trivalent vertex by a hexagon in which every second side is represented by a double line. These hexagons are arranged so that their double lines come to lie on the edges of the net, and if an edge connects two vertices, the corresponding hexagons are glued along their now common double line. 

\pagebreak

If we apply this procedure to our net above, we obtain the ribbon graph
\begin{center}
\begin{tikzpicture}
\fill[gray!10] (-3,-0.5) to[out=90,in=180] (0,1) to (1.75,1) to (2.25,1.5)
to (3,1.5) to (3.5,1) to (4,1.5) to (4.75,1.5) to[out=300,in=90] (5.25,-0.5) to[out=270, in=0] (0,-2) to[out=180, in=270] (-3,-0.5);
\draw (-3,-0.5) to[out=90,in=180] (0,1) to (1.75,1) to (2.25,1.5);
\draw[double, violet] (2.25,1.5) to node[pos=0.5, above]{$X_2$} (3,1.5);
\draw (3,1.5) to (3.5,1) to (4,1.5);
\draw[double, violet] (4,1.5) to node[pos=0.5, above]{$X_1$} (4.75,1.5);
\draw (4.75,1.5) to[out=300,in=90] (5.25,-0.5) to[out=270, in=0] (0,-2) to[out=180, in=270] (-3,-0.5);

\draw[green!80!lime] (0,1) to node[pos=0.5, right]{$Y_2$} (0,0);
\draw[green!80!lime] (1.75,1) to node[pos=0.5, right]{$Y_3$} (1.75,0);
\draw[cyan] (3.5,1) to node[pos=0.5, right]{$Z_1$} (3.5,0);
\draw[red!10!orange] (-1.25,-0.5) to node[pos=0.5, below]{$N_1$} (-0.5,-0.5);
\draw[red!10!orange] (0.5,-0.5) to node[pos=0.5, below]{$N_2$} (1.25,-0.5);
\draw[black!10!red] (-3,-0.5) to node[pos=0.5, below]{$A_1$} (-2.25,-0.5);
\draw[black!10!red] (0,-1) to node[pos=0.5, right]{$A_2$} (0,-2);
\draw[black!10!red] (4,-0.5) to node[pos=0.5, below]{$A_3$} (5.25,-0.5);

\draw[fill=white] (1.25,-0.5) to[out=90,in=180] (1.75,0) to (3.5,0) to[out=0, in=90] (4,-0.5) to[out=270, in=0] (3.5,-1) to (1.75,-1) to[out=180, in=270] (1.25,-0.5);

\draw[fill=white] (-2.25,-0.5) to[out=90, in= 180] (-1.75,0) to[out=0,in=90] (-1.25,-0.5) to[out=270, in=0] (-1.75,-1) to[out=180, in=270] (-2.25,-0.5);
\draw[fill=white] (-0.5,-0.5) to[out=90, in= 180] (0,0) to[out=0,in=90] (0.5,-0.5) to[out=270, in=0] (0,-1) to[out=180, in=270] (-0.5,-0.5);
\end{tikzpicture}
\end{center}
in which the colored lines indicate the glued double lines.

The second step consists of the application of the duplication functor (cf.~\cite[Par.~2.2 and Par.~2.3, p.~315ff]{L2}). The duplication functor turns a ribbon graph into a surface by taking two copies of the graph and gluing them along the boundary components that do not arise from the double lines mentioned above. Applied to our ribbon graph, it yields a surface of genus~$3$ that has two boundary components, which arise from the two double lines still present in our ribbon graph and are labeled by~$X_1$ and~$X_2$. If we deform the arising surface slightly and bring the first and the third handle closer together by pulling the middle handle to the bottom, we obtain the standard surface
\begin{center}
\fontsize{9}{9}\selectfont
\begin{tikzpicture}[scale=0.45]
\pic[scale=0.45] (lower) at (0,-pi) {handle}; 
\pic[rotate=120,scale=0.45] (tr) at (30:pi) {handle};
\pic[rotate=-120,scale=0.45] (tl) at (150:pi) {handle};
\fill[gray!10]  (lower-right) to[out=100,in=200] (tr-left)-- 
(tr-right) to[out=-140,in=-40] (tl-left)
-- (tl-right) to[out=-20,in=80] (lower-left) -- cycle;
\draw (lower-right) to[out=100,in=200] (tr-left);
\draw (tr-right) to[out=-140,in=-40] (tl-left);
\draw (tl-right) to[out=-20,in=80] (lower-left);

\node[below] at (lower-num) {2}; 
\node[above right] at (tr-num) {3}; 
\node[above left] at (tl-num) {1}; 

\draw[black!10!red] 
(0,-5.94) to[bend right] (lower-num);
\draw[black!10!red,dash pattern=on 2pt off 2pt] (lower-num) to[bend right](0,-5.94);

\draw[rotate=120,black!10!red] 
(0,-6.12) to[bend right] (tr-num);
\draw[rotate=120,black!10!red,dash pattern=on 2pt off 2pt] (tr-num) to[bend right](0,-6.12);

\draw[rotate=240,black!10!red] 
(0,-6.1) to[bend right] (tl-num);
\draw[rotate=240,black!10!red,dash pattern=on 2pt off 2pt] (tl-num) to[bend right](0,-6.1);

\draw[violet,rotate=-180, 
postaction={decorate},fill=lightgray] (-0.6,-2.25)  circle (10pt) node {$\scriptstyle{X_1}$} ;
\draw[violet,rotate=-180,
postaction={decorate},fill=lightgray] (0.6,-2.25) circle (10pt) node {$\scriptstyle{X_2}$} ;

\draw[red!10!orange] (0.1,-5.35) to[bend left] (5,2.52); 
\draw[red!10!orange,dash pattern=on 2pt off 2pt] (0.1,-5.35) to[out=75, in=235] (2.1,-1.1) to[out=55, in=225] (5,2.52);
\draw[red!10!orange] (-0.1,-5.35) to[bend right] (-5,2.52);
\draw[red!10!orange,dash pattern=on 2pt off 2pt] (-0.1,-5.35) to[out=105, in=305] (-2.1,-1.1) to[out=125, in=315] (-5,2.52);

\draw[cyan] (0,2.67) to[out=260, in=160] (0.4,1.4) to[out=340, in=210] (4,2.75);
\draw[cyan,dash pattern=on 2pt off 2pt] (0,2.67) to[out=270, in=160] (0.6,1.7) to[out=340, in=200] (4,2.75);
\draw[green!80!lime] (-1.2,2.95) to[out=260, in=160] (-0.7, 0.8) to[out=340, in=210] (4.43,2.6);
\draw[green!80!lime,dash pattern=on 2pt off 2pt] (-1.2,2.95) to[out=270, in=160] (-0.5, 1.1) to[out=340, in=200] (4.43,2.6);

\scalebox{-1}[1]{
\draw[green!80!lime] (2.2,3.8) to[out=270, in=60] (0.9, -0.5) to[out=240, in=90] (0,-5.33);
\draw[green!80!lime,dash pattern=on 2pt off 2pt] (2.2,3.8) to[out=270, in=60] (0.5, -0.5) to[out=240, in=90] (0,-5.33);}
\end{tikzpicture}
\end{center}
described in Paragraph~\ref{Surf}, in which the curves arise from the duplication of the lines in the ribbon graph that have the same color. It should be noted that the red curves are freely homotopic to the curves denoted by $\alpha_1$, $\alpha_2$, and $\alpha_3$ in Paragraph~\ref{Surf}, and the orange curves are freely homotopic to the curves~$\mu_1$ and~$\mu_2$ from Paragraph~\ref{FundGroupGen}, respectively.

It is possible to compare this discussion with the corresponding one in \cite[Par.~4.5, p.~494f]{L1}. To do that, it is necessary to deform the surface given in \cite[Fig.~6, p.~495]{L1} by pulling down the middle handles to the bottom in order to bring the first and the $g$th handle together at the top, exactly as in our discussion of the relation between the net and the surface above. This leads to a surface in which the handles are labeled counterclockwise, but the boundary components are labeled clockwise, in contrast to the surface used in Paragraph~\ref{Surf}, where the boundary components arose from the polygon and therefore were also labeled counterclockwise. As a consequence, the order of the labels~$X_1,\ldots,X_n$ is reversed. The following table explains the relation between our surface and the surface in \cite[Fig.~2, p.~490 and Fig.~6, p.~495]{L1}:
\begin{center}
\renewcommand{\arraystretch}{1.21}
\begin{tabular}{|c|c|c|c|c|c|} \hline
Curves & $\alpha_{i}$ & $\beta_i$ & $\mu_j$ & $\zeta_l$ & $\partial_k$ \\ \hline 
Color & \textcolor{black!10!red}{red} & \textcolor{black!10!blue}{blue} & \textcolor{red!10!orange}{orange} & \textcolor{cyan}{turquoise} & \textcolor{violet}{purple}\\ \hline 
Label in the net & $A_i$ & &  $N_j$ & $Z_l$ & $X_k$ \\ \hline 
Dehn twists & $\ft_i$ & $\fr_i$ & $\fn_j$ & $\fz_l$ & $\fd_k$\\ \hline 
\cite{L1} & $e_{i}$ & $b_i$ & $a_{j+1}$ & $t_{n-l,g}$ & $R_{g-k+1}$\\ \hline 
\end{tabular}
\end{center}
Here, the index ranges are $i=1,\dots,g$, $j=1,\dots,g-1$, $k=1,\dots,n$, and \mbox{$l=1,\dots,n-1$}. The light green curves in our surface correspond to the curves labeled~$d_j$ in \cite[loc.~cit.]{L1}.

\subsection{Mapping class group representations} \label{MapClRep}
As we just explained, the theory associates with a surface a certain left exact functor. But in addition, the theory associates with a mapping class~$[\psi] \in \Gamma_{g,n}$ a projective class~$[\TFT(\psi)] = P(\TFT(\psi))$ of natural equivalences between two of these functors, namely the functor
\[(X_1, \ldots, X_n) \mapsto \TFT(\Sigma_{g,n}^{X_1, \ldots, X_n})\]
on the one hand and the functor
\[(X_1, \ldots, X_n) \mapsto \TFT(\Sigma_{g,n}^{X_{\tau(1)},\ldots, X_{\tau(n)}})\]

on the other hand, where~$\tau \deq p([\psi])^{-1} \in S_n$ is the inverse permutation of the marked points introduced in Paragraph~\ref{MapClGr}. A representative~$\TFT(\psi)$ of this projective class is a natural equivalence between these functors, i.e., a family of linear isomorphisms
\[\TFT(\psi) \colon \TFT(\Sigma_{g,n}^{X_1, \ldots, X_n}) \to 
\TFT(\Sigma_{g,n}^{X_{\tau(1)},\ldots, X_{\tau(n)}})\]
that are natural in the following sense: If, for $i=1,\ldots,n$, we are given morphisms $f_i\colon X_i \to Y_i$, then the diagram 
\begin{center}
\begin{tikzcd}
\TFT(\Sigma_{g,n}^{Y_1,\ldots, Y_n}) \arrow[r, "\TFT(\psi)"]
\arrow[d, "\circ (f_n \ot \dots \ot f_1)"']  & 
\TFT(\Sigma_{g,n}^{Y_{\tau(1)},\ldots, Y_{\tau(n)}}) \arrow[d, "\circ (f_{\tau(n)} \ot \dots \ot f_{\tau(1)})"]\\
\TFT(\Sigma_{g,n}^{X_1,\ldots, X_n}) \arrow[r, "\TFT(\psi)"] & 
\TFT(\Sigma_{g,n}^{X_{\tau(1)},\ldots, X_{\tau(n)}})
\end{tikzcd}
\end{center}
commutes. Clearly, a nonzero scalar multiple of such a natural equivalence is again a natural equivalence, and the set of all its nonzero scalar multiples constitutes its projective class~$[\TFT(\psi)] = P(\TFT(\psi))$. 

This assignment is also compatible with composition in the sense that, for a second mapping class~$[\phi]$, we have $[\TFT(\phi \circ \psi)] = [\TFT(\phi) \circ \TFT(\psi)]$. However, this does not imply that the mapping class group~$\Gamma_{g,n}$ acts projectively on each block space $\TFT(\Sigma_{g,n}^{X_1,\ldots, X_n})$, because the block spaces are not preserved by~$\TFT(\psi)$ if~$[\psi]$ permutes two boundary components with different labels. In general, only the pure mapping class group~$\Pu\Gamma_{g,n}$ acts projectively on each block space. However, the entire mapping class group~$\Gamma_{g,n}$ acts projectively on the direct sum
\[\bigoplus_{\tau \in S_n} \TFT(\Sigma_{g,n}^{X_{\tau(1)},\ldots, X_{\tau(n)}})\]
of block spaces. In the case where all boundary components are labeled with the same object~$X$, we also have an action of~$\Gamma_{g,n}$ on the space~$\Hom_\CC(X^{\ot n}, L^{\ot g})$. We can in fact realize the direct sum above as a subspace of this space by setting 
$X \deq X_1 \oplus \dots \oplus X_n$.

Via a sophisticated system of rules laid out in~\cite{L1} and~\cite{L2}, the description of a surface in terms of a net makes it possible to describe the actions of the elements of the mapping class group on the block spaces explicitly. According to this formalism, the elements of the mapping class group introduced in Paragraph~\ref{Dehn}, Paragraph~\ref{Braid}, Paragraph~\ref{SpecDehn}, and Paragraph~\ref{MoreSpecDehn} act on the block spaces as follows:
\begin{parlist}
\item \label{D}
For $j = 1,\ldots,n$, the Dehn twist~$\fd_j$ acts by precomposition with the morphism
$\id_{X_n \ot \dots \ot X_{j+1}} \ot \theta_{X_j} \ot \id_{X_{j-1} \ot \dots \ot X_1}$,
i.e., via the map
\begin{align*}
\TFT(\fd_j)\colon \Hom_{\CC}(&X_n \ot \cdots \ot  X_1, L^{\ot g}) \to 
\Hom_{\CC}(X_n \ot \cdots \ot  X_1, L^{\ot g}) \\
&f \mapsto f \circ (\id_{X_n \ot \dots \ot X_{j+1}} \ot \theta_{X_j} \ot \id_{X_{j-1} \ot \dots \ot X_1}).
\end{align*}

\item \label{B}
For $j = 1,\ldots,n-1$, the braiding~$\fb_{j,j+1}$ acts by precomposition with the morphism
$\id_{X_n \ot \dots \ot X_{j+2}} \ot c_{X_{j}, X_{j+1}} \ot \id_{X_{j-1} \ot \dots \ot X_1}$,
i.e., it induces a map
\begin{align*}
\TFT(\fb_{j,j+1})\colon \Hom_{\CC}(&X_n \ot \cdots \ot X_{j+1} \ot X_j \ot  \cdots \ot  X_1, L^{\ot g}) \\
&\mspace{100mu} \to 
\Hom_{\CC}(X_n \ot \cdots \ot X_{j} \ot X_{j+1} \ot  \cdots \ot  X_1, L^{\ot g}) \\
&f \mapsto f \circ (\id_{X_n \ot \dots \ot X_{j+2}} \ot c_{X_{j}, X_{j+1}} \ot \id_{X_{j-1} \ot \dots \ot X_1}).
\end{align*}

\item \label{DD}
For $j = 1,\ldots,n-1$, the element~$\fd_{j,j+1}$ acts by precomposition with the morphism
$\id_{X_n \ot \dots \ot X_{j+2}} \ot \theta_{X_{j+1} \ot X_j} \ot \id_{X_{j-1} \ot \dots \ot X_1}$, a map that we denote by~$\TFT(\fd_{j,j+1})$.

\item \label{Q}
For $j = 1,\ldots,n-1$, the element~$\fq_{j,j+1}$ acts by precomposition with the morphism
$\id_{X_n \ot \dots \ot X_{j+2}} \ot (c_{X_j,X_{j+1}}\circ c_{X_{j+1},X_j}) \ot \id_{X_{j-1} \ot \dots \ot X_1}$, a map that we denote by~$\TFT(\fq_{j,j+1})$.

\item \label{T}
For $i = 1,\ldots,g$, the Dehn twist $\ft_i$ acts by postcomposition with the morphism
$\id_{L^{\ot (i-1)}} \ot \fT \ot \id_{L^{\ot (g-i)}}$, i.e., via the map
\begin{align*}
\TFT(\ft_i)\colon \Hom_{\CC}(&X_n \ot \cdots \ot  X_1, L^{\ot g}) \to 
\Hom_{\CC}(X_n \ot \cdots \ot  X_1, L^{\ot g}) \\
&f \mapsto (\id_{L^{\ot (i-1)}} \ot \fT \ot \id_{L^{\ot (g-i)}}) \circ f.
\end{align*}

\item \label{S}
For $i = 1,\ldots,g$, the element $\fs_i$ acts by postcomposition with the morphism 
$\id_{L^{\ot (i-1)}} \ot \fS \ot \id_{L^{\ot (g-i)}}$, a map that we denote by~$\TFT(\fs_i)$.

\item \label{N}
For $i=1,\ldots,g-1$, the Dehn twist $\fn_i$ acts by postcomposition with the morphism
$\id_{L^{\ot (i-1)}} \ot \fN \ot \id_{L^{\ot (g-i-1)}}$, a map that we denote by~$\TFT(\fn_i)$.

\item \label{Z}
For $j=1,\dots,n-1$, the Dehn twist $\fz_j$ acts via the map 
\[\TFT(\fz_j) \colon \Hom_{\CC}(X_n \ot \cdots \ot  X_1, L^{\ot g}) \to 
\Hom_{\CC}(X_n \ot \cdots \ot  X_1, L^{\ot g})\]
which is the unique linear map that makes the diagram
\begin{center}
\begin{tikzcd}
\Hom_{\CC}(X_j \! \ot \cdots \ot \! X_1, 
\prescript{*}{}{(X_n \ot \dots \ot X_{j+1})} \ot  L^{\ot g}) \arrow[r] \arrow[d] & \Hom_{\CC}(X_n \ot \cdots \ot  X_1, L^{\ot g}) \arrow[d, "\TFT(\fz_j)"] \\
\Hom_{\CC}(X_j \! \ot \cdots \ot \! X_1, 
\prescript{*}{}{(X_n \ot \dots \ot X_{j+1})} \ot  L^{\ot g}) \arrow[r] & \Hom_{\CC}(X_n \ot \cdots \ot  X_1, L^{\ot g})
\end{tikzcd}
\end{center}
commutative. In this diagram, the horizontal arrows are the functor adjunctions from Paragraph~\ref{Fin}, and the left vertical map is given by
\begin{align*}
&h \mapsto (\theta_{\prescript{*}{}{(X_n \ot \dots \ot X_{j+1})} \ot L^{\ot (g-1)}} \ot \fT)
\circ\fN^l_{\prescript{*}{}{(X_n\ot \dots \ot X_{j+1})} \ot L^{\ot (g-1)},L} \circ h.
\end{align*}
\end{parlist}

We now explain how some of the formulas come about. For any edge of a trivalent net, there is an automorphism called the twist (cf.~\cite[Par.~4.3, p.~350]{L2}) that corresponds to a Dehn twist around the corresponding curve on the surface (cf.~\cite[Prop.~2.2, p.~319]{L2}). Recall that, as already mentioned in Paragraph~\ref{Dehn}, Dehn twists as defined here are inverse Dehn twists as defined in~\cite{L1} and~\cite{L2}. By~\cite[Par.~8.1, No.~(x), p.~374]{L2}, the twist acts by applying the ribbon twist to the corresponding variable. Because the isomorphisms between the spaces~$T_1$ to~$T_5$ introduced in Paragraph~\ref{BlockSpc} and our block space are natural, this implies Claim~\ref{D}.
Similarly, the generators~$\ft_i$,~$\fn_j$, and~$\fz_{l}$ act by applying a ribbon twist to the internal variables $A_i$, $N_j$, and~$Z_l$ (cf.~\cite[p.~493]{L1}). We calculate the action of these generators explicitly in the case $g=3$ and $n=2$ to explain Claim~\ref{T}, Claim~\ref{N}, and Claim~\ref{Z}.

We first calculate the action of $\ft_1$: On the space~$T_1$, it acts by postcomposition with the twist~$\theta_{A_1}$ on the last tensor factor, which corresponds to postcomposition with $\theta_{A_1} \ot \id_{A_2^*}$ on the last tensor factor on the spaces~$T_2$ and~$T_3$. Under the isomorphism to the space~$T_4$, this corresponds to postcomposition with  $\id_{\prescript{*}{}{A_1}}\ot \theta_{A_1}\ot \id_{A_2^*}$ on the first tensor factor. 
This in turn corresponds  on the space~$T_5$ to postcomposition with 
$\id_{\prescript{*}{}{A_1}} \ot \theta_{A_1} \ot \id_{A_2^* \ot A_2 \ot A_3^* \ot A_3}$. In view of the definition of~$\fT$ in Paragraph~\ref{Coend}, this becomes postcomposition with $\fT\ot \id_{L\ot  L}$ on our block space $\Hom(X_2 \ot X_1, L \ot L \ot L)$. Very similar calculations yield the claim for~$\ft_2$ and~$\ft_3$.

Next, we calculate the action of $\fn_1$: On the space~$T_1$, it acts by postcomposition with the twist~$\theta_{N_1}$ on the first tensor factor. Under the isomorphism to the space~$T_2$, this corresponds to postcomposition with $\id_{\prescript{*}{}{A_1}} \ot \theta_{N_1}$ on the first tensor factor, which does not change under the isomorphism to the space~$T_3$. On the space~$T_4$, it becomes postcomposition with $\id_{\prescript{*}{}{A_1}} \ot \theta_{A_1 \ot A_2^*}$ on the first tensor factor. Under the isomorphism to the space~$T_5$, this corresponds to postcomposition with 
$\id_{\prescript{*}{}{A_1}} \ot \theta_{A_1 \ot A_2^*} \ot \id_{A_2 \ot A_3^*}$. This is equal to postcomposition with 
$\id_{\prescript{*}{}{A_1}} \ot ((\theta_{A_1} \ot \theta_{A_2^*}) \circ c_{A_2^*,A_1} \circ c_{A_1,A_2^*})\ot \id_{A_2\ot  A_3^*\ot  A_3}$,
which corresponds on our block space~$\Hom(X_2 \ot X_1, L \ot L \ot L)$ to postcomposition with $\fN \ot \id_L$. The calculations for $\fn_2$ are similar.

For the action of $\fz_{1}$, we proceed differently: On the space~$T_1$, it acts by postcomposition with~$\theta_{Z_1} \ot \id_{A_3}$ on the 
tensor factor~$\Hom(X_1, Z_1 \ot A_3)$, which on the spaces~$T_2$ and~$T_3$ becomes postcomposition with~$\theta_{Z_1}$  on the fourth and the third tensor factor, respectively. Under the isomorphism to the space~$T_4$, this corresponds to precomposition with $\id_{X_2} \ot \theta_{X_1 \ot \prescript{*}{}{A_3}}$. This space is isomorphic to 
\[\int^{A_i}\Hom(X_1\ot \prescript{*}{}{A_3},\prescript{*}{}{X_2} \ot A_1^* \ot A_1 \ot A_2^* \ot A_2 \ot A_3^*),\]
where the action becomes precomposition with $\theta_{X_1\ot \prescript{*}{}{A_3}}$, which equals postcomposition with $\theta_{\prescript{*}{}{X_2} \ot A_1^* \ot A_1 \ot A_2^* \ot A_2 \ot A_3^*}$. Under the isomorphism to the space 
\[\int^{A_i}\Hom(X_1,\prescript{*}{}{X_2} \ot A_1^* \ot A_1 \ot A_2^* \ot A_2 \ot A_3^* \ot A_3),\]
it becomes postcomposition with $\theta_{\prescript{*}{}{X_2} \ot A_1^* \ot A_1 \ot A_2^* \ot A_2 \ot A_3^*}\ot \id_{A_3}$, which equals postcomposition with 
\[((\theta_{\prescript{*}{}{X_2} \ot A_1^* \ot A_1 \ot A_2^* \ot A_2}\ot \theta_{A_3^*})\circ c_{A_3^*,\prescript{*}{}{X_2} \ot A_1^* \ot A_1 \ot A_2^* \ot A_2}\circ c_{\prescript{*}{}{X_2} \ot A_1^* \ot A_1 \ot A_2^* \ot A_2,A_3^*}) \ot \id_{A_3}.\]
Under the isomorphism to the space $\Hom(X_1,\prescript{*}{}{X_2} \ot L^{ \ot 3})$, this equals postcomposition with $(\theta_{\prescript{*}{}{X_2} \ot L \ot L} \ot \fT)\circ \fN^l_{\prescript{*}{}{X_2} \ot L \ot L,L}$. But this means precisely that $\TFT(\fz_1)$ acts as asserted on our block space $\Hom(X_2 \ot X_1,L^{\ot 3})$.

Using a fusing morphism (cf.~\cite[Eq.~(6.8), p.~359]{L2}), the braiding of two neighboring boundary components corresponds to a braiding morphism of a trivalent net (cf.~\cite[Par.~4.3, p.~350f]{L2}), which in turn corresponds to the braiding of the corresponding labels of the boundary components on our block space (cf.~\cite[Par.~8.1, No.~(viii)~and~(ix), p.~373f]{L2}). In this way, we arrive at Claim~\ref{B} and Claim~\ref{Q}. Claim~\ref{DD} follows from Claim~\ref{D}, because we have seen in Paragraph~\ref{MoreSpecDehn} that 
$[\fq_{j,j+1}] = [\fd_{j,j+1} \fd_j^{-1} \fd_{j+1}^{-1}]$, and in a ribbon category we have
\[c_{X_j,X_{j+1}} \circ c_{X_{j+1},X_j} = 
(\theta_{X_{j+1}}^{-1} \ot \theta_{X_j}^{-1}) \circ \theta_{X_{j+1} \ot X_j}
= \theta_{X_{j+1} \ot X_j} \circ (\theta_{X_{j+1}}^{-1} \ot \theta_{X_j}^{-1}). \]

The element~$\fs_i$ corresponds to a switch (cf.~\cite[Prop.~2.2, p.~319]{L2}) that acts by~$\fS$ on the corresponding tensor factor of~$L^{\ot g}$ (cf.~\cite[Prop.~8.8, p.~391]{L2}). This establishes Claim~\ref{S} and concludes our discussion about how the different elements of the mapping class group act.

It is important to note that, as we use a different net for the description of the surface, our formulas for the actions of the generators of the mapping class group on the block space are different from those in~\cite[Par.~4.5, p.~494f]{L1}. Starting from the net in~\cite[Fig.~5, p.~491]{L1} and using the internal variables called there~$E_i$ instead of those called there~$D_i$ to form the coends in the last step would give formulas similar to ours. 
\pagebreak

To facilitate the comparison, we include a small table that relates our notation to the one used in~\cite{L1}:
\begin{center}
\begin{tabular}{|c|c|c|c|c|c|c|c|c|c|} \hline
Present notation & $\Gamma_{g,n}$ & $L$ & $\fN'$ & $\fN^l_{X,L}$ & $\fS$ & $\fT$ & $\fb_{l,l+1}$ & $\fq_{l,l+1}$  & $\fs_i$ \\ \hline 
\cite{L1} & $M'_{g,n}$ & ${\mathbf f}$ & $\Omega$ & $\Omega^l_{X,\mathbf{f}}$ & $S$ & $T$ & $\omega_l$ & $\omega_l^2$ &  $S_i$ \\ \hline 
\end{tabular}
\end{center}
It must be emphasized that the morphisms in the first row are only analogous, but not strictly equal, to the ones in the second row. 

In the case where~$n=1$, i.e., when there is only one boundary component, the projective action on the block space can be obtained by postcomposition from a projective action 
of~$\Gamma_{g,1}$ on~$L^{\ot g}$: For a mapping class~$[\psi] \in \Gamma_{g,1}$, a representative~$\TFT(\psi)$ of the associated projective class~$[\TFT(\psi)]$ is a natural equivalence from the contravariant Hom-functor 
\[X \mapsto \TFT(\Sigma_{g,1}^{X}) = \Hom_{\CC}(X, L^{\ot g})\]
to itself, which according to the Yoneda lemma (cf.~\cite[Chap.~III, Sec.~2, p.~61]{ML2}; \cite[Par.~1.15, p.~37]{P}) is given by postcomposition with an automorphism of~$L^{\ot g}$. For the generators of the mapping class group whose projective action is described in the list above, this can be seen explicitly: The only generator that appears in this case and is not already given by postcomposition is~$\fd_1$, which according to Claim~\ref{D} is given by precomposition with~$\theta_X$. However, by the naturality of the twist, this is equal to postcomposition with~$\theta_{L^{\ot g}}$.

\subsection{Modular functors} \label{ModFunct}
The various mapping class group representations just described are not unrelated, but rather together form a modular functor (cf.~\cite[Par.~8.1, p.~372ff]{L2}). In particular, they are compatible with gluing of surfaces (cf.~\cite[Par.~8.1, No.~(xi), p.~374]{L2}). We will need only one very special instance of this general property: Consider the last two boundary components of the surface~$\Sigma_{g,n+2}$. They carry an orientation that is induced from the orientation of the surface, which is opposite to the orientation of the curves~$\rho_{n+1}$ and~$\rho_{n+2}$ in Paragraph~\ref{Surf}. Up to isotopy, there is a unique orientation-reversing diffeomorphism between these two boundary components that maps the distinguished point on one boundary component to the distinguished point on the other. If we identify the two boundary components along this diffeomorphism, the arising quotient space is diffeomorphic to~$\Sigma_{g+1,n}$, so that we have effectively attached a handle, which we consider as the first one. A diffeomorphism 
\mbox{$\varphi \colon \Sigma_{g,n+2} \to \Sigma_{g,n+2}$} that restricts to the identity on the last two boundary components then induces a diffeomorphism \mbox{$\psi \colon \Sigma_{g+1,n} \to \Sigma_{g+1,n}$}. For~$\tau \deq p([\varphi])^{-1} \in S_{n+2}$, our hypothesis means that~$\tau$ fixes~$n+1$ and~$n+2$.

Suppose we are given $n$ objects~$X_1,\ldots,X_n$ and another object~$X$. From the adjunctions recalled in Paragraph~\ref{Fin}, we get an isomorphism
\[\Hom_\CC(X^{*} \ot X^{**} \ot X_n \ot \dots \ot X_1, L^{\ot g}) \to 
\Hom_\CC(X_n \ot \dots \ot X_1, X^* \ot X \ot L^{\ot g}).\]
If we postcompose with $\iota_X \ot \id_{L^{\ot g}}$, we obtain a morphism
\[\Hom_\CC(X^* \ot X^{**} \ot X_n \ot \dots \ot X_1, L^{\ot g}) \to 
\Hom_\CC(X_n \ot \dots \ot X_1, L^{\ot (g+1)})\]
that we call the handle gluing homomorphism. By construction, this homomorphism is 
natural with respect to the objects~$X_1, \ldots, X_n$ of~$\CC$ in the sense that for given morphisms $f_i\colon X_i \to Y_i$, the diagram 
\begin{center}
\begin{tikzcd}
\TFT(\Sigma_{g,n+2}^{Y_1,\ldots, Y_n, X^{**}, X^*}) \arrow{r} 
\arrow[d, "\circ (\id_{X^* \ot X^{**}} \ot f_n \ot \dots \ot f_1)"']  & \TFT(\Sigma_{g+1,n}^{Y_1,\ldots, Y_n}) 
\arrow[d, "\circ (f_n \ot \dots \ot f_1)"]\\
\TFT(\Sigma_{g,n+2}^{X_1,\ldots, X_n, X^{**}, X^*})
\arrow[r] & \TFT(\Sigma_{g+1,n}^{X_1,\ldots, X_n})
\end{tikzcd}
\end{center}
commutes.

The special case of the gluing property under consideration here then states that the diagram
\begin{center}
\begin{tikzcd}
\TFT(\Sigma_{g,n+2}^{X_1,\ldots, X_n, X^{**}, X^*}) \arrow{r} \arrow[d, "\TFT(\varphi)"]  & \TFT(\Sigma_{g+1,n}^{X_1,\ldots, X_n}) \arrow[d, "\TFT(\psi)"]\\
\TFT(\Sigma_{g,n+2}^{X_{\tau(1)},\ldots, X_{\tau(n)}, X^{**}, X^*})
\arrow[r] & \TFT(\Sigma_{g+1,n}^{X_{\tau(1)},\ldots, X_{\tau(n)}})
\end{tikzcd}
\end{center}
commutes for suitably chosen representatives~$\TFT(\varphi)$ and~$\TFT(\psi)$ within their respective projective classes. As above, $\TFT(\varphi)$ and~$\TFT(\psi)$ are viewed here as natural transformations of the corresponding functors, so that the nonzero scalar used in passing to a different representative does not depend on the spaces~$X_1,\ldots, X_n$.  

To illustrate this property, we consider the example $\varphi = \fd_{n+1}$, which also plays an important role later. Then the corresponding map on the glued surface is $\psi=\ft_1$. The actions of~$\varphi$ and~$\psi$ are described in the list in Paragraph~\ref{MapClRep}. By its definition in Paragraph~\ref{Coend}, we have
$\fT \circ\iota_X = \iota_X \circ (\id_{X^*} \ot \theta_X) 
= \iota_X \circ (\theta_{X}^* \ot \id_X)$, where the second equality follows from the dinaturality of~$\iota$. Therefore, the diagram
\begin{center}
\begin{tikzcd}[column sep=huge]
\Hom_\CC(X_n \ot \dots \ot X_1, X^* \ot X\ot L^{\ot g}) 
\arrow[d, "(\theta_{X}^* \ot \id_X\ot \id_{L^{\ot g}}) \circ"] 
\arrow[r, "(\iota_X\ot \id_{L^{\ot g}}) \circ"]  & 
\Hom_\CC(X_n \ot \dots \ot X_1, L^{\ot (g+1)}) \arrow[d, "(\fT\ot \id_{L^{\ot g}}) \circ"]\\
\Hom_\CC(X_n \ot \dots \ot X_1, X^* \ot X \ot L^{\ot g}) 
\arrow[r, "(\iota_X\ot \id_{L^{\ot g}}) \circ"'] & 
\Hom_\CC(X_n \ot \dots \ot X_1, L^{\ot (g+1)})
\end{tikzcd}
\end{center}
commutes.

Combining this with the naturality of the adjunction, we see that the gluing property holds in this case. 

We note that the choice $\varphi = \fd_{n+2}$ leads to the same map on the glued surface, namely~$\psi=\ft_1$. A very similar reasoning shows that the gluing property also holds in this case.

\subsection{The case of the torus} \label{ActTorus}
In order to understand how the results of this article generalize the results of our previous one (cf.~\cite{LMSS1}), it will be important to consider the projective representation reviewed in Paragraph~\ref{MapClRep} in the case of the torus, where~$g=1$, and the case where $\CC$ is the category of representations of a Hopf algebra $A$ with the properties described in Paragraph~\ref{Fact}. Important ingredients of this projective representation are the endomorphisms~$\fS$ and~$\fT$ of~$L=A^*$, whose explicit form we have determined in Paragraph~\ref{CoendHopf}. In our previous article, we have used the same symbols for endomorphisms of~$A$, which we will now denote by $\hS$ and~$\hT$ and which are given by
\[\hS(a) = \rho(a Q^1) S(Q^2) \qquad \text{and} \qquad \hT(a) = va\]
(cf.~\cite[Sec.~3, p.~410]{LMSS1}). In order to understand how these endomorphisms are related, we use a variant of the Radford map 
\[\iota \colon A \to A^*,~a \mapsto \iota(a)\]
defined by $\iota(a)(a') = \rho(aa')$, which was also introduced on the page just cited. Here~$\rho$ is the right integral already considered in Paragraph~\ref{CoendHopf}. As recalled in~\cite[loc.~cit.]{LMSS1}, $\rho$ is contained in the space 
\begin{align*}
\bar{C}(A) \deq \{\varphi \in A^* \mid \varphi(aa') = 
\varphi(S^{2}(a') a) \; \text{for all} \; a,a' \in A\},
\end{align*}
which can be viewed as the set of invariants $(A^*)^A$ for the left coadjoint action discussed in Paragraph~\ref{CoendHopf}. The variant of the Radford map that we will need is the map
$\bar{\iota} \deq \iota \circ S^2$, which then satisfies
\[\bar{\iota}(a)(a') = \iota(S^2(a))(a') = \rho(S^2(a)a') = \rho(a'a).\]
With the help of~$\bar{\iota}$, we see that $\hS$ and~$\hT$ are conjugate to $\fS$ and~$\fT$:
\begin{Lemma} \label{LemEquiv} 
The diagrams
\begin{center}
\begin{tikzcd}
A^* \arrow[r, "\fS"] & A^* \\
A \arrow[u, "\bar{\iota}"] \arrow[r, "\hS"'] & A \arrow[u, "\bar{\iota}"']
\end{tikzcd}
\qquad \text{and} \qquad
\begin{tikzcd}
A^* \arrow[r, "\fT"] & A^* \\
A \arrow[u, "\bar{\iota}"] \arrow[r, "\hT"'] & A \arrow[u, "\bar{\iota}"']
\end{tikzcd}
\end{center}
commute. Moreover, $\bar{\iota}$ is an isomorphism of $A$-modules, where~$A^*$ carries the left coadjoint action and $A$ carries the left adjoint action of the coopposite Hopf algebra, given by $a.a' \deq a_{(2)} a' S^{-1}(a_{(1)})$.
\end{Lemma}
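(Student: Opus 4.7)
The plan is to verify the three assertions---commutativity of the $\fT$-diagram, commutativity of the $\fS$-diagram, and the $A$-module isomorphism claim---by direct computation, in each case evaluating on an arbitrary element $a\in A$ and pairing with a test element $a'\in A$. The only ingredients needed are the formulas for $\fT$ and $\fS$ from Proposition~\ref{CoendHopfProp} and the closing formula of Paragraph~\ref{CoendHopf}, the definition $\bar{\iota}(a)(a')=\rho(a'a)$, the cyclicity property $\rho(xy)=\rho(S^2(y)x)$ expressing $\rho\in\bar{C}(A)$, and the identity $(S^2\ot S^2)(Q)=Q$, which follows from the standard relation $(S\ot S)(R)=R$ together with the fact that $S^2\ot S^2$ is an algebra automorphism of $A\ot A$.

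The $\fT$-diagram is immediate: both $\bar{\iota}(\hT(a))(a')=\rho(a'va)$ and $(\fT\circ\bar{\iota})(a)(a')=\bar{\iota}(a)(a'v)=\rho(a'va)$ drop out by unfolding the definitions. The $\fS$-diagram is the main technical point. Unfolding yields
\[
\bar{\iota}(\hS(a))(a')=\rho(aQ_1)\,\rho(a'S(Q_2)),\qquad
\fS(\bar{\iota}(a))(a')=\rho(Q_1 a)\,\rho(S(Q_2)a'),
\]
and the task is to equate these two products. Applying the cyclicity property in each factor of the left-hand side replaces $\rho(aQ_1)$ by $\rho(S^2(Q_1)a)$ and $\rho(a'S(Q_2))$ by $\rho(S(S^2(Q_2))a')$. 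Now the identity $(S^2\ot S^2)(Q)=Q$ allows one to substitute $S^2(Q_1)\ot S^2(Q_2)$ back to $Q_1\ot Q_2$ in the joint sum, producing the right-hand side. The main obstacle here is conceptual rather than computational: one must observe that cyclicity cannot be applied independently in each tensor factor of $Q$, but that the two applications are rendered coherent by the invariance of the monodromy under $(S^2\ot S^2)$.

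For the final claim, bijectivity of $\bar{\iota}=\iota\circ S^2$ is inherited from Radford's theorem, which furnishes bijectivity of $\iota$ for any finite-dimensional Hopf algebra with nonzero right integral, together with the bijectivity of $S$. To verify $A$-linearity, I would evaluate on an arbitrary $c\in A$:
\[
(a.\bar{\iota}(b))(c)=\bar{\iota}(b)(S(a_{(1)})\,c\,a_{(2)})=\rho(S(a_{(1)})\,c\,a_{(2)}\,b),
\]
while $\bar{\iota}(a_{(2)}bS^{-1}(a_{(1)}))(c)=\rho(c\,a_{(2)}\,b\,S^{-1}(a_{(1)}))$. A single application of the cyclicity property with $x=ca_{(2)}b$ and $y=S^{-1}(a_{(1)})$ rewrites the latter as $\rho(S^2(S^{-1}(a_{(1)}))\,c\,a_{(2)}\,b)=\rho(S(a_{(1)})\,c\,a_{(2)}\,b)$, completing the verification.
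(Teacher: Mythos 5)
Your proof is correct and follows essentially the same computation as the paper: unfold both sides against a test element, apply the cyclicity property $\rho(xy)=\rho(S^2(y)x)$, and in the $\fS$-case use the invariance of $Q$ under $S^2\ot S^2$ to reconcile the two applications of cyclicity. You make this last ingredient explicit where the paper leaves it implicit, which is a useful clarification but not a different argument.
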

\begin{proof}
To show the commutativity of the first diagram, we use that $\rho \in \bar{C}(A)$ to compute
\begin{align*}
\bar{\iota}(\hS(a))(a') &= \rho(a Q^1) \bar{\iota}(S(Q^2))(a') = \rho(a Q^1) \rho(a' S(Q^2)) \\
&= \rho(S^2(Q^1)a) \rho(S^3(Q^2)a') = \rho(Q^1 a) \rho(S(Q^2)a') \\
&= \bar{\iota}(a)(Q^1) \rho(S(Q^2)a') = \fS(\bar{\iota}(a))(a').
\end{align*}
The proof of the commutativity of the second diagram is even simpler: We have
\begin{align*}
\bar{\iota}(\hT(a))(a') &= \bar{\iota}(va)(a') = \rho(a'va) 
= \bar{\iota}(a)(a'v) = \fT(\bar{\iota}(a))(a') 
\end{align*}
by Proposition~\ref{CoendHopfProp}. Because $\rho$ is a Frobenius homomorphism, $\iota$ and~$\bar{\iota}$ are bijective. To see that~$\bar{\iota}$ is linear with respect to the specified $A$-actions, we compute
\begin{align*}
\bar{\iota}(a.a')(a'') &= \rho(a''(a.a')) = 
\rho(a'' a_{(2)} a' S^{-1}(a_{(1)})) = \rho(S(a_{(1)}) a'' a_{(2)} a')\\
&= \bar{\iota}(a')(S(a_{(1)}) a'' a_{(2)}) = (a.(\bar{\iota}(a')))(a'')
\end{align*}
and get the assertion.
\end{proof}

By~\cite[Prop.~4.3, p.~37]{SZ}, we have 
$\hS \circ \hT \circ \hS = \rho(v) \; \hT^{-1} \circ \hS \circ \hT^{-1}$. The preceding lemma therefore implies that we also have
$\fS \circ \fT \circ \fS = \rho(v) \; \fT^{-1} \circ \fS \circ \fT^{-1}$. It then follows from our discussion of the braid group~$B_3$ in Paragraph~\ref{Torus} that the assignment
\[s \mapsto \fS \qquad \text{and} \qquad t \mapsto \fT \]
yields a projective representation of~$B_3$ in~$\Aut_A(L)$. If~$X$ is an $A$-module, postcomposition with~$\fS$ and~$\fT$ therefore leads to a projective representation of~$B_3$ on the space~$\Hom_A(X, L)$. On the other hand, we have seen in Paragraph~\ref{Torus} that the assignment $r \mapsto \fr_1$ and $t \mapsto \ft_1$ yields an isomorphism between the braid group~$B_3$ and the mapping class group~$\Gamma_{1,1}$. By construction, this isomorphism maps~$s$ to~$\fs_1$. Therefore, if we transport the projective action of~$B_3$ to~$\Gamma_{1,1}$ along this isomorphism, we obtain exactly the projective representation considered in Paragraph~\ref{MapClRep} in the case~$g=1$ and~$n=1$. 

It is instructive to see why the 2-chain relation $(\fr_1 \ft_1)^6 = \fd_1$ in the mapping class group~$\Gamma_{1,1}$, which we discussed in Paragraph~\ref{Torus}, is satisfied in our situation. We have already used in Paragraph~\ref{Torus} that $(\fr_1 \ft_1)^6 = \fs_1^{-4}$.
Now suppose that \mbox{$f \in \Hom_A(X,L)$} is an $A$-linear map. For $x\in X$, we choose 
$a \in A$ so that $f(x) = \bar{\iota}(a)$. For example from~\cite[Prop.~3.2, p.~411]{LMSS1}
combined with~\cite[Lem.~3.3, p.~412]{LMSS1}, we know that
\[\hS^4(a) = ((\rho \ot \rho)(Q))^2 S(v^{-1}_{(1)}) a v^{-1}_{(2)}. \]
From the lemma above and the fact that~$\rho \in \bar{C}(A)$, we then get
\begin{align*}
((\fs_1^{4}.f)(x))(a') &= \fS^4(f(x))(a') = \fS^4(\bar{\iota}(a))(a') 
= \bar{\iota}(\hS^4(a))(a') = \rho(a' \hS^4(a)) \\
&= ((\rho \ot \rho)(Q))^2 \; \rho(a'S(v^{-1}_{(1)}) a v^{-1}_{(2)}) \\
&= ((\rho \ot \rho)(Q))^2 \; \rho(S^2(v^{-1}_{(2)}) a' S(v^{-1}_{(1)})a) \\
&= ((\rho \ot \rho)(Q))^2 \; f(x)(S^2(v^{-1}_{(2)}) a' S(v^{-1}_{(1)})) \\
&= ((\rho \ot \rho)(Q))^2 \; (S(v^{-1}).f(x))(a') 
= ((\rho \ot \rho)(Q))^2 \; (v^{-1}.f(x))(a') \\
&= ((\rho \ot \rho)(Q))^2 \; f(v^{-1}.x)(a'). 
\end{align*}
On the other hand, we have
\[((\fd_1^{-1}.f)(x))(a') = ((f \circ \theta_X^{-1})(x))(a') = f(v^{-1}.x)(a'),\]
so that the actions of $\fs_1^{4}$ and~$\fd_1^{-1}$ agree up to a scalar, as required.

In the case $g=1$, but $n=0$, the definition in Paragraph~\ref{BlockSpc} is to be understood in such a way that the action is on the block space~$\Hom_A(\K, L)$, because the base field is the unit object of the category, as explained in Paragraph~\ref{Fact}. For any left \mbox{$A$-module~$Y$}, the map $\Hom_A(\K, Y) \to Y^A,~f \mapsto f(1_\K)$ yields a bijection with the space of invariants 
\[Y^A = \{y \in Y \mid a.y = \varepsilon(a) y \; \text{for all} \; a \in A\}.\]
As we said above, in the case $Y=L$ we have $L^A = \bar{C}(A)$, while in the case $Y=A$, endowed with the action described in Lemma~\ref{LemEquiv}, we have $Y^A = Z(A)$, the center of~$A$. In view of its $A$-linearity, $\bar{\iota}$ restricts to an isomorphism between~$Z(A)$ and~$\bar{C}(A)$.

In the case $X=\K$, we have that $\theta_X$ is the identity map, because~$\varepsilon(v)=1$. This means that~$\fd_1$ acts trivially on~$\Hom_A(\K, L)$. By the 2-chain relation that we have just checked explicitly, this implies that~$\fs_1^{4}$ acts trivially. This in turn means, in view of the discussion in Paragraph~\ref{Torus}, that the projective action of the braid group~$B_3$ descends to a projective action of the modular group~$\SL(2,\Z)$ on~$\bar{C}(A)$, as it is required for the construction in Paragraph~\ref{MapClRep}, because
$\Gamma_1 \cong \SL(2,\Z)$. Via~$\bar{\iota}$, this projective action on~$\bar{C}(A)$ is isomorphic to an action of~$\SL(2,\Z)$ on the center~$Z(A)$. This is the action that was considered in~\cite[Cor.~3.4, p.~412]{LMSS1}.

Let us mention that using~$\bar{\iota}$ is not the only way to relate the projective representations on~$\bar{C}(A)$ and~$Z(A)$. Other ways are discussed in~\cite[Par.~9.1, p.~87ff]{SZ}.

\section{Derived functors} \label{Sec:DerFunct}
\subsection{Projective resolutions} \label{Ext}
There are various approaches to the definition of the Ext-functors in abelian categories. We assume here that our category satisfies the assumptions listed in Paragraph~\ref{Fin}; one of these assumptions was that it has enough projectives. In this case, we can use the approach via projective resolutions described in~\cite[Chap.~VII, \S~7, p.~182ff]{Mi} and denoted there by~$\overline{\Ext}$, but here just denoted by~$\Ext$. In this approach, the group~$\Ext^m(X,Y)$ for two objects~$X$ and~$Y$ of~$\CC$ is defined by choosing a projective resolution 
\[X \xlongleftarrow{\xi} P_0 \xlongleftarrow{d_1} P_1 \xlongleftarrow{d_2} P_2
\xlongleftarrow{d_3} \cdots\]
of~$X$, which exists by the above hypothesis. The group~$\Ext^m(X,Y)$ is then defined as the $m$-cohomology group of the cochain complex 
\[\Hom_\CC(P_0,Y) \xlongrightarrow{} \Hom_\CC(P_1,Y) \xlongrightarrow{} \Hom_\CC(P_2,Y)
\xlongrightarrow{} \cdots\]
of abelian groups. Although the notation does not reflect this, the definition depends on the chosen resolution; different resolutions lead to $\Ext$-groups that are canonically isomorphic, but not equal (cf.~\cite[Chap.~XII, \S~9, p.~390]{ML1}; \cite[Sec.~1, p.~402]{LMSS1}). Note that 
$\Ext^0(X,Y) \cong \Hom_\CC(X,Y)$, because the Hom-functor is left exact.

The following lemma about the ribbon structure~$\theta$ will be important in the sequel:
\begin{Lemma} \label{NatHomotop}
Suppose that 
\[\1 \xlongleftarrow{\xi} P_0 \xlongleftarrow{d_1} P_1 \xlongleftarrow{d_2} P_2
\xlongleftarrow{d_3} \cdots\]
is a projective resolution of the unit object. Then there exists, for every object~$X \in \CC$ and every $m \ge 0$, a morphism $h_m(X)\colon P_m \ot X \to P_{m+1} \ot X$ such that
\[\theta_{P_m \ot X} - \id_{P_m} \ot \theta_{X} 
= (d_{m+1} \ot \id_X) \circ h_m(X) + h_{m-1}(X) \circ (d_m \ot \id_X)\]
for all $m \ge 1$, and $\theta_{P_0 \ot X} - \id_{P_0} \ot \theta_{X} = (d_1 \ot \id_X) \circ h_0(X)$. These morphisms are natural in~$X$ in the sense that, for a morphism $f\colon X \to Y$, the diagram
\begin{center}
\begin{tikzcd}[column sep = huge]
P_m \ot X  \arrow[d, "h_m(X)"'] \arrow[r, "\id_{P_m} \ot f"] & P_m \ot Y \arrow[d, "h_m(Y)"]  \\
P_{m+1} \ot X  \arrow[r, "\id_{P_{m+1}} \ot f"'] & P_{m+1} \ot Y 
\end{tikzcd}
\end{center}
commutes.
\end{Lemma}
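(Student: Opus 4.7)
The plan is to first recognize $P_\bullet \ot X$ as a projective resolution of $X$, then exhibit $\theta_{P_\bullet \ot X}$ and $\id_{P_\bullet} \ot \theta_X$ as two chain endomorphisms that both lift $\theta_X$ at the augmentation, and finally build the $h_m(X)$ by a functorial version of the standard comparison theorem for projective resolutions.

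For the first step, Paragraph~\ref{Fin} shows that the functor $- \ot X$ is exact and preserves projective objects, so $P_m \ot X$ is projective for every $m \ge 0$ and $P_\bullet \ot X$ remains exact; hence it is a projective resolution of $X = \1 \ot X$. That both families $\theta_{P_\bullet \ot X}$ and $\id_{P_\bullet} \ot \theta_X$ commute with every $d_{m+1} \ot \id_X$, and reduce to $\theta_X$ upon composition with the augmentation $\xi \ot \id_X$, will follow from the naturality of $\theta$ and the bifunctoriality of $\ot$, once one observes that $\theta_\1 = \id_\1$ (an immediate consequence of $\End(\1) = \K$, $c_{\1,\1} = \id_\1$, and the ribbon axiom applied to $\1 \ot \1 = \1$).

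I then construct $h_m(X)$ inductively, setting $h_{-1}(X) \deq 0$ and, at stage $m$, considering
\[\alpha_m(X) \deq \theta_{P_m \ot X} - \id_{P_m} \ot \theta_X - h_{m-1}(X) \circ (d_m \ot \id_X).\]
A short computation, using $d_m \circ d_{m+1} = 0$ together with the chain-map properties of $\theta_{P_\bullet \ot X}$ and $\id_{P_\bullet} \ot \theta_X$ as well as the inductive identity, yields $(d_m \ot \id_X) \circ \alpha_m(X) = 0$; by exactness of $P_\bullet \ot X$, the morphism $\alpha_m(X)$ factors through $\Ker(d_m \ot \id_X) = \Img(d_{m+1} \ot \id_X)$, and projectivity of $P_m \ot X$ produces a lift through the epimorphism $d_{m+1} \ot \id_X$ --- this lift is $h_m(X)$, and it satisfies the identity asserted in the lemma.

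The harder part will be making the $h_m$ natural in $X$. For this I plan to carry out the same induction in the category of $\K$-linear functors $\CC \to \CC$: once $h_{m-1}$ has been chosen as a natural transformation $P_{m-1} \ot (-) \to P_m \ot (-)$, the expression $\alpha_m$ is itself a natural transformation $P_m \ot (-) \to P_m \ot (-)$ factoring through the natural subfunctor $\Img(d_{m+1} \ot \id_{(-)})$, and I will lift it functorially through the pointwise-epic natural transformation $d_{m+1} \ot \id_{(-)}\colon P_{m+1} \ot (-) \to \Img(d_{m+1} \ot \id_{(-)})$ by exploiting that $P_m \ot (-)$ is projective as a functor. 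Concretely, the duality adjunctions of Paragraph~\ref{Fin} translate natural transformations out of $P_m \ot (-)$ into morphisms out of the projective object $P_m$ in $\CC$, so the required lift exists because $P_m$ is projective. The principal obstacle is exactly this functoriality step; the existence of $h_m(X)$ for fixed $X$ and the verification that $\alpha_m$ factors as required are routine consequences of the exactness of $-\ot X$, the chain-map conditions, and the ribbon axioms.
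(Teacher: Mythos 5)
Your plan is right up to the crucial point of the argument, and the first two steps are fine: $P_\bullet\ot X$ is a projective resolution of~$X$ by the exactness and projectivity-preservation of $-\ot X$, and both $\theta_{P_\bullet\ot X}$ and $\id_{P_\bullet}\ot\theta_X$ are chain lifts of $\theta_\1=\id_\1$, so the classical comparison theorem gives, for each fixed~$X$, a homotopy $h_m(X)$. That part is exactly what the paper also leans on.

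The gap is in your justification for why $P_m\ot(-)$ is projective as a functor — the step that would make the homotopy natural. Your argument is: by the adjunction $\Hom(P_m\ot X, Z)\cong\Hom(P_m, Z\ot X^*)$, a natural transformation out of $P_m\ot(-)$ ``becomes'' a family of morphisms out of the projective object $P_m$, and hence can be lifted. But this only produces a lift of each \emph{component} $\alpha_X$ separately: for each $X$ you choose a preimage under the epimorphism $d_{m+1}\ot\id_X$ using projectivity of $P_m$, but nothing in the adjunction forces these independently chosen preimages to assemble into a natural transformation. So you have merely reproved the pointwise existence of $h_m(X)$, not the naturality. The statement that $P_m\ot(-)$ is projective as an object in a suitable functor category is genuinely stronger than the projectivity of each $P_m\ot X$; it says precisely that \emph{one coherent family} of lifts exists, which is what you actually need. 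The paper does not attempt to derive this from the adjunctions; it cites a result of Shimizu (\cite[Cor.~2.6, p.~466]{Sh2}) asserting that for a finite tensor category over a \emph{perfect} base field, $P\ot(-)$ is projective in the category $\Rex(\CC,\CC)$ of $\K$-linear right exact endofunctors, and then invokes the comparison theorem inside $\Rex(\CC,\CC)$. Two further points you should incorporate: the relevant functor category must be $\Rex(\CC,\CC)$ (so that it is abelian with enough projectives, being equivalent via Eilenberg--Watts to a category of bimodules), not the category of all $\K$-linear functors; and the perfectness of $\K$ is an essential hypothesis for this projectivity statement, which your sketch never uses — this is why the paper imposes it in the introduction.
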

\begin{proof}
We denote by~$\Rex(\CC, \CC)$ the category of $\K$-linear right exact functors from~$\CC$ to~$\CC$, whose morphisms are natural transformations. By~\cite[Cor.~2.6, p.~466]{Sh2}, the functor $X \mapsto P_m \ot X$ is a projective object in the $\K$-linear category~$\Rex(\CC, \CC)$, and therefore these functors form a projective resolution of the functor~$X \mapsto \1 \ot X \cong X$ in the category~$\Rex(\CC, \CC)$. The result now follows from the comparison theorem (cf.~\cite[Thm.~6.16, p.~340]{Ro2}) applied to the category~$\Rex(\CC, \CC)$.
\end{proof}

\vspace{2mm}

If~$\CC$ is the category of finite-dimensional modules over a factorizable Hopf algebra discussed in Paragraph~\ref{Fact}, the preceding proof can be given a more explicit form. By the Eilenberg-Watts theorem already mentioned in the proof of Proposition~\ref{RightAdj}, right exact functors can in this case be represented by tensoring with bimodules. In particular, the functor~$X \mapsto P_m \ot X$ is represented as~$X \mapsto (P_m \ot A) \ot_A X$, where the space~$P_m \ot A$ carries the $A$-bimodule structure
\vspace{1mm}
\[a.(p \ot a').a'' = a_{(1)}.p \ot a_{(2)}a'a'' \vspace{1mm} \]
for $p \in P_m$ and~$a, a', a'' \in A$. We then have two liftings of~$\theta_A$, which appear in the diagram
\begin{center}
\begin{tikzcd}[column sep = large] {}
A \arrow{d}{\theta_A} & P_0 \ot A \arrow{l}[swap]{\xi \ot \id_A} 
\arrow[d, xshift=0.7ex, "\id_{P_0} \ot \theta_{A}"] 
\arrow[d, xshift=-0.6ex, "\theta_{P_0 \ot A}"'] &
P_1 \ot A \arrow{l}[swap]{d_1 \ot \id_A} 
\arrow[d, xshift=0.7ex, "\id_{P_1} \ot \theta_{A}"] 
\arrow[d, xshift=-0.6ex, "\theta_{P_1 \ot A}"'] &
\cdots \arrow{l}[swap]{d_2 \ot \id_A}\\
A & P_0 \ot A \arrow{l}[swap]{\xi \ot \id_A} & P_1 \ot A \arrow{l}[swap]{d_1 \ot \id_A} & \cdots
\arrow{l}[swap]{d_2 \ot \id_A}
\end{tikzcd}
\end{center}

\vspace{2mm}

and have the explicit form
\vspace{1mm}
\[\theta_{P_m \ot A}(p \ot a) = v_{(1)}.p \ot v_{(2)}a \qquad \text{and}\qquad
(\id_{P_m} \ot \theta_{A})(p \ot a) = p \ot va \vspace{1mm} \]
in terms of the ribbon element~$v$.

\vspace{3mm}

Now we can apply the more standard comparison theorem for bimodules, considered as 
\mbox{$A \ot A^{\op}$}-modules, to see that these two liftings are chain-homotopic via a chain homotopy $h'_m\colon P_m \ot A \to P_{m+1} \ot A$.
The mappings $h_m(X)$ are then induced from the mappings $h'_m \ot_A \id_X$ via the isomorphism
$(P_m \ot A) \ot_A X \cong P_m \ot X$.

\subsection{Derived block spaces} \label{MappClassExt}
Our goal now is to extend the projective action of the pure mapping class group~$\Pu\Gamma_{g,n}$ on the block spaces
\[\TFT(\Sigma_{g,n}^{X_1,\ldots, X_n}) \deq
\Hom_{\CC}(X_n \ot \dots \ot X_1, L^{\ot g})\]
to the derived block spaces
\[\TFT^m(\Sigma_{g,n}^{X_1,\ldots, X_n}) \deq
\Ext^m(X_n \ot \dots \ot X_1, L^{\ot g}).\]
To do this, we consider the surface~$\Sigma_{g,n+1}$ with one additional boundary component, and choose a projective resolution
\[\1 \xlongleftarrow{\xi} P_0 \xlongleftarrow{d_1} P_1 \xlongleftarrow{d_2} P_2
\xlongleftarrow{d_3} \cdots\]
of the unit object. As we mentioned in Paragraph~\ref{Fin}, tensoring is exact and preserves projectives, so that 
\[\1 \ot X_n \ot \dots \ot X_1 \xlongleftarrow{} P_0\ot X_n \ot \dots \ot X_1 \xlongleftarrow{} P_1\ot X_n \ot \dots \ot X_1 \xlongleftarrow{} \cdots\]
is a projective resolution of~$X_n \ot \cdots \ot X_1 \cong \1\ot X_n \ot \cdots \ot X_1$. 
If we use the abbreviation~$\Gamma_{g,n+1}(n+1)$ for the 
mapping class group~$\Gamma_{g,n+1}(\Img(\rho_{n+1}))$, an element~$[\psi]$ of this group fixes the $(n+1)$st boundary component. Then the permutation $\tau \deq p([\psi])^{-1} \in S_{n+1}$ introduced in Paragraph~\ref{MapClGr} fixes~$n+1$, and therefore each representative~$\TFT(\psi)$ of the associated projective class~$[\TFT(\psi)]$ yields by naturality a cochain homomorphism between the cochain complexes 
\[\TFT(\Sigma_{g,n+1}^{X_1, \ldots, X_n, P_m}) = 
\Hom_{\CC}(P_m \ot X_n \ot \cdots \ot X_1, L^{\ot g})\] 
and
\[\TFT(\Sigma_{g,n+1}^{X_{\tau(1)}, \ldots, X_{\tau(n)}, P_m}) = 
\Hom_{\CC}(P_m \ot X_{\tau(n)} \ot \cdots \ot X_{\tau(1)}, L^{\ot g})\]
and so induces a homomorphism
\[\TFT^m(\psi) \colon \Ext^m(X_n \ot \dots \ot X_1, L^{\ot g}) \to 
\Ext^m(X_{\tau(n)} \ot \cdots \ot X_{\tau(1)}, L^{\ot g})\]
between the derived block spaces. Choosing a different representative of the projective class clearly rescales~$\TFT^m(\psi)$ by a nonzero scalar, so that the projective class~$[\TFT^m(\psi)]$ is well-defined. 

However, we have associated this homomorphism between derived block spaces with a mapping class
$[\psi] \in \Gamma_{g,n+1}(n+1)$, whereas the homomorphisms between the original block spaces were associated with a mapping class $[\psi] \in \Gamma_{g,n}$. As we will show now, we can also associate homomorphisms between the derived block spaces with a mapping class~$[\psi] \in \Gamma_{g,n}$, namely by choosing a preimage in~$\Gamma_{g,n+1}(n+1)$ under the homomorphism~$D_{n+1}$ defined at the end of Paragraph~\ref{Cap}. For this, we obviously need to show that the arising homomorphisms are independent of the chosen preimage. We begin with a few auxiliary results: 
\begin{Lemma} \label{LemDehnTw1}
For the Dehn twist~$\fd_{n+1} \in \Gamma_{g,n+1}(n+1)$, we have~$[\TFT^m(\fd_{n+1})] = [\id]$.
\end{Lemma}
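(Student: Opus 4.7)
The plan is to use Claim~(1) of Paragraph~\ref{MapClRep}, which identifies a representative of the projective class $[\TFT(\fd_{n+1})]$ as precomposition with $\theta_{P_m} \ot \id_{X}$ on $\Hom_\CC(P_m \ot X, L^{\ot g})$, where for brevity I write $X \deq X_n \ot \cdots \ot X_1$. As $m$ varies, these precomposition maps form a cochain endomorphism of the complex $\Hom_\CC(P_\bullet \ot X, L^{\ot g})$ computing $\Ext^*(X, L^{\ot g})$, so the task reduces to showing that this cochain endomorphism is cochain-homotopic to the identity; equivalently, that $\theta_{P_\bullet} \ot \id_X$ is chain-homotopic to $\id_{P_\bullet \ot X}$ as an endomorphism of the projective resolution $P_\bullet \ot X \to X$.

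The key observation is that exactly this chain homotopy is provided by Lemma~\ref{NatHomotop} applied to the object $\1$. Indeed, the ribbon axiom $\theta_{Y\ot Z} = (\theta_Y \ot \theta_Z)\circ c_{Z,Y}\circ c_{Y,Z}$ taken at $Y=Z=\1$ in the strict setting forces $\theta_\1 = \theta_\1^2$, whence $\theta_\1 = \id_\1$ since $\theta_\1$ is invertible. The lemma then yields morphisms $h_m(\1)\colon P_m \to P_{m+1}$ with
\[\theta_{P_m} - \id_{P_m} = d_{m+1}\circ h_m(\1) + h_{m-1}(\1)\circ d_m\]
for all $m\ge 1$, and $\theta_{P_0} - \id_{P_0} = d_1\circ h_0(\1)$. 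Because the functor $-\ot X$ is exact (cf.~Paragraph~\ref{Fin}), the family $h_m(\1)\ot \id_X$ is the sought chain homotopy.

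Applying $\Hom_\CC(-, L^{\ot g})$ turns this into a cochain homotopy between the cochain endomorphism induced by $\fd_{n+1}$ and the identity cochain map, so they induce the same map on cohomology. Consequently $\TFT^m(\fd_{n+1}) = \id$ on $\Ext^m(X, L^{\ot g})$ for this choice of representative, and hence $[\TFT^m(\fd_{n+1})] = [\id]$. No step is genuinely difficult; the only point that requires attention is the correct specialization of Lemma~\ref{NatHomotop}, namely evaluating it at $X=\1$ rather than at the tensor product of the labels, so that the right-hand side collapses to $\theta_{P_m} - \id_{P_m}$ and the chain-homotopy conclusion can be tensored with $\id_X$ afterwards.
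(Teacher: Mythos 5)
Your proof is correct and lands on the same chain-homotopy as the paper, but it takes a slightly more roundabout route. The paper observes directly that both $(\theta_{P_m})$ and $(\id_{P_m})$ are lifts of the morphism $\theta_\1 = \id_\1$ along the projective resolution of~$\1$, and then invokes the ordinary comparison theorem in $\CC$ to produce a chain homotopy $h_m\colon P_m \to P_{m+1}$; tensoring with $X_n \ot \dots \ot X_1$ then gives the desired cochain homotopy. You instead invoke Lemma~\ref{NatHomotop} and specialize it at the unit object; after the specialization $\theta_{P_m \ot \1} - \id_{P_m}\ot\theta_\1$ collapses to $\theta_{P_m} - \id_{P_m}$, you get the same equations and then tensor with $\id_X$. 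Both are valid, but note that Lemma~\ref{NatHomotop} is itself proved via the comparison theorem in the functor category $\Rex(\CC,\CC)$, relying on the nontrivial fact (and the perfectness hypothesis on $\K$) that $P_m \ot -$ is projective there; for the present lemma none of that is needed, since the morphisms $\theta_{P_m}$ live inside $\CC$ and the elementary comparison theorem already applies. Your argument is thus formally correct but uses a heavier tool than necessary; it also does not require the naturality of the $h_m(\1)$ furnished by Lemma~\ref{NatHomotop}, so that feature of the lemma goes unused. One further small remark: to push the chain homotopy from $P_\bullet$ to $P_\bullet \ot X$ you only need that $- \ot X$ is additive, not exact; exactness is what guarantees $P_\bullet \ot X \to X$ is still a resolution, which is of course also needed.
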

\begin{proof}
According to Paragraph~\ref{MapClRep}, the Dehn twist~$\fd_{n+1}$ acts by precomposition with the morphism $\theta_{X_{n+1}} \ot \id_{X_n \ot \dots \ot X_{1}}$. By~\cite[Lem.~XIV.3.3, p.~350]{Ka}, we have $\theta_\1 = \id_\1$. 
As we have discussed already at the end of Paragraph~\ref{Ext}, the diagram
\begin{center}
\begin{tikzcd} {}
\1 \arrow{d}{\theta_\1} & P_0 \arrow{l}[swap]{\xi}\arrow{d}{\theta_{P_0}} &
P_1 \arrow{l}[swap]{d_1} \arrow{d}{\theta_{P_1}} &
\cdots\arrow{l}[swap]{d_2}\\
\1 & P_0\arrow{l}[swap]{\xi} & P_1 \arrow{l}[swap]{d_1} & \cdots
\arrow{l}[swap]{d_2}
\end{tikzcd}
\end{center}
commutes by the naturality of the twist. On the other hand, it is obvious that the diagram
\begin{center}
\begin{tikzcd} {}
\1 \arrow{d}{\id_\1} & P_0 \arrow{l}[swap]{\xi}\arrow{d}{\id_{P_0}} &
P_1 \arrow{l}[swap]{d_1} \arrow{d}{\id_{P_1}} &
\cdots\arrow{l}[swap]{d_2}\\
\1 & P_0\arrow{l}[swap]{\xi} & P_1 \arrow{l}[swap]{d_1} & \cdots
\arrow{l}[swap]{d_2}
\end{tikzcd}
\end{center}
commutes. Therefore, both the family~$(\theta_{P_m})$ and the family~$(\id_{P_m})$ lift the morphism $\theta_{\1} = \id_{\1}$ to the projective resolution. By the comparison theorem, the two lifts are chain-homotopic. This chain homotopy induces a cochain homotopy on the cochain complex 
$\Hom_\CC(P_m\ot X_n \ot \cdots \ot X_1, L^{\ot g})$, which yields that the two
maps induce the same map in cohomology, namely the identity. 
\end{proof}

We will need another lemma of a similar nature:
\begin{Lemma} \label{LemDehnTw2} 
For the two Dehn twists~$\fd_n$ and~$\fd_{n,n+1}$ in~$\Gamma_{g,n+1}(n+1)$, we have $[\TFT^m(\fd_{n})] = [\TFT^m(\fd_{n,n+1})]$.
\end{Lemma}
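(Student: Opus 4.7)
The plan is to reduce the statement directly to Lemma~\ref{NatHomotop} by comparing the two chain maps that $\fd_n$ and $\fd_{n,n+1}$ induce on the projective resolution of $X_n \ot \cdots \ot X_1$, and exhibiting the required chain homotopy between them.

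First I would recall from Paragraph~\ref{MapClRep}, items~\ref{D} and~\ref{DD}, that when the $(n+1)$-st boundary component is labeled with a projective object $P_m$, the Dehn twist $\fd_n$ acts on the block space $\Hom_\CC(P_m \ot X_n \ot \cdots \ot X_1, L^{\ot g})$ by precomposition with
\[
\id_{P_m} \ot \theta_{X_n} \ot \id_{X_{n-1} \ot \cdots \ot X_1},
\]
while $\fd_{n,n+1}$ acts by precomposition with
\[
\theta_{P_m \ot X_n} \ot \id_{X_{n-1} \ot \cdots \ot X_1}.
\]
Both become chain maps on the cochain complex $\Hom_\CC(P_\bullet \ot X_n \ot \cdots \ot X_1, L^{\ot g})$ computing $\Ext^m(X_n \ot \cdots \ot X_1, L^{\ot g})$, since the projective resolution $P_\bullet \to \1$ tensored with $X_n \ot \cdots \ot X_1$ remains a projective resolution (as recalled at the beginning of Paragraph~\ref{MappClassExt}).

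Next I would apply Lemma~\ref{NatHomotop} with $X = X_n$ to obtain morphisms $h_m(X_n) \colon P_m \ot X_n \to P_{m+1} \ot X_n$ satisfying
\[
\theta_{P_m \ot X_n} - \id_{P_m} \ot \theta_{X_n}
= (d_{m+1} \ot \id_{X_n}) \circ h_m(X_n) + h_{m-1}(X_n) \circ (d_m \ot \id_{X_n}),
\]
together with the analogous relation in degree zero. Tensoring each $h_m(X_n)$ on the right with $\id_{X_{n-1} \ot \cdots \ot X_1}$ (which is exact and is a monoidal operation, so preserves composition and addition of morphisms) gives a chain homotopy, on the resolution $P_\bullet \ot X_n \ot \cdots \ot X_1$ of $X_n \ot \cdots \ot X_1$, between the two chain maps described above.

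Finally I would apply the contravariant functor $\Hom_\CC(-, L^{\ot g})$ to this chain homotopy; the result is a cochain homotopy between the two cochain maps induced by $\TFT(\fd_n)$ and $\TFT(\fd_{n,n+1})$. Consequently the two maps agree on cohomology, so $\TFT^m(\fd_n) = \TFT^m(\fd_{n,n+1})$ and in particular $[\TFT^m(\fd_n)] = [\TFT^m(\fd_{n,n+1})]$. No step looks like a real obstacle: the entire argument is packaged by Lemma~\ref{NatHomotop}, and the only thing to verify is that the two precomposition actions really are the ones dictated by items~\ref{D} and~\ref{DD} of Paragraph~\ref{MapClRep}, which is immediate from the definitions.
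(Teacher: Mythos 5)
Your proposal is correct and follows essentially the same route as the paper's proof: both apply Lemma~\ref{NatHomotop} with $X=X_n$, tensor the resulting chain homotopy on the right with $\id_{X_{n-1}\ot\cdots\ot X_1}$, and then apply $\Hom_\CC(-,L^{\ot g})$ to conclude that the two precomposition cochain maps induce the same map in cohomology.
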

\begin{proof}
From Lemma~\ref{NatHomotop}, we know that the chain maps 
$\theta_{P_m\ot X_n}$ and~$\id_{P_m}\ot \theta_{X_{n}}$ are chain-homotopic. This implies that the chain maps 
\[ \theta_{P_m\ot X_n}\ot \id_{X_{n-1} \ot \dots \ot X_{1}} \qquad \text{and} \qquad
\id_{P_m}\ot \theta_{X_n} \ot \id_{X_{n-1} \ot \cdots \ot X_{1}} \] 
are chain-homotopic. Because~$\fd_{n,n+1}$ acts by precomposition with the first one and~$\fd_{n}$ acts by precomposition with the second one, this implies the assertion.
\end{proof}

We now consider the curve~$\alpha_i$ introduced in Paragraph~\ref{Surf}, which begins and ends in the base point~$x$ for the surface~$\Gamma_{g,n}$. If we push it off the base point to the left and to the right as described in Paragraph~\ref{FundGroup}, we can cut out a small disk centered at~$x$ without intersecting these two curves, which we consider as the $(n+1)$st boundary component. In this way, we obtain two curves~$\alpha'_i$ and~$\alpha''_i$ on the surface~$\Gamma_{g,n+1}$:

\begin{center}
\fontsize{10}{10}\selectfont
\begin{tikzpicture}[scale=0.38]
\pic[scale=0.38] (lower) at (0,-pi) {handle}; 
\pic[rotate=82.5,scale=0.38] (lr) at (-7.5:pi) {handle};
\pic[rotate=147.5,scale=0.38] (tr) at (57.5:pi) {handle};
\pic[rotate=212.5,scale=0.38] (tl) at (122.5:pi) {handle};
\pic[rotate=295,scale=0.38] (ll) at (205:pi) {handle};
\fill[gray!10]  (lower-right) to[out=100,in=170] (lr-left)-- 
(lr-right) to (tr-left)
-- (tr-right) to (tl-left)
-- (tl-right) to[out=310,in=10] (ll-left) 
--(ll-right) to (lower-left) -- cycle;
\draw[dotted] (lower-right) to[out=100,in=170] (lr-left);
\draw[dotted] (tl-right) to[out=310,in=10] (ll-left);

\node[rotate=68,scale=1.5] at (-6,1.8) {\dots};
\node[rotate=45,scale=1.5] at (4.2,-4.5) {\dots};

\draw[violet,fill=lightgray] (0,0) circle (10pt);
\node[] (v1) at (0,0) {$x$}; 
\node[below] at (lower-num) {1}; 
\node[right] at (lr-num) {$i-1$}; 
\node[above right] at (tr-num) {$i$}; 
\node[above left] at (tl-num) {$i+1$}; 
\node[below left] at (ll-num) {$g$}; 

\draw[violet,rotate=-35,fill=lightgray] (-0.8,-3.1) circle (5pt) ;
\node[rotate=-35] at (-1.6,-2.7) {\dots};
\draw[violet,rotate=-35,fill=lightgray] (1,-3.1) circle (5pt) ;

\draw[rotate=147.5,black!10!red] 
(-0.1,-1) to[out=260,in=80] node[pos=0.85, right]{$\alpha_i'$} (-0.8,-4.5) to[out=260,in=150]  (tr-B);
\draw[rotate=147.5,black!10!red, dash pattern= on 2pt off 2pt] (-2.73,-4.7) to[bend right] (tr-B);
\draw[rotate=147.5,black!10!red,decoration={ markings, mark=at position 0.7 with {\arrow{stealth}}}, postaction={decorate}]
(-0.5,-1) to[out=245,in=70] (-1.37,-3.5)  to[out=250,in=30] (-2.73,-4.7);
\draw[rotate=147.5,black!10!red] (-0.1,-1) to[out=80,in=10] (-0.24,-0.7) to[out=190,in=65] (-0.5,-1) ;

\draw[rotate=147.5,black!10!red] 
(0.5,0) to[out=255,in=80] (-0.6,-4.5) to[out=260,in=150] (0.1,-5.27);
\draw[rotate=147.5,black!10!red, dash pattern= on 2pt off 2pt] (-2.58,-4.45) to[bend right] (0.1,-5.27);
\draw[rotate=147.5,black!10!red]
(-0.5,0) to[out=255,in=70] (-1.6,-3.5)  to[out=250,in=30] (-2.58,-4.45);
\draw[rotate=147.5,black!10!red,decoration={ markings, mark=at position 0.8 with {\arrow{stealth}}}, postaction={decorate}] (0.5,0) to[out=75,in=10] node[pos=0.8, below right]{$\alpha_i''$} (0,0.6)  to[out=190,in=75] (-0.5,0) ;

\draw[brown] (tr-D) to[bend right] node[pos=0.5, below]{$\sigma$} (tr-k);
\draw[brown,dash pattern= on 2pt off 2pt] (tr-D) to[bend left] (tr-k);

\end{tikzpicture}
\end{center}
We now label the new boundary component with the elements of our projective resolution; i.e., for a given~$m$, we set $X_{n+1} \deq P_m$. We want to show that the two Dehn twists~$\ft'_i \deq \fd_{\alpha'_i}$ and~$\ft''_i \deq \fd_{\alpha''_i}$ induce the same map in cohomology. As we will see below, it is sufficient to consider the case~$i=1$:
\begin{Lemma} \label{LemFirstGen} 
For the two Dehn twists~$\ft'_1 \deq \fd_{\alpha'_1}$ and~$\ft''_1 \deq \fd_{\alpha''_1}$ in~$\Gamma_{g,n+1}(n+1)$, we have $[\TFT^m(\ft'_1)] = [\TFT^m(\ft''_1)]$.
\end{Lemma}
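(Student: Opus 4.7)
The plan is to compute $\TFT(\ft'_1)$ and $\TFT(\ft''_1)$ explicitly as cochain maps on the complex
\[ V^{\bullet} \deq \Hom_{\CC}(P_\bullet \ot X_n \ot \cdots \ot X_1, L^{\ot g}), \]
and then to exhibit a natural cochain homotopy between them, supplied by Lemma~\ref{NatHomotop}, which forces the induced maps on cohomology to coincide up to the overall scalar ambiguity that is built into the definition of $[\TFT^m(\cdot)]$.

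First I would derive formulas for the action of the two Dehn twists on the block space when the $(n+1)$-st boundary is labeled by a general object $X_{n+1}$. Geometrically, the curves $\alpha'_1$ and $\alpha''_1$ differ only in how they bypass the $(n+1)$-st boundary, lying on opposite sides of it. Translating this into the net and ribbon-graph description of Paragraph~\ref{BlockSpc}, and using the general rule from Paragraph~\ref{MapClRep} that an internal edge twist acts by $\fT$ on the corresponding coend variable, together with the gluing compatibility of Paragraph~\ref{ModFunct}, I will read off that $\ft''_1$ acts by the twist on the first $L$-factor of $L^{\ot g}$ combined with a double-braiding that pulls the $X_{n+1}$-leg across, while $\ft'_1$ acts with the twist alone, or the symmetric statement with the roles of the two variants exchanged; in any event the difference is entirely localized to an interaction between the $X_{n+1}$-leg and the first handle.

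Second, specializing $X_{n+1} = P_m$, this localization means that after the adjunctions of Paragraph~\ref{Fin} the difference between the two cochain maps on $V^{\bullet}$ is precomposition with the morphism
\[ \theta_{P_m \ot Y} - \id_{P_m} \ot \theta_{Y} \]
for a suitable object $Y$ built from $X_n \ot \cdots \ot X_1$ and the tensor factors of $L^{\ot g}$. By Lemma~\ref{NatHomotop} there is a family $h_m(Y)\colon P_m \ot Y \to P_{m+1} \ot Y$, natural in $Y$, whose coboundary with respect to $d_{\bullet} \ot \id_Y$ is exactly this expression; the naturality in $Y$ is what makes the family compatible with the differentials of $V^{\bullet}$, so the induced map on $V^\bullet$ is a genuine cochain homotopy between $\TFT(\ft'_1)$ and $\TFT(\ft''_1)$. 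Passing to cohomology, the two maps agree and hence their projective classes do as well.

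The hard part will be the first step: the list of explicit formulas in Paragraph~\ref{MapClRep} covers only Dehn twists along the standard generators of the polygon model, not the pushed-off variants $\fd_{\alpha'_1}$ and $\fd_{\alpha''_1}$ in $\Gamma_{g,n+1}(n+1)$. Obtaining the correct expressions requires careful work with the fusing and duplication functors in the net calculus of~\cite{L2}, guided by the observation from Paragraph~\ref{Birm} that, after capping with a punctured disk via $C_{n+1}$, the element $\ft'_1 (\ft''_1)^{-1}$ equals the point-pushing map $P_y([\alpha_1])$; this identifies what ``extra'' structure $\ft''_1$ carries relative to $\ft'_1$, namely a sweep of the $P_m$-labeled boundary once around the first handle, which in the modular-functor dictionary is precisely the braiding/twist combination described above. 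Once this bookkeeping is done, the invocation of Lemma~\ref{NatHomotop} is essentially mechanical, and the same argument will in fact apply verbatim to $\ft'_i$ and $\ft''_i$ for any $i \le g$, which is needed in the subsequent reduction to the general case.
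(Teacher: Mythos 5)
Your overall strategy---compute the two Dehn twist actions, recognize that their difference is measured by a variance of the ribbon twist across the tensor product, and invoke Lemma~\ref{NatHomotop} to supply a cochain homotopy---is the right idea, and the geometric observation that $\ft'_1(\ft''_1)^{-1}$ is a point-push of $\alpha_1$ also appears implicitly in the paper's argument. However, your step two has a genuine gap that the paper has to work hard to close. You say the difference of the two cochain maps on $V^\bullet$ is precomposition with $\theta_{P_m \ot Y} - \id_{P_m} \ot \theta_Y$ for ``a suitable object $Y$ built from $X_n \ot \cdots \ot X_1$ and the tensor factors of~$L^{\ot g}$.'' But the ``tensor factors of $L^{\ot g}$'' are not actual objects of~$\CC$: the coend $L$ is a colimit over all $X^* \ot X$, so there is no fixed $Y$ to which Lemma~\ref{NatHomotop} can be applied directly, and the proposed formula does not type-check on $\Hom_\CC(P_m \ot Z, L^{\ot g})$ as written. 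The naturality of $h_m(\cdot)$ is needed not, as you say, to make the homotopy compatible with the differentials (that is the coboundary identity itself), but precisely to let the homotopy descend along the dinatural transformation $\iota_X$ defining the coend.

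The paper handles this by a detour you do not account for: it cuts the surface along a separating curve through the first handle, obtaining $\Sigma_{g-1,n+3}$, where the two boundary components to be reglued carry a genuine object label $X$ (respectively its duals). On this cut surface, $\fd_{\gamma'}$ and $\fd_{\gamma''}$ act simply by precomposition with $\id_{Y^*} \ot \theta_{X^{**}} \ot \id_{P_m \ot Z}$ and $\id_{Y^*} \ot \theta_{X^{**} \ot P_m} \ot \id_Z$ respectively, and Lemma~\ref{NatHomotop}---applied in the \emph{reversed} tensor category, since here $P_m$ sits to the \emph{right} of the object being twisted, the opposite of your formula---produces a homotopy $h_m(X^{**})$ natural in $X$. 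The remaining and essential work (part~(3) of the paper's proof) is then to transport this family through the handle-gluing map $(\iota_X \ot \id_{L^{\ot(g-1)}}) \circ -$: this uses that $\Hom_\CC(P_m \ot Z, - \ot L^{\ot(g-1)})$ is exact and hence preserves coends (Proposition~\ref{RightAdj}), and the comparison of coends under natural transformations (Lemma~\ref{LemCoendNat}) to descend the homotopy to $\Hom_\CC(P_\bullet \ot Z, L^{\ot g})$. Without this coend-transfer step, which is invisible in your sketch, the argument does not produce an honest cochain homotopy on the block complex. As a minor point, your closing remark that the argument applies verbatim to all $\ft_i$ is unnecessary: the paper only proves the $i=1$ case and then uses the change-of-coordinates principle to handle all other nonseparating generators inside Theorem~\ref{ThmDesc}.
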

\begin{proof}
\begin{parlist}
\item
If we cut the surface along the curve that is denoted by~$\sigma$ in the picture above, we obtain a surface that is diffeomorphic to~$\Sigma_{g-1,n+3}$:
\begin{center}
\fontsize{10}{10}\selectfont
\begin{tikzpicture}[scale=0.38]
\pic[scale=0.38] (lower) at (0,-pi) {handle}; 
\pic[rotate=82.5,scale=0.38] (lr) at (-7.5:pi) {handle};
\pic[rotate=147.5,scale=0.38] (tr) at (57.5:pi) {rightpart};
\pic[rotate=212.5,scale=0.38] (tl) at (122.5:pi) {handle};
\pic[rotate=295,scale=0.38] (ll) at (205:pi) {handle};
\fill[gray!10]  (lower-right) to[out=100,in=170] (lr-left)-- 
(lr-right) to (tr-left) to[out=50, in=255] (tr-k) to (tr-D)
to[out=90, in=40] (tr-right) 
to (tl-left)
-- (tl-right) to[out=310,in=10] (ll-left) 
--(ll-right) to (lower-left) -- cycle;
\draw[dotted] (lower-right) to[out=100,in=170] (lr-left);
\draw[dotted] (tl-right) to[out=310,in=10] (ll-left);

\node[rotate=68,scale=1.5] at (-6,1.8) {\dots};
\node[rotate=45,scale=1.5] at (4.2,-4.5) {\dots};

\draw[violet,fill=lightgray] (0,0) circle (10pt);
\node[] (v1) at (0,0) {$x$}; 

\draw[violet,rotate=-35,fill=lightgray] (-0.8,-3.1) circle (5pt) ;
\node[rotate=-35] at (-1.6,-2.7) {\dots};
\draw[violet,rotate=-35,fill=lightgray] (1,-3.1) circle (5pt) ;

\draw[rotate=147.5,black!10!red] 
(-0.1,-1) to[out=260,in=80] node[pos=0.85, right]{$\gamma'$} (-0.8,-4.5) to[out=260,in=150]  ($(tr-B)-(0.2,0)$);
\draw[rotate=147.5,black!10!red, dash pattern= on 2pt off 2pt] (-2.73,-4.7) to[bend right] coordinate[pos=1] (X) ($(tr-B)-(0.2,0)$);
\draw[rotate=147.5,black!10!red,decoration={ markings, mark=at position 0.7 with {\arrow{stealth}}}, postaction={decorate}]
(-0.5,-1) to[out=245,in=70] (-1.37,-3.5)  to[out=250,in=30] (-2.73,-4.7);
\draw[rotate=147.5,black!10!red] (-0.1,-1) to[out=80,in=10] (-0.24,-0.7) to[out=190,in=65] (-0.5,-1) ;

\draw[rotate=147.5,black!10!red] 
(0.5,0) to[out=255,in=80] (-0.6,-4.5) to[out=260,in=150] (0.1,-5.27);
\draw[rotate=147.5,black!10!red, dash pattern= on 2pt off 2pt] (-2.58,-4.45) to[bend right] (0.1,-5.27);
\draw[rotate=147.5,black!10!red]
(-0.5,0) to[out=255,in=70] (-1.6,-3.5)  to[out=250,in=30] (-2.58,-4.45);
\draw[rotate=147.5,black!10!red,decoration={ markings, mark=at position 0.8 with {\arrow{stealth}}}, postaction={decorate}] (0.5,0) to[out=75,in=10] node[pos=0.8, below right]{$\gamma''$} (0,0.6)  to[out=190,in=75] (-0.5,0) ;

\draw[fill=white](tr-D)to[out=250,in=0] (X) to[out=180,in=-10] ++(-0.3,0)
to[out=170,in=250] ++(-0.6,1) to[out=70,in=220] ++(0.6,1) to[out=40, in=120] (tr-a);

\coordinate[] (M) at ($(tr-D)!0.5!(tr-k)$);
\draw[violet, fill=lightgray, rotate=-13]  let \p1 = ($(tr-D) - (tr-k)$) in (M) circle [x radius= {0.5*veclen(\x1,\y1)}, y radius= 0.6cm];

\draw[violet,dash pattern= on 2pt off 2pt] (tr-a) to[bend left] (tr-b);
\end{tikzpicture}
\end{center}

Here, we consider the lower boundary component arising from the cut in the picture above as the~$(n+2)$nd one and the upper boundary component as the~$(n+3)$rd one.

The surface~$\Sigma_{g,n+1}$ can be reconstructed from the surface~$\Sigma_{g-1,n+3}$ by gluing in a handle, as described in Paragraph~\ref{ModFunct}. Upon gluing, the Dehn twists along the curves denoted by~$\gamma'$ and~$\gamma''$ in the second picture become the Dehn twists along the curves denoted by~$\alpha'_1$ and~$\alpha''_1$ in the first picture, respectively. As we saw in Paragraph~\ref{ModFunct}, the diagram
\begin{center}
\begin{tikzcd}
\TFT(\Sigma_{g-1,n+3}^{X_1, \dots, X_n, P_m, X^{**}, X^*}) \arrow{r} \arrow[d, "\TFT(\fd_{\gamma'})"']  & \TFT(\Sigma_{g,n+1}^{X_1, \dots, X_n, P_m}) \arrow[d, "\TFT(\ft'_1)"]\\
\TFT(\Sigma_{g-1,n+3}^{X_1, \dots, X_n, P_m,  X^{**}, X^*})
\arrow[r] & \TFT(\Sigma_{g,n+1}^{X_1, \dots, X_n, P_m})
\end{tikzcd}
\end{center}
commutes for our choices of the representatives~$\TFT(\fd_{\gamma'})$ and~$\TFT(\ft'_1)$ of the projective classes, because upon appropriate labeling~$\fd_{\gamma'}$ is~$\fd_{n+2}$ and~$\ft'_1$ is~$\ft_1$ in the notation used there. From the discussion in Paragraph~\ref{ModFunct}, we know that a similar diagram commutes for~$\fd_{\gamma''}$ and~$\ft''_1$. Our goal is to show that~$\TFT(\ft'_1)$ and~$\TFT(\ft''_1)$ induce the same map in cohomology.

\item
To see this, we apply Lemma~\ref{NatHomotop} to the reverse category, in which tensor products are taken in the opposite order (cf.~\cite[Examp.~2.5, p.~39]{JS}). In this way, we obtain for each object~$X \in \CC$ a chain homotopy
$h_m(X)\colon X\ot P_m \to X\ot P_{m+1} $ between the chain maps~$(\theta_{X\ot P_m })$ and~$(\theta_{X}\ot \id_{P_m})$ that is natural in~$X$. For two objects~$X$ and~$Y$ of~$\CC$, the Dehn twists~$\fd_{\gamma'}$ and~$\fd_{\gamma''}$ act on the space 
$\Hom_\CC(Y^* \ot X^{**} \ot P_m \ot X_n \ot \dots \ot X_1, L^{\ot (g-1)})$ by precomposition with~$\id_{Y^*} \ot \, \theta_{X^{**}} \ot \id_{P_m\ot Z} $ 
and~$ \id_{Y^*} \ot \, \theta_{X^{**} \ot P_m} \ot\id_{Z}$, respectively, where, for brevity, we have used the notation
$Z \deq X_n \ot \dots \ot X_1$. By naturality, the adjunction isomorphism
\[\Hom_\CC(Y^* \ot X^{**} \ot P_m \ot Z, L^{\ot (g-1)}) \to 
\Hom_\CC(P_m \ot Z, X^* \ot Y \ot L^{\ot (g-1)})\]
is an isomorphism of cochain complexes, so that we obtain a cochain map~$(f_{X,Y}'^m)$ of the cochain complex on the right-hand side whose defining property is that the diagram
\begin{center}
\begin{tikzcd}
\Hom_\CC(Y^* \ot X^{**} \ot P_m \ot Z, L^{\ot (g-1)}) \arrow{r}
\arrow[d, "= \, \circ (\id_{Y^*} \ot \, \theta_{X^{**}} \ot\id_{P_m\ot Z})","\TFT(\fd_{\gamma'})"']  & \Hom_\CC(P_m \ot Z, X^* \ot Y \ot L^{\ot (g-1)}) \arrow[d, "f_{X,Y}'^m"]\\
\Hom_\CC(Y^* \ot X^{**} \ot P_m\ot Z, L^{\ot (g-1)}) \arrow{r} & 
\Hom_\CC(P_m\ot Z, X^* \ot Y \ot L^{\ot (g-1)})
\end{tikzcd}
\end{center}
commutes. For~$\gamma''$, there is a second cochain map~$(f_{X,Y}''^m)$ that makes a very similar diagram commutative. For~$\gamma'$, the naturality of the adjunction isomorphism implies that~$f_{X,Y}'^m$ is given by postcomposition with~$\theta_{X^*} \ot \id_Y \ot \id_{L^{\ot (g-1)}}$. The chain homotopy~$(\id_{Y^*} \ot \, h_m(X^{**}) \ot \id_Z)$ induces a cochain homotopy
\[h'_m(X,Y) \colon \Hom_\CC(P_m \ot Z, X^* \ot Y \ot L^{\ot (g-1)}) \to 
\Hom_\CC(P_{m-1}\ot Z, X^* \ot Y \ot L^{\ot (g-1)})\]
that is natural in~$X$ and~$Y$ and makes the diagram
\begin{center}
\begin{tikzcd}
\Hom_\CC(Y^* \ot X^{**} \ot P_m \ot Z, L^{\ot (g-1)}) \arrow{r}
\arrow[d, "\circ (\id_{Y^*} \ot \, h_{m-1}(X^{**}) \ot \id_Z)"]  & 
\Hom_\CC(P_m\ot Z,  X^* \ot Y \ot L^{\ot (g-1)}) \arrow[d, "{h'_m(X,Y)}"]\\
\Hom_\CC(Y^* \ot X^{**} \ot P_{m-1} \ot Z, L^{\ot (g-1)}) \arrow{r} & 
\Hom_\CC(P_{m-1}\ot Z, X^* \ot Y \ot L^{\ot (g-1)})
\end{tikzcd}
\end{center} 
commutative.

\item
Now the functor~$\Hom_\CC(P_m\ot Z, -\ot L^{\ot (g-1)})$ is an exact functor from the category~$\CC$ to the category~$\V$ of finite-dimensional vector spaces. By Proposition~\ref{RightAdj}, it has a right adjoint and therefore preserves coends, as already mentioned in Paragraph~\ref{Coend}. Therefore, the family of morphisms
\[(\iota_X\ot \id_{L^{\ot (g-1)}}) \circ \colon 
\Hom_\CC(P_m \ot Z, X^* \ot X \ot L^{\ot (g-1)} ) 
\to \Hom_\CC(P_m\ot Z, L^{\ot g}) \]
is a coend for the bifunctor
\[\CC^{\op} \times \CC \to \V,~(X,Y) \mapsto 
\Hom_\CC(P_m \ot Z, X^* \ot Y \ot L^{\ot (g-1)}). \]
So we can apply Lemma~\ref{LemCoendNat} to this bifunctor and the corresponding bifunctor with~$m-1$ instead of~$m$ to obtain a $\K$-linear map
\[h'_m\colon \Hom_\CC(P_m\ot Z, L^{\ot g}) \to \Hom_\CC(P_{m-1}\ot Z, L^{\ot g})\]
that makes the diagram
\begin{center}
\begin{tikzcd}
\Hom_\CC(P_m\ot Z, X^*\ot X\ot L^{\ot (g-1)}) \arrow{r}
\arrow[d, "{h'_m(X,X)}"']  & 
\Hom_\CC(P_m\ot Z, L^{\ot g}) \arrow[d, "{h'_m}"]\\
\Hom_\CC(P_{m-1}\ot Z, X^*\ot X\ot L^{\ot (g-1)}) \arrow{r} & 
\Hom_\CC(P_{m-1}\ot Z, L^{\ot g})
\end{tikzcd}
\end{center}
commutative, i.e., satisfies
\[h'_m((\iota_X\ot \id_{L^{\ot (g-1)}}) \circ k) = 
(\iota_X\ot \id_{L^{\ot (g-1)}}) \circ h'_m(X,X)(k)\]
for all morphisms $k\colon P_m\ot Z \to X^*\ot X\ot L^{\ot (g-1)}$.

\item
By the definition of~$h_m(X^{**})$, we have
\[\theta_{X^{**} \ot P_m} - \theta_{X^{**}} \ot \id_{P_m} 
= (\id_{X^{**}} \ot d_{m+1}) \circ h_m(X^{**}) + h_{m-1}(X^{**}) \circ (\id_{X^{**}} \ot d_m).\]
For a morphism $k' \colon Y^* \ot X^{**} \ot P_m \ot Z \to L^{\ot (g-1)}$, this implies that
\begin{align*}
&k' \circ [(\id_{Y^*} \ot \theta_{X^{**} \ot P_m} \ot \id_Z) 
- (\id_{Y^*} \ot \theta_{X^{**}} \ot \id_{P_m \ot Z})]  \\
& \qquad = k' \circ  [(\id_{Y^* \ot X^{**}} \ot d_{m+1} \ot \id_Z) 
\circ (\id_{Y^*} \ot h_m(X^{**}) \ot \id_Z) \\
&\qquad \quad +  (\id_{Y^*} \ot h_{m-1}(X^{**}) \ot \id_Z) 
\circ (\id_{Y^* \ot X^{**}} \ot d_m \ot \id_{Z})].
\end{align*}

In view of the definition of~$h'_m(X,Y)$ and the naturality of the adjunction isomorphism, this yields
\begin{align*}
&f''^m_{X,Y}(k) - f'^m_{X,Y}(k) = 
h'_{m+1}(X,Y)(k \circ (d_{m+1}\ot \id_{Z})) + h'_{m}(X,Y)(k) \circ (d_{m}\ot \id_{Z})
\end{align*}
for all morphisms $k \colon P_m \ot Z \to X^* \ot Y \ot L^{\ot (g-1)}$. If we set $X=Y$ and compose with~$\iota_X \ot \id_{L^{\ot (g-1)}}$, this equation becomes
\begin{align*}
&\TFT(\ft''_1) \circ (\iota_X \ot \id_{L^{\ot (g-1)}}) \circ k 
- \TFT(\ft'_1) \circ (\iota_X \ot \id_{L^{\ot (g-1)}})\circ  k\\ 
&\;= h'_{m+1}((\iota_X \ot \id_{L^{\ot (g-1)}}) \circ k \circ (d_{m+1}\ot \id_{Z}))
+ h'_{m}((\iota_X \ot \id_{L^{\ot (g-1)}}) \circ k) \circ (d_m\ot \id_{Z}).
\end{align*}
Both sides of this equation define a dinatural transformation from the bifunctor
\[\CC^{\op} \times \CC \to \V,~(X,Y) \mapsto 
\Hom_\CC(P_m \ot Z, X^* \ot Y \ot L^{\ot (g-1)}) \]
already considered above to its coend $\Hom_\CC(P_m \ot Z, L^{\ot g})$. Our dinatural transformation factors over this coend, and the corresponding homomorphism, which is unique, can be read off directly from both the left and the right-hand side of the equation above. We get
\[\TFT(\ft''_1) \circ k'' - \TFT(\ft'_1) \circ k'' = 
h'_{m+1}(k'' \circ (d_{m+1}\ot \id_{Z})) + h'_m(k'') \circ (d_m\ot \id_{Z}) \]
for all~$k'' \in \Hom_\CC(P_m\ot Z, L^{\ot g})$. Therefore, the family $(h'_m)$ constitutes a cochain homotopy between the cochain maps induced by~$\TFT(\ft'_1)$ and~$\TFT(\ft''_1)$. When saying that, it should be noted that these maps are only determined up to a scalar; we have taken here for~$\TFT(\ft'_1)$ the representative that corresponds to the dinatural transformation 
$k \mapsto (\iota_X\ot \id_{L^{\ot (g-1)}}) \circ f'^m_{X,X}(k)$ via the universal property of the coend of our bifunctor, and have made a similar choice for the representative of~$\TFT(\ft''_1)$ using~$f''^m_{X,X}$.
\qedhere
\end{parlist}
\end{proof}

By putting these auxiliary results together, we can now associate with a mapping class in~$\Gamma_{g,n}$ a projective class of morphisms not only between the original block spaces, but rather between the derived block spaces. As already stated above, we do this by choosing a preimage under the epimorphism~$D_{n+1}$ defined at the end of Paragraph~\ref{Cap}. The key fact that we need to prove is therefore the following:
\begin{Theorem} \label{ThmDesc}
Suppose that $[\psi], [\psi'] \in \Gamma_{g,n+1}(n+1)$ satisfy 
$D_{n+1}([\psi]) = D_{n+1}([\psi'])$. Then we have $[\TFT^m(\psi)] = [\TFT^m(\psi')]$.
\end{Theorem}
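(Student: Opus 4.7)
The plan is to reduce the statement to showing that every element of $\Ker(D_{n+1})$ acts as the identity on derived block spaces up to scalar. Since $\TFT^m$ defines a projective representation of $\Gamma_{g,n+1}(n+1)$ — the natural transformations from Paragraph~\ref{MapClRep} restrict to cochain maps on the complex $\Hom_{\CC}(P_\bullet \ot X_n \ot \cdots \ot X_1, L^{\ot g})$ and therefore induce well-defined maps in cohomology, up to scalar — setting $\phi \deq \psi \psi'^{-1}$ the hypothesis becomes $[\phi] \in \Ker(D_{n+1})$, and the claim is equivalent to $[\TFT^m(\phi)] = [\id]$.

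Next, I decompose $\Ker(D_{n+1})$ using the factorization $D_{n+1} = F_y \circ C_{n+1}$. By Proposition~\ref{CapSeq}, $\Ker(C_{n+1}) = \langle \fd_{n+1}\rangle$, which acts trivially by Lemma~\ref{LemDehnTw1}. By the Birman sequence in Paragraph~\ref{Birm}, $\Ker(F_y) = \Img(P_y)$, so $\Ker(D_{n+1})$ is generated by $\fd_{n+1}$ together with preimages under $C_{n+1}$ of the elements $P_y([\gamma])$ as $[\gamma]$ runs over a generating set of $\pi_1(\Sigma_{g,n},y)$. By Paragraph~\ref{FundGroupGen}, the fundamental group is generated by the simple closed curves $\alpha_i$, $\beta_i$, $\delta_j$. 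The formula $P_y([\gamma]) = [\fd_{\gamma'}\fd_{\gamma''}^{-1}]$ from Paragraph~\ref{Birm}, combined with the observation that $\gamma'$ and $\gamma''$ avoid $y$ and so lift to simple closed curves in $\Sigma_{g,n+1}$ disjoint from the $(n+1)$-st boundary, provides $[\fd_{\gamma'}\fd_{\gamma''}^{-1}] \in \Gamma_{g,n+1}(n+1)$ as an explicit lift. It therefore remains to verify the projective equality $[\TFT^m(\fd_{\gamma'})] = [\TFT^m(\fd_{\gamma''})]$ for each generating curve $\gamma$.

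For $\gamma = \alpha_1$ this is exactly Lemma~\ref{LemFirstGen}. For $\gamma = \alpha_i$ with $i > 1$ and for $\gamma = \beta_i$, I would mimic the three-step strategy of that proof: cut $\Sigma_{g,n+1}$ along a separating curve isolating the relevant handle (or a slightly different curve inside the handle in the $\beta_i$ case), use the gluing property from Paragraph~\ref{ModFunct} to express $\fd_{\gamma'}$ and $\fd_{\gamma''}$ as images of the Dehn twists along the two boundary components produced by the cut, and then conclude from the naturality-in-$X$ of the chain homotopy $h_m(X)$ of Lemma~\ref{NatHomotop} (applied in $\CC$ or its reversed category) together with the universal property of the coend, exactly as in steps~(2)--(4) of the proof of Lemma~\ref{LemFirstGen}. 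For $\gamma = \delta_j$, the two push-offs are freely homotopic to $\partial_j$ and to a curve encircling the $j$-th and $(n+1)$-st boundary components, so the claim reduces to $[\TFT^m(\fd_j)] = [\TFT^m(\fd_{j,n+1})]$; this is Lemma~\ref{LemDehnTw2} when $j = n$ and follows by the same proof (applying Lemma~\ref{NatHomotop} at the $j$-th tensor factor rather than the $n$-th) in general.

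The main obstacle will be the geometric bookkeeping needed to extend Lemma~\ref{LemFirstGen} to all handle generators: for each $\alpha_i$ or $\beta_i$ one has to select a cutting curve whose effect on the underlying net is tractable and verify that the two boundary components produced by the cut carry precisely the Dehn twists required to invoke the naturality of the ribbon twist and the gluing isomorphism. Once this geometric setup is in place, the homological half of the argument is already contained in Lemmas~\ref{NatHomotop}, \ref{LemDehnTw1}, \ref{LemDehnTw2}, and~\ref{LemFirstGen}.
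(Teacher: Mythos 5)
Your outline matches the paper's architecture exactly in the first two stages: both reduce to showing that $\Ker(D_{n+1})$ acts trivially, both use the factorization $D_{n+1}=F_y\circ C_{n+1}$, both dispatch $\Ker(C_{n+1})=\langle\fd_{n+1}\rangle$ via Lemma~\ref{LemDehnTw1}, both reduce the Birman-sequence half of the kernel to a check on a generating set of $\pi_1(\Sigma_{g,n},x)$, and both anchor the base cases in Lemma~\ref{LemFirstGen} (for $\alpha_1$) and Lemma~\ref{LemDehnTw2} (for $\delta_n$). Where you diverge is in extending from those two base cases to the remaining generators, and here your plan has a concrete gap.

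For the other nonseparating generators $\alpha_i$ ($i>1$) and $\beta_i$, you propose to repeat the cutting-and-gluing argument of Lemma~\ref{LemFirstGen}. But that argument relies on a very specific feature of the gluing property in Paragraph~\ref{ModFunct}: when two boundary components of $\Sigma_{g-1,n+3}$ are identified, the curve along which they are glued is the meridian of the newly created handle, so the Dehn twist $\fd_{n+2}$ is carried to $\ft_1$ and acts by $\fT$ on a tensor factor of the coend. That mechanism is not available for $\beta_i$: no such gluing produces the longitude of a handle, so $\fr_i$ never arises as the image of a boundary Dehn twist under handle gluing. You would instead need to precompose with an auxiliary self-diffeomorphism of the surface carrying $\beta_i$ to the meridian of some handle before cutting --- which is precisely the change of coordinates principle that the paper invokes directly. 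The paper's route is shorter and cleaner: prove Lemma~\ref{LemFirstGen} once for $\alpha_1$, then observe that any other nonseparating simple closed curve is related to $\alpha_1$ by an orientation-preserving diffeomorphism $\varphi$ fixing $x$ (cf.\ \cite[Par.~1.3.1]{FM}), use $P_x([\varphi(\gamma)])=[\varphi]P_x([\gamma])[\varphi]^{-1}$ from Paragraph~\ref{Birm}, and let the conjugation by $[\TFT^m(\varphi)]$ cancel in cohomology.

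Your treatment of $\delta_j$ for $j<n$ is also not correct as stated. You claim that $[\TFT^m(\fd_j)]=[\TFT^m(\fd_{j,n+1})]$ ``follows by the same proof (applying Lemma~\ref{NatHomotop} at the $j$-th tensor factor).'' But the proof of Lemma~\ref{LemDehnTw2} exploits that $P_m$ and $X_n$ are adjacent tensor factors, so $\fd_{n,n+1}$ acts by precomposition with $\theta_{P_m\ot X_n}\ot\id$. For $j<n$, the $j$-th and $(n+1)$-st boundary components are not adjacent; the curve $\gamma'_{j,n+1}$ winds around the intermediate boundary components, and the corresponding action on $\Hom_\CC(P_m\ot X_n\ot\cdots\ot X_1,L^{\ot g})$ involves conjugating by braidings with $X_{j+1},\ldots,X_n$, not a straightforward ribbon twist in a single tensor block. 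Lemma~\ref{NatHomotop} in the form you need it is thus not directly applicable. Again the paper sidesteps this by a change-of-coordinates diffeomorphism with $\varphi(\delta_n)=\delta_j$, avoiding any analysis of the non-adjacent braidings.

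So: the homological core of your proposal is sound, and the reduction and two base lemmas are the correct ones. But the ``geometric bookkeeping'' you flag as the main obstacle is genuinely an obstacle --- not merely tedious, but in the $\beta_i$ and $\delta_j$ ($j<n$) cases the cutting argument as proposed does not close, and fixing it would effectively rediscover the change of coordinates principle that the paper uses as a black box.
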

\begin{proof}
\begin{parlist}
\item 
Clearly, $[\psi]$ and~$[\psi']$ differ by an element in the kernel of~$D_{n+1}$, which is in particular an element in the pure mapping class group~$\Pu\Gamma_{g,n+1}$. It therefore suffices to show that $[\TFT^m(\psi)] = [\id]$ for each mapping class~$[\psi]$ in the kernel of~$D_{n+1}$. Recall that~$D_{n+1}$ is the composition 
\[\Gamma_{g,n+1}(n+1) \overset{C_{n+1}}{\longrightarrow} \Gamma_{g,n}(y) \overset{F_y}{\longrightarrow} \Gamma_{g,n} \]
of the capping homomorphism, which arises from gluing a punctured disk in place of the missing disk, and the forgetful map~$F_y$ that, depending on the perspective, either fills this puncture or `forgets' that the point was marked. According to Proposition~\ref{CapSeq}, the kernel of~$C_{n+1}$ is generated by the Dehn twist~$\fd_{n+1}$. But by Lemma~\ref{LemDehnTw1} above, we have~$[\TFT^m(\fd_{n+1})] = [\id]$. This implies that the assignment
$[\psi] \mapsto [\TFT^m(\psi)]$ is well-defined for $[\psi] \in \Gamma_{g,n}(y)$, not only for
$[\psi] \in \Gamma_{g,n+1}(n+1)$. 

\item
To treat the second morphism~$F_y$ in the above composition, we use the Birman sequence from Paragraph~\ref{Birm}. In order to do this, we proceed as in Paragraph~\ref{Torus} and first replace the additional puncture~$y$ by the base point~$x$ of the fundamental group that comes from the polygon model of the surface as the identification of all the vertices of the polygon, except for those that correspond to the marked points on the boundary components. The map that forgets the base point~$x$ instead of the puncture will be denoted by~$F_x$ instead of~$F_y$. The Birman sequence then takes the form
\[\pi_1(\Sigma_{g,n},x) \overset{P_x}{\longrightarrow} \Gamma_{g,n}(x) \overset{F_x}{\longrightarrow} \Gamma_{g,n} \longrightarrow 1,\vspace{1mm}\]
where, as before,~$P_x$ denotes the pushing map. To complete the proof, we therefore have to show that $[\TFT^m(P_x([\gamma]))] = [\id]$ for all homotopy classes~$[\gamma] \in \pi_1(\Sigma_{g,n},x)$. Because~$P_x$ is a group antihomomorphism, it suffices to prove this for all~$[\gamma]$ in a generating set of the fundamental group. 

\vspace{2mm}

\item
As discussed in Paragraph~\ref{FundGroupGen}, the fundamental group~$\pi_1(\Sigma_{g,n},x)$ is generated by the homotopy classes of the simple closed curves $\alpha_1,\beta_1,\ldots,\alpha_g,\beta_g$, all of which are nonseparating, together with the curves~$\delta_1,\ldots,\delta_n$, all of which are separating. From a formula stated at the end of Paragraph~\ref{Birm}, we know that
$P_x([\alpha_1]) = [\fd_{\alpha'_1} \fd_{\alpha''_1}^{-1}]$. Therefore Lemma~\ref{LemFirstGen} yields that $[\TFT^m(P_x([\alpha_1]))] = [\id]$. 

\vspace{2mm}

\item
For any of the other generators that correspond to nonseparating curves, say~$\beta_i$, we use the change of coordinates principle (cf.~\cite[Par.~1.3.1, p.~37]{FM}) to obtain a diffeomorphism \mbox{$\varphi\colon \Sigma_{g,n} \to \Sigma_{g,n}$} satisfying~$\varphi(\alpha_1) = \beta_i$. The discussion in~\cite[loc.~cit.]{FM} shows that we can assume that~$\varphi$ is orientation-preserving and satisfies~$\varphi(x) = x$. From Paragraph~\ref{Birm}, we know that $[P_x([\beta_i])] = [\varphi] [P_x([\alpha_1])] [\varphi^{-1}]$. Now the compatibility with composition described in Paragraph~\ref{MapClRep} implies that
\begin{align*}
[\TFT^m(P_x([\beta_i]))] = [\TFT^m(\varphi)] [\TFT^m(P_x([\alpha_1]))] [\TFT^m(\varphi^{-1})]
= [\TFT^m(\varphi)] [\TFT^m(\varphi^{-1})] = [\id].
\end{align*}

\vspace{1mm}

\item
For the separating curves~$\delta_1,\ldots,\delta_n$, the formula already mentioned above yields that~$P_x([\delta_j]) = [\fd_{\delta'_j} \fd_{\delta''_j}^{-1}]$.
We have $C_{n+1}([\fd_n]) = [\fd_{\delta'_n}]$ and $C_{n+1}([\fd_{n,n+1}]) = [\fd_{\delta''_n}]$. Therefore, Lemma~\ref{LemDehnTw2} shows that
$[\TFT^m(P_x([\delta_n]))]  = [\id]$. If $j\neq n$, we proceed as above and use the general change of coordinate principle (cf.~\cite[loc.~cit.]{FM}) to obtain a diffeomorphism \mbox{$\varphi\colon \Sigma_{g,n} \to \Sigma_{g,n}$} with~$\varphi(\delta_n) = \delta_j$ that is orientation-preserving and satisfies~$\varphi(x) = x$. As above, we get
$[P_x([\delta_j])] = [\varphi] [P_x([\delta_n])] [\varphi^{-1}]$, and a very similar computation as there then yields $[\TFT^m(P_x([\delta_j]))] = [\id]$.
\qedhere
\end{parlist}
\end{proof}

By construction, a mapping class that permutes the boundary components does not in general carry a derived block space into itself. However, the pure mapping class group preserves these spaces:
\begin{Cor} \label{CorDesc}
There is a projective action of $\Pu\Gamma_{g,n}$ on
\[\TFT^m(\Sigma_{g,n}^{X_1,\ldots, X_n}) = \Ext^m(X_n \ot \dots \ot X_1, L^{\ot g})\]
that agrees with the original action on 
$\TFT(\Sigma_{g,n}^{X_1,\ldots, X_n}) = \Hom_\CC(X_n \ot \dots \ot X_1, L^{\ot g})$ if~$m=0$.
\end{Cor}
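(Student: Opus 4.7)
The plan is to transport the projective action of $\Gamma_{g,n+1}(n+1)$ on the derived block space down to $\Gamma_{g,n}$ via the homomorphism $D_{n+1}$ from Paragraph~\ref{Cap}, and then restrict to pure mapping classes. The key input is Theorem~\ref{ThmDesc}, which guarantees that the assignment $[\psi] \mapsto [\TFT^m(\psi)]$ on $\Gamma_{g,n+1}(n+1)$ descends through $D_{n+1}$.

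First I would verify that $D_{n+1} = F_y \circ C_{n+1}$ is surjective: $C_{n+1}$ surjects by the capping sequence of Proposition~\ref{CapSeq}, and $F_y$ surjects because any diffeomorphism of $\Sigma_{g,n}$ is isotopic to one fixing a chosen interior point. Given $[\psi] \in \Pu\Gamma_{g,n}$, I can therefore pick any lift $[\tilde\psi] \in \Gamma_{g,n+1}(n+1)$. Since $[\tilde\psi]$ fixes the $(n+1)$-th marked point and its permutation of the remaining $n$ marked points agrees with that of $D_{n+1}([\tilde\psi]) = [\psi]$, which is trivial, we obtain $p([\tilde\psi]) = \id \in S_{n+1}$. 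Hence $\TFT^m(\tilde\psi)$ is an endomorphism of $\Ext^m(X_n\ot\cdots\ot X_1, L^{\ot g})$ rather than an isomorphism to a permuted block space, and I define the action of $[\psi]$ to be the projective class $[\TFT^m(\tilde\psi)]$. Independence of the choice of lift is exactly Theorem~\ref{ThmDesc}; multiplicativity follows because $D_{n+1}$ is a group homomorphism (so the product of lifts lifts the product) together with the composition law $[\TFT(\phi\circ\psi)] = [\TFT(\phi)\circ\TFT(\psi)]$ of Paragraph~\ref{MapClRep}, which descends to cohomology.

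For the agreement at $m=0$, I would use the canonical isomorphism $\Hom_\CC(Z,L^{\ot g}) \cong \Ext^0(Z,L^{\ot g})$ with $Z \deq X_n\ot\cdots\ot X_1$, realized concretely by $g \mapsto g \circ (\xi \ot \id_Z)$: this map lands in $\ker(d_1^*)$ because $\xi \circ d_1 = 0$, and it is an isomorphism by left-exactness of $\Hom_\CC(-,L^{\ot g})$ together with the exactness of tensoring with $Z$ recalled in Paragraph~\ref{Fin}. Under this identification, I would verify that the action of a lift $[\tilde\psi]$ reduces to the original action $[\TFT(\psi)]$. Concretely, this can be checked generator by generator using the formulas of Paragraph~\ref{MapClRep}: substituting the augmentation $\xi\colon P_0 \to \1$ for the $(n+1)$-th tensor factor collapses $\theta_{P_0}$ to $\theta_\1 = \id_\1$ and any braiding or double braiding involving the $(n+1)$-th label to an identity, so that the precomposition operators implementing $\TFT(\tilde\psi)$ restrict, via $-\circ(\xi\ot\id_Z)$, to the analogous operators defining $\TFT(\psi)$, while the postcomposition operators on $L^{\ot g}$ (such as $\fT$, $\fS$, $\fN$) commute with precomposition automatically.

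The main technical obstacle is this last degree-zero identification: one must ensure that a specific representative of $[\TFT^m(\tilde\psi)]$ produced by the cochain-level construction restricts on the nose to $\TFT(\psi)$, not merely up to a nonzero scalar, so that the projective actions really coincide. This requires either a careful generator-level accounting or a more conceptual appeal to propagation of vacua, which identifies $\TFT(\Sigma_{g,n+1}^{X_1,\ldots,X_n,\1})$ with $\TFT(\Sigma_{g,n}^{X_1,\ldots,X_n})$ in a manner intertwining the action of any lift with that of the underlying mapping class, and then uses naturality of $\TFT$ in the $(n+1)$-th boundary label along the morphism $\xi\colon P_0 \to \1$ to bridge the two descriptions.
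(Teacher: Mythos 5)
Your proposal is correct and reconstructs the argument that the paper leaves implicit (the Corollary is stated without a separate proof, as an immediate consequence of Theorem~\ref{ThmDesc}): descend along $D_{n+1}$ using the theorem, observe that a lift of a pure class is again pure so that $\TFT^m(\tilde\psi)$ is an endomorphism of a single derived block space, get multiplicativity from $D_{n+1}$ being a homomorphism plus the composition law, and identify $m=0$ with the original action via the augmentation $\xi\colon P_0\to\1$ and naturality in the $(n+1)$-st label (together with $\theta_\1=\id_\1$ and triviality of braidings against $\1$). One small remark on your closing paragraph: there is no need for the representative of $[\TFT^0(\tilde\psi)]$ to restrict ``on the nose'' to $\TFT(\psi)$ rather than ``up to a nonzero scalar''---since the statement concerns projective actions, agreement of the two endomorphisms up to a nonzero scalar is exactly what is required, and this already follows from the naturality square along $\xi$ together with the generator-by-generator comparison you describe; the worry you flag is therefore not an actual obstacle.
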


As in Paragraph~\ref{MapClRep}, the entire mapping class group acts projectively on the direct sum
\[\bigoplus_{\tau \in S_n} \TFT^m(\Sigma_{g,n}^{X_{\tau(1)},\ldots, X_{\tau(n)}})\]
of derived block spaces. If all boundary components have the same label~$X$, the mapping class group indeed acts projectively on a single derived block space:
\begin{Cor} \label{CorEqual}
For~$X \in \CC$, there is a projective action of $\Gamma_{g,n}$ 
on~$\Ext^m(X^{\ot n}, L^{\ot g})$ that agrees with the original action on 
$\Hom_\CC(X^{\ot n}, L^{\ot g})$ if~$m=0$.
\end{Cor}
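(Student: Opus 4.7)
The plan is to reduce the statement to the previously established Corollary~\ref{CorDesc} together with Theorem~\ref{ThmDesc}, by lifting mapping classes from $\Gamma_{g,n}$ to $\Gamma_{g,n+1}(n+1)$ via the capping map $D_{n+1}$ and exploiting the fact that, under the assumption $X_1=\cdots=X_n=X$, any permutation of the first $n$ boundary labels leaves the block space unchanged. First I would observe that $D_{n+1}\colon \Gamma_{g,n+1}(n+1)\to\Gamma_{g,n}$ is surjective: by Paragraph~\ref{Cap} it is the composition of the surjection $C_{n+1}$ from the capping sequence in Proposition~\ref{CapSeq} and the forgetful map $F_y$, which is surjective as noted in Paragraph~\ref{Birm}. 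So every $[\phi]\in\Gamma_{g,n}$ admits at least one lift $[\tilde\phi]\in\Gamma_{g,n+1}(n+1)$.

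Next I would attach the projective resolution to the new boundary component, labeling it with $P_m$ and the remaining $n$ components with $X$. Since $[\tilde\phi]$ fixes the $(n+1)$-st boundary component pointwise, the induced permutation $\tau=p([\tilde\phi])^{-1}\in S_{n+1}$ satisfies $\tau(n+1)=n+1$ and agrees with $p([\phi])^{-1}$ on $\{1,\dots,n\}$. By the discussion in Paragraph~\ref{MappClassExt}, a representative $\TFT(\tilde\phi)$ gives a cochain homomorphism from $\Hom_\CC(P_\bullet\ot X^{\ot n},L^{\ot g})$ to $\Hom_\CC(P_\bullet\ot X_{\tau(n)}\ot\cdots\ot X_{\tau(1)},L^{\ot g})$; but with the assumption $X_i=X$ for all $i$, the latter complex equals the former. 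The induced map on $\Ext^m(X^{\ot n},L^{\ot g})$ is the candidate for $\TFT^m(\phi)$.

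The key step is to verify that the projective class of this map does not depend on the choice of lift. This is precisely Theorem~\ref{ThmDesc}: if $[\tilde\phi]$ and $[\tilde\phi']$ both project to $[\phi]$ under $D_{n+1}$, then $[\TFT^m(\tilde\phi)]=[\TFT^m(\tilde\phi')]$ as projective classes. I expect this to be the main conceptual point, but since Theorem~\ref{ThmDesc} has already been proved, it reduces to invoking it. Compatibility with composition then follows because $[\tilde\phi_1\tilde\phi_2]$ is a lift of $[\phi_1\phi_2]$ and lies again in $\Gamma_{g,n+1}(n+1)$, so the compatibility $[\TFT(\tilde\phi_1\tilde\phi_2)]=[\TFT(\tilde\phi_1)\circ\TFT(\tilde\phi_2)]$ from Paragraph~\ref{MapClRep} descends to cohomology.

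Finally, for the claim that in degree $m=0$ the new action agrees with the original one, I would use the augmentation $\xi\colon P_0\to\1$. Precomposition with $\xi\ot\id_{X^{\ot n}}$ identifies $\Hom_\CC(X^{\ot n},L^{\ot g})$ with the kernel of the first cochain differential in $\Hom_\CC(P_\bullet\ot X^{\ot n},L^{\ot g})$, and by naturality of $\TFT(\tilde\phi)$ in the boundary label (Paragraph~\ref{MapClRep}), the cochain map commutes with this inclusion; the gluing-with-$\1$ compatibility from Paragraph~\ref{ModFunct} then ensures the restriction coincides with the original $\TFT(\phi)$ on $\Hom_\CC(X^{\ot n},L^{\ot g})$. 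The only step that requires genuine care is checking that the restriction to the degree-zero piece matches the original action projectively; the simpler route is to note that this agreement has already been established for the pure mapping class group in Corollary~\ref{CorDesc}, and for a non-pure mapping class it follows by combining that statement with the canonical identification $X^{\ot n}=X_{\tau(n)}\ot\cdots\ot X_{\tau(1)}$.
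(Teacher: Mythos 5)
Your proposal is correct and follows the same route the paper (implicitly) takes: surjectivity of $D_{n+1}$, independence of the lift via Theorem~\ref{ThmDesc}, collapse of the permuted block spaces when all labels equal~$X$, and the degree-zero agreement via precomposition with~$\xi$. Two small points of hygiene: the fact you need for the $m=0$ step — that attaching a boundary labeled by~$\1$ and capping it off with a disk is compatible with the mapping class group action — is the ``propagation of vacua'' principle alluded to in the Introduction (and verifiable generator-by-generator from the explicit formulas in Paragraph~\ref{MapClRep}), not the handle-gluing compatibility of Paragraph~\ref{ModFunct}; and your closing appeal to Corollary~\ref{CorDesc} for the degree-zero agreement is circular, since that statement itself asserts the same agreement without a separate proof — your direct argument via~$\xi$ and naturality is the right one and should stand alone.
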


\subsection{The case of the sphere} \label{CaseSphere}
The construction of the mapping class group representations in the preceding paragraph was not based on generators and relations for the mapping class group, as such a presentation of the mapping class group is notoriously difficult. In the case~$g=0$, however, we gave such a presentation in Paragraph~\ref{Sphere}. It is instructive to see why the defining relations are satisfied in this case. In order to verify these relations, we will need a couple of lemmas. As stated in Paragraph~\ref{Fin}, we are assuming that our category is strict.
\begin{Lemma} \label{LemCaseSphere}
Suppose that $X_1,\ldots,X_n$ are objects of~$\CC$. Then we have
\begin{enumerate}
\item 
$\displaystyle
c_{X_1 \ot \dots \ot X_{n-1}, X_n} = (c_{X_1,X_n} \ot \id_{X_2 \ot \dots \ot X_{n-1}}) \circ 
(\id_{X_1} \ot \, c_{X_2,X_n} \ot \id_{X_3 \ot \dots \ot X_{n-1}})  $ \\
\phantom{x} $\displaystyle  
\mspace{120mu} \circ \cdots \circ (\id_{X_1 \ot \dots \ot X_{n-2}} \ot \, c_{X_{n-1},X_n})$

\item 
$\displaystyle
c_{X_1, X_2 \ot \dots \ot X_{n}} = (\id_{X_2 \ot \dots \ot X_{n-1}} \ot \, c_{X_1,X_{n}}) \circ \cdots 
\circ (\id_{X_2} \ot \, c_{X_1,X_3} \ot \id_{X_4 \ot \dots \ot X_{n}}) $ \\
\phantom{x} $\displaystyle
\mspace{105mu}
\circ (c_{X_1,X_2} \ot \id_{X_3 \ot \dots \ot X_{n}})$

\item 
$\displaystyle
c_{X_n, X_1 \ot \dots \ot X_{n-1}} \circ \cdots 
\circ c_{X_2, X_3 \ot \dots \ot X_n \ot X_{1}} \circ c_{X_1, X_2 \ot \dots \ot X_{n}} = 
\theta_{X_1 \ot \dots \ot X_{n}} \circ (\theta_{X_1}^{-1} \, \ot \dots \ot \, \theta_{X_n}^{-1})$

\item 
$\displaystyle
c_{X_2 \ot \dots \ot X_{n}, X_1} \circ
c_{X_3 \ot \dots \ot X_n \ot X_{1}, X_2} \circ \cdots \circ
c_{X_1 \ot \dots \ot X_{n-1}, X_n}     = 
\theta_{X_1 \ot \dots \ot X_{n}} \circ (\theta_{X_1}^{-1} \, \ot \dots \ot \, \theta_{X_n}^{-1})$
\end{enumerate}
\end{Lemma}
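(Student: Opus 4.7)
The plan is to handle the four parts in a uniform way, with (1) and (2) serving as essentially formal extensions of the hexagon axioms, and with (3) and (4) following from the ribbon axiom by induction, using (1) and (2) respectively as the combinatorial input. Throughout, I will use freely that the braiding is natural, so that any endomorphism $f \colon X \to X$ and $g \colon Y \to Y$ satisfies $c_{X,Y} \circ (f \otimes g) = (g \otimes f) \circ c_{X,Y}$.

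First I would prove (1) by induction on $n$. The case $n=2$ is vacuous, and $n=3$ is one of the hexagon axioms for the braiding in the form $c_{X \otimes Y, Z} = (c_{X,Z} \otimes \id_Y) \circ (\id_X \otimes c_{Y,Z})$. For the inductive step, I would apply this binary hexagon identity with $X = X_1 \otimes \dots \otimes X_{n-2}$, $Y = X_{n-1}$, and $Z = X_n$ to split off the innermost factor $c_{X_{n-1}, X_n}$, and then apply the inductive hypothesis to rewrite $c_{X_1 \otimes \dots \otimes X_{n-2}, X_n}$ as the remaining product. Part~(2) follows by the same argument, with the roles of left and right hexagons interchanged.

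For part~(3), I would proceed by induction on~$n$. Write $T_n$ for the left-hand side and abbreviate $P \deq X_1 \otimes \dots \otimes X_{n-1}$. The base case $n=2$ reduces to the ribbon axiom $\theta_{X_1 \otimes X_2} = (\theta_{X_1} \otimes \theta_{X_2}) \circ c_{X_2, X_1} \circ c_{X_1, X_2}$, combined with the observation that $\theta_{X_1} \otimes \theta_{X_2}$ commutes with $c_{X_2,X_1} \circ c_{X_1,X_2}$ by naturality of the braiding. For the inductive step, I would apply the ribbon axiom to $P \otimes X_n$ to obtain
\[
\theta_{P \otimes X_n} \circ (\theta_P^{-1} \otimes \theta_{X_n}^{-1})
= c_{X_n, P} \circ c_{P, X_n},
\]
insert the inductive hypothesis $\theta_P = T_{n-1} \circ (\theta_{X_1} \otimes \dots \otimes \theta_{X_{n-1}})$ on the left, and thereby reduce the claim $T_n = \theta_{X_1 \otimes \dots \otimes X_n} \circ (\theta_{X_1}^{-1} \otimes \dots \otimes \theta_{X_n}^{-1})$ to the purely braid-theoretic identity
\[
c_{P, X_n} \circ (T_{n-1} \otimes \id_{X_n})
= c_{X_{n-1}, X_n \otimes X_1 \otimes \dots \otimes X_{n-2}} \circ \cdots \circ c_{X_1, X_2 \otimes \dots \otimes X_n}.
\]
I would verify this last identity by expanding $c_{P, X_n}$ via part~(1) and repeatedly using naturality of the braiding to shuttle the factors of $T_{n-1}$ past the individual crossings $c_{X_i, X_n}$, matching term by term with the inductive form of the right-hand side after invoking part~(2) on each factor $c_{X_k, X_{k+1} \otimes \dots \otimes X_n \otimes X_1 \otimes \dots \otimes X_{k-1}}$.

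Finally, part~(4) is proved by the mirror of this induction, applying the ribbon axiom to $X_1 \otimes (X_2 \otimes \dots \otimes X_n)$ and using (2) in place of (1) at the analogous combinatorial step; alternatively, one can deduce (4) from (3) directly, since both sides coincide with $\theta_{X_1 \otimes \dots \otimes X_n} \circ (\theta_{X_1}^{-1} \otimes \dots \otimes \theta_{X_n}^{-1})$ once (3) has been established, and the equality of the two products of braidings is then automatic. The main obstacle is purely combinatorial: identifying the inductive braid expression $c_{P, X_n} \circ (T_{n-1} \otimes \id_{X_n})$ with the cyclic expression appearing in~(3), for which I expect the bookkeeping of naturality squares to be the only real work.
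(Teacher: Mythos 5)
Your proposals for (1) and the general strategy coincide with the paper's proof of (1), but the paths through (2)--(4) are genuinely different, and one step in your treatment of (4) is circular.

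For (2), you propose repeating the hexagon induction with the roles of left and right interchanged, which is fine but requires running the argument twice. The paper instead deduces (2) from (1) with no new induction, by passing to the opposite braiding $c'_{X,Y} \deq c^{-1}_{Y,X}$, applying (1) to~$c'$, taking inverses, and relabelling cyclically. This opposite-braiding device is then reused for (4), so the paper's choice pays off structurally.

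For (3), you induct on~$n$ and, after applying the ribbon axiom to $P \otimes X_n$ and the inductive hypothesis to $\theta_P$, isolate the braid identity
\[
c_{P, X_n} \circ (T_{n-1} \otimes \id_{X_n})
= c_{X_{n-1}, X_n \otimes X_1 \otimes \dots \otimes X_{n-2}} \circ \cdots \circ c_{X_1, X_2 \otimes \dots \otimes X_{n}}.
\]
This reduction is correct (I checked the typing and the case $n=3$, where the identity comes down to $\sigma_1\sigma_2\sigma_1\sigma_2=\sigma_1^2\sigma_2\sigma_1$, i.e., the braid relation). But note that verifying this identity genuinely requires the Yang--Baxter relation, not just naturality together with (1) and (2); in a braided category Yang--Baxter is a theorem (hexagons plus naturality, cf.~\cite[Thm.~XIII.1.3]{Ka}), so your appeal to ``bookkeeping of naturality squares'' tacitly invokes it, and you should say so explicitly. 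The paper organizes this differently: it formulates an auxiliary statement about the truncated product $c_{X_k, \cdots} \circ \cdots \circ c_{X_1, \cdots}$, parametrized by the splitting index $k$, and inducts on~$k$, invoking the Yang--Baxter equation exactly once per step together with the ribbon axiom. The $k=n-1$ case then yields (3) with one last application of the ribbon axiom. This keeps the braid combinatorics contained in a single controlled move per step, whereas your approach concentrates all the combinatorial work in the one ``big'' identity above. Both are valid; the paper's is easier to verify line by line.

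For (4), your primary route (mirror induction) is sound, but your ``alternative'' derivation is circular: you say both braid products coincide with $\theta_{X_1 \otimes \dots \otimes X_n} \circ (\theta_{X_1}^{-1} \otimes \dots \otimes \theta_{X_n}^{-1})$ once (3) is established, but (3) only identifies the product in~(3) with this expression; identifying the product in~(4) with it \emph{is} the content of~(4), so you cannot conclude ``the equality of the two products of braidings is then automatic.'' The paper avoids this by observing that $\theta_X^{-1}$ is a ribbon structure for the opposite braiding $c^{-1}_{Y,X}$, applying~(3) in that setting, and taking inverses; this transforms the product in~(3) into the product in~(4) in one step.
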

\begin{proof}
\begin{parlist}
\item
The first assertion is clearly correct for $n=2$ and then follows inductively from the equation
\begin{align*}
&c_{X_1 \ot \dots \ot X_{n-1}, X_n} = 
(c_{X_1 \ot \dots \ot X_{n-2}, X_{n}} \ot \id_{X_{n-1}}) \circ
(\id_{X_1 \ot \dots \ot X_{n-2}} \ot \, c_{X_{n-1},X_n}).
\end{align*}

\item
Associated with every braiding is another braiding in which $c_{X,Y}$ is replaced by~$c^{-1}_{Y,X}$. If we apply the first assertion to this braiding instead, take inverses, and permute~$X_1,\ldots,X_n$ cyclically, we obtain the second assertion. 

\item
To prove the third assertion, we need the auxiliary statement that
\begin{align*}
&c_{X_k, X_{k+1} \ot \dots \ot X_n \ot X_1 \ot \dots \ot X_{k-1}} \circ \cdots 
\circ c_{X_2, X_3 \ot \dots \ot X_n \ot X_{1}} \circ c_{X_1, X_2 \ot \dots \ot X_{n}} = \\ 
&c_{X_1 \ot \dots \ot X_{k}, X_{k+1} \ot \dots \ot X_{n}} \circ 
(\theta_{X_1 \ot \dots \ot X_{k}} \ot \id_{X_{k+1} \ot \dots \ot X_{n}}) \circ 
(\theta_{X_1}^{-1} \ot \dots \ot \theta_{X_k}^{-1} \ot \id_{X_{k+1} \ot \dots \ot X_{n}})
\end{align*}
for $k=1,\ldots,n-1$. For $k=1$, this is obvious. For $k \le n-2$, we have inductively
\begin{align*}
&c_{X_{k+1}, X_{k+2} \ot \dots \ot X_{n} \ot X_1 \ot \cdots \ot X_{k}} \circ c_{X_1 \ot \dots \ot X_{k}, X_{k+1} \ot \dots \ot X_{n}} \\ 
&\qquad\circ (\theta_{X_1 \ot \dots \ot X_{k}} \ot \id_{X_{k+1} \ot \dots \ot X_{n}}) \circ 
(\theta_{X_1}^{-1} \ot \dots \ot \theta_{X_k}^{-1} \ot \id_{X_{k+1} \ot \dots \ot X_{n}})  \\
&\quad= (\id_{X_{k+2} \ot \dots \ot X_{n}} \ot \, c_{X_{k+1}, X_1 \ot \cdots \ot X_{k}}) \circ 
(c_{X_{k+1}, X_{k+2} \ot \dots \ot X_{n}} \ot \id_{X_1 \ot \cdots \ot X_{k}}) \\
&\qquad\circ (\id_{X_{k+1}} \ot \, c_{X_1 \ot \cdots \ot X_{k}, X_{k+2} \ot \dots \ot X_{n}}) \circ 
(c_{X_1 \ot \cdots \ot X_{k}, X_{k+1}} \ot \id_{X_{k+2} \ot \dots \ot X_{n}}) \\
&\qquad\circ (\theta_{X_1 \ot \dots \ot X_{k}}  \ot \theta_{X_{k+1}} \ot \id_{X_{k+2} \ot \dots \ot X_{n}}) \circ 
(\theta_{X_1}^{-1} \ot \dots \ot \theta_{X_{k+1}}^{-1} 
\ot \id_{X_{k+2} \ot \dots \ot X_{n}}).
\end{align*}
By using the Yang-Baxter equation (cf.~\cite[Thm.~XIII.1.3, p.~317]{Ka}) on the first three terms, this expression becomes
\begin{align*}
&(c_{X_1 \ot \cdots \ot X_{k}, X_{k+2} \ot \dots \ot X_{n}} \ot \id_{X_{k+1}}) \circ 
(\id_{X_1 \ot \cdots \ot X_{k}} \ot \, c_{X_{k+1}, X_{k+2} \ot \dots \ot X_{n}}) \\
&\quad \circ (c_{X_{k+1}, X_1 \ot \cdots \ot X_{k}} \ot \id_{X_{k+2} \ot \dots \ot X_{n}}) \circ 
(c_{X_1 \ot \cdots \ot X_{k}, X_{k+1}} \ot \id_{X_{k+2} \ot \dots \ot X_{n}}) \\
&\quad\circ (\theta_{X_1 \ot \dots \ot X_{k}}  \ot \theta_{X_{k+1}} \ot \id_{X_{k+2} \ot \dots \ot X_{n}}) \circ 
(\theta_{X_1}^{-1} \ot \dots \ot \theta_{X_{k+1}}^{-1} 
\ot \id_{X_{k+2} \ot \dots \ot X_{n}}),
\end{align*}
which by the defining property of a ribbon structure is equal to
\begin{align*}
&(c_{X_1 \ot \cdots \ot X_{k}, X_{k+2} \ot \dots \ot X_{n}} \ot \id_{X_{k+1}}) \circ 
(\id_{X_1 \ot \cdots \ot X_{k}} \ot \, c_{X_{k+1}, X_{k+2} \ot \dots \ot X_{n}}) \\
&\qquad\circ (\theta_{X_1 \ot \dots \ot X_{k} \ot X_{k+1}} \ot \id_{X_{k+2} \ot \dots \ot X_{n}}) \circ 
(\theta_{X_1}^{-1} \ot \dots \ot \theta_{X_{k+1}}^{-1} 
\ot \id_{X_{k+2} \ot \dots \ot X_{n}}) \\
&\quad = c_{X_1 \ot \dots \ot X_{k+1}, X_{k+2} \ot \dots \ot X_{n}} \circ 
(\theta_{X_1 \ot \dots \ot X_{k+1}} \ot \id_{X_{k+2} \ot \dots \ot X_{n}}) \\
&\qquad\circ 
(\theta_{X_1}^{-1} \ot \dots \ot \theta_{X_{k+1}}^{-1} 
\ot \id_{X_{k+2} \ot \dots \ot X_{n}}),
\end{align*}
completing our inductive step.

\item
To derive the third assertion, we now use the case $k=n-1$ of the auxiliary statement above and apply
$c_{X_n, X_1 \ot \dots \ot X_{n-1}}$ to obtain
\begin{align*}
&c_{X_n, X_1 \ot \dots \ot X_{n-1}} \circ c_{X_{n-1}, X_n \ot X_1 \ot \dots \ot X_{n-2}} \circ\cdots 
\circ c_{X_2, X_3 \ot \dots \ot X_n \ot X_{1}} \circ c_{X_1, X_2 \ot \dots \ot X_{n}}\\
&\quad = c_{X_n, X_1 \ot \dots \ot X_{n-1}} \circ c_{X_1 \ot \dots \ot X_{n-1}, X_{n}} \circ 
(\theta_{X_1 \ot \dots \ot X_{n-1}} \ot \id_{X_{n}}) \\
&\qquad\circ 
(\theta_{X_1}^{-1} \ot \dots \ot \theta_{X_{n-1}}^{-1} \ot \id_{X_{n}}) \\
&\quad= c_{X_n, X_1 \ot \dots \ot X_{n-1}} \circ c_{X_1 \ot \dots \ot X_{n-1}, X_{n}} \circ 
(\theta_{X_1 \ot \dots \ot X_{n-1}} \ot \theta_{X_n}) \circ 
(\theta_{X_1}^{-1} \ot \dots \ot \theta_{X_n}^{-1}) \\
&\quad= \theta_{X_1 \ot \dots \ot X_{n}} \circ (\theta_{X_1}^{-1} \ot \dots \ot \theta_{X_n}^{-1})
\end{align*}
again by the defining property of a ribbon structure.

\item
A ribbon structure for the braiding~$c^{-1}_{Y,X}$ considered in the proof of the second assertion is given by the morphisms~$\theta_{X}^{-1} \colon X \to X$. If we apply the third assertion to this braiding and this ribbon structure instead and then take inverses, we obtain the fourth assertion. 
\qedhere
\end{parlist}
\end{proof}

Another lemma that we will need concerns precomposition with the twist:
\begin{Lemma} \label{TwistLem}
For all $f \in \Hom_\CC(X \ot Y, \1)$, we have
$f \circ (\theta_X \ot \id_Y) = f \circ (\id_X \ot \, \theta_Y)$.
More generally, precomposition with $\theta_X \ot \id_Y$ and precomposition with 
$\id_X \ot \, \theta_Y$ induce the same homomorphism on $\Ext^m(X \ot Y, \1)$.
\end{Lemma}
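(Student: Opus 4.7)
The plan is to reduce the general $\Ext^m$ statement to the degree-zero case, and to establish the degree-zero identity by duality.

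For the identity at the level of morphisms, I would rewrite $f \in \Hom_\CC(X \ot Y, \1)$ using the adjunction from Paragraph~\ref{Fin} as $f = \ev_Y \circ (\hat f \ot \id_Y)$ for a unique morphism $\hat f \colon X \to Y^*$. The key algebraic input is the standard duality identity $\ev_Y \circ (\theta_Y^* \ot \id_Y) = \ev_Y \circ (\id_{Y^*} \ot \theta_Y)$, which expresses the defining property of the transpose applied to the endomorphism $\theta_Y \colon Y \to Y$. Combining this with the ribbon axiom $\theta_{Y^*} = \theta_Y^*$ and the naturality relation $\theta_{Y^*} \circ \hat f = \hat f \circ \theta_X$, a short chain of substitutions starting from $f \circ (\id_X \ot \theta_Y) = \ev_Y \circ (\hat f \ot \theta_Y)$ and ending with $\ev_Y \circ (\hat f \ot \id_Y) \circ (\theta_X \ot \id_Y) = f \circ (\theta_X \ot \id_Y)$ yields the desired equality.

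For the assertion on $\Ext^m$, I would choose a projective resolution $P_\bullet \to X$ of $X$ and exploit the fact, recalled in Paragraph~\ref{Fin}, that the functor $- \ot Y$ is exact and sends projective objects to projective objects. Consequently $P_\bullet \ot Y \to X \ot Y$ is a projective resolution of $X \ot Y$, and may be used to compute $\Ext^m(X \ot Y, \1)$. By naturality of the twist, the family $(\theta_{P_m} \ot \id_Y)_m$ is a chain endomorphism of $P_\bullet \ot Y$ lifting $\theta_X \ot \id_Y$, while $(\id_{P_m} \ot \theta_Y)_m$ is trivially a chain endomorphism lifting $\id_X \ot \theta_Y$.

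Applying the first part of the lemma with $P_m$ in the role of $X$ yields, for every cochain $g \in \Hom_\CC(P_m \ot Y, \1)$, the equality $g \circ (\theta_{P_m} \ot \id_Y) = g \circ (\id_{P_m} \ot \theta_Y)$. Hence the two cochain endomorphisms of $\Hom_\CC(P_\bullet \ot Y, \1)$ obtained by precomposition with these lifts coincide termwise, and therefore induce the same map on cohomology. The only step requiring care is the choice of resolution: the tensor-product resolution $P_\bullet \ot Y$ is what makes the degree-zero identity applicable in each degree, thereby upgrading a potentially delicate statement about cohomology to an equality of cochain maps. I do not anticipate any serious obstacle beyond this.
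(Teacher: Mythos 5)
Your proof is correct and follows essentially the same approach as the paper's: part one uses the adjunction between $- \ot Y$ and $- \ot Y^*$ together with $\theta_{Y^*} = \theta_Y^*$ and the naturality of the twist, and part two lifts this to $\Ext^m$ by tensoring a projective resolution on the right with $Y$ and applying the degree-zero identity termwise. The only cosmetic differences are that the paper works with the adjunction map $\eta(f) = (f \ot \id_{Y^*}) \circ (\id_X \ot \coev_Y)$ rather than its inverse $\hat f \mapsto \ev_Y \circ (\hat f \ot \id_Y)$, and in part two resolves $X$ by tensoring a resolution of $\1$ with $X$ rather than taking an arbitrary projective resolution of $X$.
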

\begin{proof}
\begin{parlist}
\item
As we recorded in Paragraph~\ref{Fin}, the functor~$\textbf{--} \ot Y^*$ is the right adjoint of the functor~$\textbf{--} \ot Y$. The adjunction is given by
\[\eta\colon \Hom_\CC(X \ot Y, \1) \to \Hom_\CC(X, Y^*),~f \mapsto (f \ot \id_{Y^*}) \circ (\id_X \ot \coev_{Y}).\]
Therefore, the diagram
\vspace{-1.5mm}
\begin{center}
\begin{tikzcd}[column sep= large]
\Hom_\CC(X \ot Y, \1) \arrow[r, "\circ (\theta_X \ot \id_Y)"] \arrow[d, "\eta"'] & \Hom_\CC(X \ot Y, \1) \arrow[d, "\eta"] \\
\Hom_\CC(X, Y^*) \arrow[r, "\circ \theta_{X}"'] & \Hom_\CC(X, Y^*)
\end{tikzcd}
\end{center}
commutes. On the other hand, also the diagram
\begin{center}
\begin{tikzcd}[column sep= large]
\Hom_\CC(X \ot Y, \1) \arrow[r, "\circ (\id_X \ot \, \theta_Y)"] \arrow[d, "\eta"'] & \Hom_\CC(X \ot Y, \1) \arrow[d, "\eta"] \\
\Hom_\CC(X, Y^*) \arrow[r, "(\theta_{Y^*}) \circ"'] & \Hom_\CC(X, Y^*)
\end{tikzcd}
\end{center}
\vspace{-1.5mm}
commutes, because
\begin{align*}
\eta(f \circ (\id_X \ot \, \theta_Y)) 
&= (f \ot \id_{Y^*}) \circ (\id_X \ot \, \theta_Y \ot \id_{Y^*}) \circ (\id_X \ot \coev_{Y}) \\
&= (f \ot \id_{Y^*}) \circ (\id_X \ot \id_Y \ot \, \theta_{Y}^*) \circ (\id_X \ot \coev_{Y}) \\
&= (f \ot \, \theta_{Y}^*) \circ (\id_X \ot \coev_{Y}) 
= \theta_{Y}^* \circ (f \ot \id_{Y^*}) \circ (\id_X \ot \coev_{Y}) \\
&= \theta_{Y}^* \circ \eta(f) = \theta_{Y^*} \circ \eta(f).
\end{align*}
But by the naturality of the twist, we have $\theta_{Y^*} \circ g = g \circ \theta_X $ for 
$g \in \Hom_\CC(X, Y^*)$, so that the bottom rows in the two diagrams are equal. By comparing the two diagrams, we obtain the first assertion. 

\enlargethispage{2.8mm}

\item
From our projective resolution 
\[\1 \xlongleftarrow{} P_0 \xlongleftarrow{} P_1 \xlongleftarrow{} P_2 \xlongleftarrow{} \cdots\]
of the unit object, we get a projective resolution of~$X$ by tensoring with~$X$, i.e., by defining 
$Q_m \deq P_m\ot X$, and even a projective resolution of~$X \ot Y$ by tensoring again with~$Y$. Then
\begin{center}
\begin{tikzcd} {}
X \ot Y \arrow{d}{\id_X \ot \, \theta_Y} & Q_0\ot Y \arrow{l}[swap]{}\arrow{d}{\id_{Q_0} \ot \, \theta_{Y}} &
Q_1\ot Y \arrow{l}[swap]{} \arrow{d}{\id_{Q_1} \ot \, \theta_{Y}} & \cdots \arrow{l}[swap]{} \\
X \ot Y & Q_0\ot Y \arrow{l}[swap]{} & Q_1\ot Y  \arrow{l}[swap]{} & \cdots \arrow{l}[swap]{}
\end{tikzcd}
\vspace{1mm}
\end{center}
is a lift of $\id_X \ot \, \theta_Y$ to this projective resolution. On the other hand, a lift of 
$\theta_X \ot \id_Y$ is given by 
\begin{center}
\begin{tikzcd} {}
X \ot Y \arrow{d}{\theta_X \ot \id_Y} & Q_0\ot Y \arrow{l}[swap]{}\arrow{d}{\theta_{Q_0} \ot \id_{Y}} &
Q_1\ot Y \arrow{l}[swap]{} \arrow{d}{\theta_{Q_1} \ot \id_{Y}} & \cdots \arrow{l}[swap]{} \\
X \ot Y & Q_0\ot Y \arrow{l}[swap]{} & Q_1\ot Y  \arrow{l}[swap]{} & \cdots \arrow{l}[swap]{}
\end{tikzcd}
\vspace{1mm}
\end{center}
So the action of $\id_X \ot \, \theta_Y$ on $\Ext^m(X \ot Y, \1)$ is induced by precomposition with
\mbox{$\id_{Q_m} \ot \, \theta_{Y}$} on $\Hom_\CC(Q_m\ot Y, \1)$, while the action of $\theta_X \ot \id_Y$ is induced by precomposition with $\theta_{Q_m} \ot \id_{Y}$ on this space. But by the preceding step, these two precomposition maps are equal.
\qedhere
\end{parlist}
\end{proof}

The results of Paragraph~\ref{MappClassExt} yield in the case $g=0$ that the projective action of $\Gamma_{0,n+1}(n+1)$ on the direct sum
\[\bigoplus_{\tau \in S_n} \Ext^m(X_{\tau(n)} \ot \dots \ot X_{\tau(1)}, \1)\]
descends to a projective action of $\Gamma_{0,n}$.
In fact, the action is in this case not only projective, but rather an ordinary linear action. As we will explain now, we can see this explicitly from the presentations of these groups that we gave in Paragraph~\ref{Sphere}. According to Proposition~\ref{PropSphere}, we have
\[\Gamma_{0,n+1}(n+1) \cong \Z^{n} \rtimes B_{n}.\]
It is not difficult to check that the defining relations of $\Z^{n} \rtimes B_{n}$ discussed there are strictly, not only projectively, satisfied on the direct sum 
\[\bigoplus_{\tau \in S_n} \Hom_\CC(P_m \ot X_{\tau(n)} \ot \dots \ot X_{\tau(1)}, \1).\]
To show that the action descends to an action of $\Gamma_{0,n}$, we have to show that the additional relations
\[\fb_{1,2} \fb_{2,3} \cdots \fb_{n-1,n}^2 \cdots \fb_{2,3} \fb_{1,2} = \fd_1^{-2}\]
and $(\fb_{1,2} \fb_{2,3} \cdots \fb_{n-1,n})^n = \fd_1^{-1} \fd_2^{-1} \cdots \fd_n^{-1}$ hold on this space. It is sufficient to verify the relations on a single summand of the direct sum:
\begin{Proposition} \label{SphereProp}
If $f \in \Hom_\CC(P_m \ot X_n \ot \dots \ot X_1, \1)$ represents the cohomology class~$\bar{f}$, we have 
\[\TFT^m(\fb_{1,2} \fb_{2,3} \cdots \fb_{n-1,n}^2 \cdots \fb_{2,3} \fb_{1,2})(\bar{f}) 
= \TFT^m(\fd_1^{-2})(\bar{f}) \]
as well as
\[\TFT^m((\fb_{1,2} \fb_{2,3} \cdots \fb_{n-1,n})^n)(\bar{f}) = 
\TFT^m(\fd_1^{-1} \fd_2^{-1} \cdots \fd_n^{-1})(\bar{f}). \]
\end{Proposition}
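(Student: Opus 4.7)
The strategy is to unwind the chain-level action of each side of the two relations on the cochain complex $\Hom_{\CC}(P_\bullet \ot X_n \ot \cdots \ot X_1, \1)$ that computes $\Ext^m(X_n \ot \cdots \ot X_1, \1)$, and then verify that they induce the same map on cohomology by combining Lemma~\ref{LemCaseSphere} with Lemma~\ref{TwistLem}. Since all mapping classes involved lie in $\Gamma_{0,n+1}(n+1)$, and act via item~\ref{B} and item~\ref{D} of Paragraph~\ref{MapClRep}, the precomposition morphisms will always be of the form $\id_{P_m} \ot \Phi$ for some $\Phi \in \End_{\CC}(X_n \ot \cdots \ot X_1)$.

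For the second relation, tracing the composition $\fb_{1,2}\fb_{2,3}\cdots \fb_{n-1,n}$ from right to left and keeping track of how the labels at each position permute after each braiding, an induction on $n$ (using Lemma~\ref{LemCaseSphere}(1)) shows that this element acts by precomposition with $\id_{P_m} \ot c_{X_{n-1} \ot \cdots \ot X_1,\, X_n}$. Iterating this observation $n$ times and reindexing $Y_i \deq X_{n-i+1}$, the element $(\fb_{1,2}\cdots \fb_{n-1,n})^n$ acts by precomposition with $\id_{P_m}$ tensored with exactly the composition on the left-hand side of Lemma~\ref{LemCaseSphere}(4); this equals $\theta_{X_n \ot \cdots \ot X_1} \circ (\theta_{X_n}^{-1} \ot \cdots \ot \theta_{X_1}^{-1})$ on the chain level.

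The crucial simplification on $\Ext^m$ is that precomposition with $\id_{P_m} \ot \theta_{X_n \ot \cdots \ot X_1}$ induces the identity. Indeed, $\theta_\1 = \id_\1$, so naturality of the twist gives $f \circ \theta_{X_n \ot \cdots \ot X_1} = \theta_\1 \circ f = f$ already on the chain complex, and chain-homotopic lifts give the same map on $\Ext^m$ (this is precisely the $X=\1$ case of Lemma~\ref{TwistLem}, using Lemma~\ref{NatHomotop} to compare $\theta_{P_\bullet \ot X_n \ot \cdots \ot X_1}$ with $\id_{P_\bullet} \ot \theta_{X_n \ot \cdots \ot X_1}$). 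Combined with the contravariance of precomposition, the action of $(\fb_{1,2}\cdots \fb_{n-1,n})^n$ on $\Ext^m$ reduces to precomposition with $\id_{P_m} \ot (\theta_{X_n}^{-1} \ot \cdots \ot \theta_{X_1}^{-1})$, which by item~\ref{D} of Paragraph~\ref{MapClRep} is exactly $\TFT^m(\fd_1^{-1}\cdots \fd_n^{-1})$.

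For the first relation, the analogous reverse bookkeeping (using Lemma~\ref{LemCaseSphere}(2)) shows that $\fb_{n-1,n}\cdots \fb_{2,3}\fb_{1,2}$ acts by precomposition with $\id_{P_m} \ot c_{X_1,\, X_n \ot \cdots \ot X_2}$. Writing $\fb_{1,2}\cdots \fb_{n-1,n}^2\cdots \fb_{2,3}\fb_{1,2} = (\fb_{1,2}\cdots \fb_{n-1,n})(\fb_{n-1,n}\cdots \fb_{1,2})$ and setting $Y \deq X_n \ot \cdots \ot X_2$, the left-hand side therefore acts chain-level by precomposition with $\id_{P_m} \ot (c_{X_1, Y} \circ c_{Y, X_1})$. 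By the ribbon identity this equals $\id_{P_m} \ot (\theta_Y \ot \theta_{X_1})^{-1} \circ \theta_{Y \ot X_1}$; killing the outer twist $\theta_{Y \ot X_1}$ on $\Ext^m$ as above reduces to precomposition with $\id_{P_m} \ot \theta_Y^{-1} \ot \theta_{X_1}^{-1}$. Finally, one application of Lemma~\ref{TwistLem} with $X=Y$ and $Y=X_1$ converts precomposition with $\theta_Y^{-1} \ot \id_{X_1}$ into precomposition with $\id_Y \ot \theta_{X_1}^{-1}$ on $\Ext^m$, so the total action becomes precomposition with $\id_{P_m} \ot \id_Y \ot \theta_{X_1}^{-2}$, which by item~\ref{D} is $\TFT^m(\fd_1^{-2})$. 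The main non-routine step in all of this is the initial combinatorial identification of the chain-level morphisms of the iterated braid words with the precise compositions appearing in Lemma~\ref{LemCaseSphere}; once that matching is made, everything else is a mechanical chase through the ribbon axioms and Lemma~\ref{TwistLem}.
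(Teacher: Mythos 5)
Your proposal is correct and follows essentially the same route as the paper: identify the chain-level precomposition via Lemma~\ref{LemCaseSphere}, rewrite the resulting double braiding through the ribbon identity, kill the outer twist $\theta_{X_n \ot \cdots \ot X_1}$ on cohomology, and finally apply Lemma~\ref{TwistLem}(2) to redistribute the remaining twist factor. The one cosmetic difference is that you kill the outer twist by invoking Lemma~\ref{NatHomotop} together with naturality ($f \circ \theta_{P_m \ot X_n \ot \cdots \ot X_1} = f$), whereas the paper first applies Lemma~\ref{TwistLem}(1) to slide the twist onto $P_m$ and then cites Lemma~\ref{LemDehnTw1}; these two routes encapsulate the same chain-homotopy argument.
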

\begin{proof}
\begin{parlist}
\item 
It follows from the second assertion in Lemma~\ref{LemCaseSphere} that
\begin{align*}
&\TFT(\fb_{n-1,n} \cdots \fb_{2,3} \fb_{1,2})(f) \\
&\quad = f \circ (\id_{P_m \ot X_n \ot \dots \ot X_3} \ot \, c_{X_1,X_2}) 
\circ (\id_{P_m \ot X_n \ot \dots \ot X_4} \ot \, c_{X_1,X_3} \ot \id_{X_2}) \\
&\qquad \circ \dots \circ (\id_{P_m} \ot \, c_{X_1,X_n} \ot \id_{X_{n-1} \ot \dots \ot X_2})\\
&\quad = f \circ (\id_{P_m} \ot \, c_{X_1, X_n \ot \dots \ot X_2})
\end{align*}
and then from the first assertion there that
\begin{align*}
&\TFT(\fb_{1,2} \fb_{2,3} \cdots \fb_{n-1,n}^2 \cdots \fb_{2,3} \fb_{1,2})(f) \\
&\quad = f \circ (\id_{P_m} \ot \, c_{X_1, X_n \ot \dots \ot X_2}) 
\circ  (\id_{P_m} \ot \, c_{X_n, X_1} \ot \id_{X_{n-1} \ot \dots \ot X_2}) \\
&\qquad \circ \dots \circ (\id_{P_m \ot X_n \ot \dots \ot X_4} \ot \, c_{X_3, X_1} \ot \id_{X_2}) \circ (\id_{P_m \ot X_n \ot \dots \ot X_3} \ot \, c_{X_2, X_1}) \\
&\quad = f \circ (\id_{P_m} \ot \, c_{X_1, X_n \ot \dots \ot X_2}) \circ 
(\id_{P_m}\ot \, c_{X_n\ot\dots\ot X_2,X_1}) \\
&\quad =f \circ (\id_{P_m}\ot \, \theta_{X_n\ot \dots \ot X_1}) \circ 
(\id_{P_m}\ot \, \theta_{X_n\ot\dots\ot X_2}^{-1} \ot \, \theta_{X_1}^{-1}),
\end{align*}
while
$\TFT(\fd_1^{-2})(f) = f \circ 
(\id_{P_m} \ot \id_{X_n \ot \dots \ot X_2} \ot \, \theta_{X_1}^{-2})$. So, if we introduce the abbreviations $X \deq X_n \ot \dots \ot X_2$ and $Y \deq X_1$, we have to show that the two cochain maps
\[f \mapsto f \circ (\id_{P_m}\ot \, \theta_{X \ot Y}) \circ 
(\id_{P_m}\ot \, \theta_{X}^{-1} \ot \, \theta_{Y}^{-1} )\]
and 
$f \mapsto f \circ (\id_{P_m}\ot\id_X\ot \, \theta_{Y} ^{-2})$
of the cochain complex $(\Hom_\CC(P_m\ot X \ot Y, \1))$
induce the same homomorphism in cohomology. 

\vspace{2mm}

By the first assertion in Lemma~\ref{TwistLem}, the cochain map
$f \mapsto f \circ (\id_{P_m} \ot \, \theta_{X \ot Y} )$
is equal to the cochain map $f \mapsto f \circ (\theta_{P_m}\ot\id_{X \ot Y} )$, 
and we saw in Lemma~\ref{LemDehnTw1} that this cochain map is homotopic to the identity. This shows that our assertion will follow if we can show that the two cochain maps
$f \mapsto f \circ (\id_{P_m}\ot\theta_{X}^{-1} \ot \id_{Y} )$ and
$f \mapsto f \circ (\id_{P_m}\ot\id_{X} \ot \, \theta_{Y}^{-1} )$
induce the same map in cohomology. But this follows directly from the second assertion in Lemma~\ref{TwistLem} by taking inverses.

\item
To prove the second relation, we note that the first assertion in Lemma~\ref{LemCaseSphere} gives
\begin{align*}
&\TFT(\fb_{1,2} \fb_{2,3} \cdots \fb_{n-1,n})(f)\\
&\quad = f \circ (\id_{P_m} \ot \, c_{X_{n-1}, X_{n}} \ot \id_{X_{n-2} \ot \dots \ot X_1}) 
\circ \cdots  \\
&\qquad \circ  
(\id_{P_m} \ot \id_{X_{n-1} \ot \dots \ot X_3} \ot \, c_{X_2, X_{n}} \ot \id_{X_{1}})
\circ (\id_{P_m} \ot \id_{X_{n-1} \ot \dots \ot X_2} \ot \, c_{X_1, X_{n}})  \\
&\quad =f \circ (\id_{P_m}\ot \, c_{X_{n-1} \ot \dots \ot X_{1},X_n} ) 
\end{align*}
and therefore the fourth assertion of this lemma yields
\begin{align*}
&\TFT((\fb_{1,2} \fb_{2,3} \cdots \fb_{n-1,n})^n)(f) \\
&\quad = f \circ (\id_{P_m}\ot \, c_{X_{n-1} \ot \dots \ot X_{1},X_n})
\circ (\id_{P_m}\ot \, c_{X_{n-2} \ot \dots \ot X_{1}\ot X_n,X_{n-1}}) \\
&\qquad \circ \cdots \circ (\id_{P_m}\ot \, c_{X_n \ot \dots \ot X_{2},X_1} ) \\
&\quad = f \circ (\id_{P_m}\ot \, \theta_{X_n \ot \dots \ot X_{1}} ) \circ 
(\id_{P_m}\ot \, \theta_{X_n}^{-1} \ot \dots \ot \, \theta_{X_1}^{-1}).
\end{align*}
As in the case of the first relation, the cochain map 
$f \mapsto f \circ (\id_{P_m}\ot\theta_{X_n \ot \dots \ot X_{1}})$
is equal to the cochain map
$f \mapsto f \circ (\theta_{P_m}\ot\id_{X_n \ot \dots \ot X_{1}})$
by the first assertion in Lemma~\ref{TwistLem}, which is homotopic to the identity by Lemma~\ref{LemDehnTw1}. Therefore, our cochain map is homotopic to the cochain map
\begin{align*}
&\TFT(\fd_1^{-1} \fd_2^{-1} \cdots \fd_n^{-1})(f) = 
f \circ (\id_{P_m}\ot\theta_{X_n}^{-1} \ot \dots \ot \, \theta_{X_1}^{-1} )
\end{align*}
as required.
\qedhere
\end{parlist}
\end{proof}

\subsection{Hochschild cohomology} \label{Hochsch}
In order to explain why Corollary~\ref{CorEqual} generalizes the main result of our previous article (cf.~\cite{LMSS1}), we now specialize the situation to the case where~$\CC$ is the category of left modules over the factorizable ribbon Hopf algebra~$A$ described in Paragraph~\ref{Fact}, and furthermore to the case where~$g=1$ and~$n=0$. From Paragraph~\ref{Torus}, we know 
that~$\Gamma_{1,0} \cong \SL(2,\Z)$ and~$\Gamma_{1,1} \cong B_3$. From Corollary~\ref{CorEqual}, we then obtain a projective action of~$\Gamma_{1,0} \cong \SL(2,\Z)$ on~$\Ext^m_A(\K,L)$, which arises as a quotient of the projective action of~$\Gamma_{1,1} \cong B_3$ on~$\Hom_A(P_m,L)$ for a projective resolution
\[\K \xlongleftarrow{} P_0 \xlongleftarrow{} P_1 \xlongleftarrow{} P_2 \xlongleftarrow{} \cdots\]
of the base field~$\K$, considered as a trivial \mbox{$A$-module}, as explained in Paragraph~\ref{Fact}. That the projective action of~$\Gamma_{1,1}$ indeed descends to a projective action of~$\Gamma_{1,0}$ on the cohomology groups can be seen quite explicitly in this case: According to Paragraph~\ref{MapClRep}, the generators~$\fs_1$ and~$\ft_1$ of~$\Gamma_{1,1}$ act by postcomposition with~$\fS$ and~$\fT$, respectively, and we have seen in Paragraph~\ref{ActTorus} that they satisfy the identity 
\mbox{$\fS \circ \fT \circ \fS = \rho(v) \; \fT^{-1} \circ \fS \circ \fT^{-1}$}, which means that they satisfy the defining identity of~$\Gamma_{1,1} \cong B_3$ projectively. We have also seen there that the 2-chain relation is satisfied projectively, i.e., that the actions of $\fs_1^{4}$ and~$\fd_1^{-1}$ agree up to a scalar. It therefore follows from Lemma~\ref{LemDehnTw1} that~$\fs_1^{4}$ acts on the cohomology groups as a scalar multiple of the identity, so that the defining relations of the modular group are satisfied projectively.

In order to relate this result to Hochschild cohomology, we choose our projective resolution of the base field in a special way. We first choose a projective resolution
\[A \xlongleftarrow{} Q_0 \xlongleftarrow{} Q_1 \xlongleftarrow{} Q_2 \xlongleftarrow{} \cdots\]
of the algebra~$A$ in the category of left $A \ot A^{\op}$-modules, or equivalently the category of $A$-bimodules, where we require, as in~\cite[Chap.~IX, \S~3, p.~167]{CE}, that the left and the right action of~$A$ on a bimodule become equal when restricted to~$\K$. By \cite[Chap.~X, Thm.~2.1, p.~185]{CE}, we can then set $P_m \deq Q_m \ot_A \K$ to obtain a projective resolution of~$A \ot_A \K \cong \K$.

Now the left $A$-module~$L$ can be considered as an $A$-bimodule via the trivial right $A$-action, i.e., the action $\varphi.a \deq \varepsilon(a) \varphi$ for $\varphi \in L = A^*$. We denote~$L$ by~$L_\varepsilon$ if it is considered as an $A$-bimodule in this way. With the help of the cochain map
\[\Hom_{A \ot A^{\op}}(Q_m, L_\varepsilon) \to \Hom_A(Q_m \ot_A \K, L),~f \mapsto
(q \ot \lambda\mapsto \lambda f(q)) \]
with inverse
\[\Hom_A(Q_m \ot_A \K, L) \to \Hom_{A \ot A^{\op}}(Q_m, L_\varepsilon),~g \mapsto
(q \mapsto g(q \ot 1_\K)) \]
we see that the cochain complexes $\Hom_A(Q_m \ot_A \K, L)$ and 
$\Hom_{A \ot A^{\op}}(Q_m, L_\varepsilon)$ are isomorphic, so that also their cohomology groups
$\Ext_A^m(\K, L)$ and $\Ext^m_{A \ot A^{\op}}(A, L_\varepsilon)$ are isomorphic. But the latter cohomology groups are, by definition, the Hochschild cohomology groups~$HH^m(A, L_\varepsilon)$.

Via this isomorphism of cochain complexes, we can transfer the action of~$\Gamma_{1,1}$ to the Hochschild cochain groups of the bimodule~$L_\varepsilon$. The generators~$\fs_1$ and~$\ft_1$ of~$\Gamma_{1,1}$ then clearly still act by postcomposition with the bimodule homomorphisms~$\fS$ and~$\fT$, while the Dehn twist~$\fd_1$ acts by precomposition with the endomorphism~$q \mapsto v.q$ of~$Q_m$, or equivalently by postcomposition with the endomorphism~$\varphi \mapsto v.\varphi$ of~$L_\varepsilon$. From this correspondence, we see that our Lemma~\ref{LemDehnTw1} can be considered as an analogue of Proposition~1.2 in our previous treatment~\cite{LMSS1}.

While the two results are analogous, it is not yet clear that the projective representations of the modular group constructed here and in~\cite[Cor.~5.6, p.~419]{LMSS1} are isomorphic; in fact, the action constructed above is on the space~$HH^m(A, L_\varepsilon)$, while the action constructed in~\cite{LMSS1} is on the space~$HH^m(A, A)$. However, it turns out that the two actions are indeed isomorphic:

\begin{Proposition} 
The projective actions of~$\SL(2,\Z)$ on the spaces $HH^m(A, L_\varepsilon)$ and~$HH^m(A, A)$ are isomorphic.
\end{Proposition}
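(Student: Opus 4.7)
The strategy is to upgrade the Radford isomorphism $\bar\iota \colon A \to L$ from Lemma~\ref{LemEquiv}, which identifies $Z(A) = \HH^0(A, A)$ with $\bar C(A) = \HH^0(A, L_\varepsilon)$ in degree zero, to an isomorphism of cochain complexes in all degrees that intertwines the two $\SL(2,\Z)$-actions.

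First, I would rewrite both Hochschild cohomologies as $\Ext$-groups over $A$ alone. The identification $\HH^m(A, L_\varepsilon) \cong \Ext^m_A(\K, L)$ is already in the excerpt and uses that the right $A$-action on $L_\varepsilon$ factors through~$\varepsilon$. For $\HH^m(A, A)$ I would invoke the standard Hopf-algebraic reduction: for any $A$-bimodule~$M$, there is a natural isomorphism $\HH^m(A, M) \cong \Ext^m_A(\K, M^{\mathrm{ad}})$, where $M^{\mathrm{ad}}$ is $M$ regarded as a left $A$-module via the adjoint action $a \cdot m = a_{(1)} m S(a_{(2)})$ derived from the bimodule structure. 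This is obtained by combining a bar resolution of $A$ over $A \otimes A^{\op}$ with a tensor-hom adjunction, exactly in the spirit of the identification used in the excerpt for $L_\varepsilon$. Applied to $M = A$ with its standard bimodule structure, it yields $\HH^m(A, A) \cong \Ext^m_A(\K, A^{\mathrm{ad}})$, with $A^{\mathrm{ad}}$ carrying the standard left adjoint action. I would then construct a left $A$-module isomorphism $\tilde\iota \colon A^{\mathrm{ad}} \to L$ by noting that the inverse antipode $S^{-1}$ defines a left $A$-module isomorphism from $A^{\mathrm{ad}}$ to $A$ equipped with the coopposite adjoint action $a \cdot x = a_{(2)} x S^{-1}(a_{(1)})$ of Lemma~\ref{LemEquiv}, and composing with $\bar\iota$ to obtain $\tilde\iota \deq \bar\iota \circ S^{-1}$, explicitly $\tilde\iota(a)(x) = \rho(x S^{-1}(a))$.

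Postcomposition with $\tilde\iota$ then yields an isomorphism of cochain complexes $\Hom_A(P_\bullet, A^{\mathrm{ad}}) \to \Hom_A(P_\bullet, L)$ for any projective resolution $P_\bullet$ of $\K$, and hence an isomorphism $\HH^m(A, A) \cong \HH^m(A, L_\varepsilon)$. To verify $\SL(2,\Z)$-equivariance, I would use Lemma~\ref{LemEquiv}, which says that $\bar\iota$ intertwines $\hS$ with $\fS$ and $\hT$ with $\fT$ when $A$ carries the coopposite adjoint action. Conjugating by $S^{-1}$ transports this to $\tilde\iota$ intertwining the operators $S \circ \hS \circ S^{-1}$ and $S \circ \hT \circ S^{-1}$ with $\fS$ and $\fT$, respectively. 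Because $v$ is central and satisfies $S(v) = v$, the map $\hT$, which is multiplication by $v$, commutes with $S^{-1}$, so the action of $\hT$ transports as expected. For $\hS$, one has to show that $S \circ \hS \circ S^{-1}$ induces the same operator as $\hS$ itself on $\HH^m(A, A)$, which reduces to a compatibility identity between the monodromy matrix $Q$ and the antipode, derivable from the $S$-invariance $(S \otimes S)(R) = R$ of the $R$-matrix, the resulting equality $(S^2 \otimes S^2)(Q) = Q$, and the cyclicity of $\rho$ on $\bar C(A)$.

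The main obstacle is this last compatibility check for $\hS$ under the antipode twist: beyond reconciling the standard versus coopposite adjoint conventions, the crux is a careful manipulation of the identity $\rho(aQ_1)\,\rho(xQ_2) = \rho(Q_1 S^{-1}(a))\,\rho(S(Q_2) x)$ (or an equivalent), using the $S$-invariance of $R$ and the cyclicity properties of the right integral. Once this is resolved, the whole construction is structural, and the resulting $\SL(2,\Z)$-equivariant isomorphism $\HH^m(A, A) \cong \HH^m(A, L_\varepsilon)$ completes the proof.
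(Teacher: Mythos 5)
You take a genuinely different route from the paper, and while the route is plausible, it is incomplete as written. The paper stays entirely at the bimodule level: it introduces a twist $N \mapsto N'$ by pulling back along the ring homomorphism $a \ot b \mapsto S^{-1}(b) \ot S^{-1}(a)$, lifts the bimodule isomorphism $S\colon A' \to A$ to a chain map $(S_m)$ of projective resolutions, and obtains $\HH^m(A, N') \cong \HH^m(A, N)$ by pullback along $(S_m)$ combined with the set-theoretic equality $\Hom_{A \ot A^{\op}}(Q'_m, N') = \Hom_{A \ot A^{\op}}(Q_m, N)$. The crucial point, which you sidestep by compressing everything into a single map, is that this makes the equivariance completely structural: the generators $\fs,\ft$ act by postcomposition with $\hS,\hT$ on the \emph{coefficient} side, whereas the pullback along $(S_m)$ operates on the \emph{resolution} side, so the two commute for free, and no identity between the monodromy matrix and the antipode ever has to be established. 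After that, Lemma~\ref{LemEquiv} does the remaining work because $\bar\iota\colon N' \to L_\varepsilon$ is a bimodule isomorphism intertwining $\hS,\hT$ with $\fS,\fT$.

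Your approach, reducing immediately to $\Ext^\bullet_A(\K, -)$ and proposing a single $A$-module isomorphism $\tilde\iota = \bar\iota \circ S^{-1}\colon A^{\ad} \to L$, can in principle be made to work, but it leaves exactly the hard part undone. You flag the equivariance of $\fS$ under the antipode twist as the ``main obstacle'' without resolving it; and the identity you propose to verify is not quite the right one. Carefully tracking the chain of isomorphisms $\HH^m(A,A) \cong \HH^m(A, \prescript{}{\varepsilon}A_{\ad}) \cong \Ext^m_A(\K, (\prescript{}{\varepsilon}A_{\ad})^{\mathrm{ad}}) \cong \Ext^m_A(\K, A^{\ad})$ shows that the transported $\fs$-action on $\Ext^m_A(\K, A^{\ad})$ is postcomposition with $S^{-1}\circ \hS \circ S$, while your $\tilde\iota$ intertwines $S \circ \hS \circ S^{-1}$ with $\fS$; so the condition you actually need is that $\hS$ commutes with $S^2$, not that $S\hS S^{-1} = \hS$. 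The former does hold --- it follows from $(S^2 \ot S^2)(Q) = Q$ (a consequence of $(S \ot S)(R) = R$) together with $\rho \in \bar C(A)$ --- so your gap can be closed, but it is extra computational work that the paper's bimodule-level argument avoids. You would also need to check that your ``standard Hopf-algebraic reduction'' $\HH^m(A, A) \cong \Ext^m_A(\K, A^{\ad})$ is compatible with the explicit identification constructed in \cite{LMSS1}, since that is where the $\SL(2,\Z)$-action you must intertwine actually lives.
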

\begin{proof}
\begin{parlist}
\item 
In general, for an $A$-bimodule~$N$, we can modify the bimodule structure by defining the new left action as
\[a.n \deq n.S^{-1}(a)\]
and similarly the new right action as $n.a \deq S^{-1}(a).n$. We denote~$N$ by~$N'$ if we consider it endowed with this bimodule structure. From the point of view of bimodules as $A \ot A^{\op}$-modules, this operation is the pullback along the ring homomorphism
\[A \ot A^{\op} \to A \ot A^{\op},~a \ot b \mapsto S^{-1}(b) \ot S^{-1}(a).\]

\item
Now 
\[A' \xlongleftarrow{} Q'_0 \xlongleftarrow{} Q'_1 \xlongleftarrow{} Q'_2 \xlongleftarrow{} \cdots\]
is a projective resolution of~$A'$, and the cochain complex~$\Hom_{A \ot A^{\op}}(Q'_m, N')$ is not only isomorphic to the cochain complex~$\Hom_{A \ot A^{\op}}(Q_m, N)$, but even set-theoretically equal. We therefore have that $\Ext^m_{A \ot A^{\op}}(A', N') = \Ext^m_{A \ot A^{\op}}(A, N)$.

\item
The antipode yields a bimodule isomorphism $S\colon A' \to A$, which by the comparison theorem lifts to a chain map
\begin{center}
\begin{tikzcd} {}
A' \arrow{d}{S} & Q'_0 \arrow{l} \arrow{d}{S_{0}} & Q'_1 \arrow{l} \arrow{d}{S_{1}} & \cdots \arrow{l}\\
A & Q_0\arrow{l} & Q_1 \arrow{l} & \cdots \arrow{l}
\end{tikzcd}
\end{center}
By pulling back along the chain map~$(S_m)$, we obtain an isomorphism
\[\Ext^m_{A \ot A^{\op}}(A, N') \to 
\Ext^m_{A \ot A^{\op}}(A', N') = \Ext^m_{A \ot A^{\op}}(A, N).\]
In other words, we obtain an isomorphism between~$HH^m(A, N')$ and~$HH^m(A, N)$.

\item
We know from~\cite[Lem.~5.2, p.~417]{LMSS1} and the subsequent discussion that the projective action of~$\SL(2,\Z)$ on~$HH^m(A, A)$ is isomorphic to the one on the cohomology groups 
$HH^m(A, \prescript{}{\varepsilon} A_{\ad})$, where the generators~$\fs$ and~$\ft$ act by postcomposition with~$\hS$ and~$\hT$, respectively. Here, $\prescript{}{\varepsilon} A_{\ad}$ is the $A$-bimodule with~$A$ as underlying vector space, trivial left action $a.a' \deq \varepsilon(a) a'$, and right action
$\ad(a \ot a') = S(a'_{(1)}) a a'_{(2)}$ (cf.~\cite[Sec.~2, p.~406]{LMSS1}). We note that, as in the case of~$L$ discussed above, $\prescript{}{\varepsilon} A_{\ad}$ itself is not an $\SL(2,\Z)$-module, but only a $\Gamma_{1,1}$-module; however, the induced projective $\Gamma_{1,1}$-action on 
$HH^m(A, \prescript{}{\varepsilon} A_{\ad})$ descends to a projective $\SL(2,\Z)$-action.

If we set~$N = \prescript{}{\varepsilon} A_{\ad}$, we find that~$N'$ is the bimodule with underlying vector space~$A$, left action given by $a.a' = a_{(2)} a' S^{-1}(a_{(1)})$, and trivial right action. From the way how we constructed the isomorphism between~$HH^m(A, N')$ 
and~$HH^m(A, N)$ in the preceding step, we see that it becomes equivariant if we let the generators~$\fs$ and~$\ft$ also act on~$HH^m(A, N')$ by postcomposition with~$\hS$ and~$\hT$, respectively. But now Lemma~\ref{LemEquiv} shows that~$\bar{\iota}\colon N' \to L_{\varepsilon}$ is a bimodule isomorphism that is also equivariant under the projective $\Gamma_{1,1}$-action. Therefore postcomposition with~$\bar{\iota}$ yields an isomorphism between~$HH^m(A, N')$ and~$HH^m(A, L_{\varepsilon})$ that is $\SL(2,\Z)$-equivariant. Combining this with our isomorphism between~$HH^m(A, \prescript{}{\varepsilon} A_{\ad})$  and~$HH^m(A, N')$ already obtained, our assertion follows.
\qedhere
\end{parlist}
\end{proof}

In view of this proposition, we can indeed say that the mapping class group actions obtained here in Corollary~\ref{CorEqual} generalize the projective $\SL(2,\Z)$-representation on the Hochschild cohomology groups~$HH^m(A, A)$ obtained in~\cite[Cor.~5.6, p.~419]{LMSS1}.

\end{document}